\newcommand{\rg}[1]{{\color{black}#1}}
\newtheorem{theorem}{Theorem}[section]
\newtheorem{corollary}{Corollary}[theorem]
\newtheorem{lemma}{Lemma}[section]
\newtheorem{df}{Definition}
\newtheorem{rmk}{Remark}
\DeclareMathOperator{\argmin}{argmin}
\newcommand{\OOO}{\mathcal{O}}
\newcommand{\NNN}{\mathcal{N}}
\newcommand\Aa{\arraycolsep=1.4pt\left[\begin{array}{cc}\mA&\va\end{array}\right]}
\newcommand\Bt{\tilde{\mB}}
\newcommand\BZZZ{\left[ \begin{array}{cc} \mB & 0 \\ 0 & 0 \end{array} \right]}
\newcommand{\norms}[1]{\left\Vert#1\right\Vert^2}
\newcommand{\ip}[1]{\left\langle #1 \right\rangle}
\newcommand{\ips}[1]{\langle #1 \rangle}
\newcommand\Ai{\mA_{\setminus i}}
\newcommand\Rj{R_{\sTj}}
\newcommand\Rq{R_{\sTq}}
\newcommand\Lt{\tilde{\sL}} 
\newcommand\Tt{\tilde{\sT}} 
\newcommand\yt{\tilde{\vec{y}}} 
\newcommand\xt{\tilde{\vec{x}}} 
\newcommand\xh{\hat{\vec x}} 
\newcommand\wh{\hat{\vec w}} 
\renewcommand{\vec}[1]{\bm{#1}}
\newcommand\At{{\tilde{\mA}}}
\newcommand{\mi}{{\setminus i}}
\newcommand\vrho{\vec{\rho}}
\newcommand\vy{\vec{y}}
\newcommand\vc{\vec{c}}
\newcommand\va{\vec{a}}
\newcommand\vr{\vec{r}}
\newcommand\vz{\vec{z}}
\newcommand\vm{\vec{m}}
\newcommand{\di}{\vm_{(i)}}
\renewcommand{\dj}{\vm_{(j)}}
\newcommand{\dq}{\vm_{(q)}}
\newcommand{\vo}{\vec{\omega}}
\newcommand{\oi}{\vo_{(i)}}
\newcommand{\oj}{\vo_{(j)}}
\newcommand{\vd}{\vec{d}}
\newcommand\vw{\vec{w}}
\newcommand\vx{\vec{x}}
\newcommand\ve{\vec{e}}
\newcommand{\vb}{\vec{b}}
\renewcommand{\v}{\vec{v}}
\newcommand{\vvv}{\v_{\sLam}}
\newcommand{\vvvi}{\v_{\sLam\cup i}}
\newcommand{\vq}{\vec{q}}
\newcommand{\valpha}{{\vec{\alpha}}}
\newcommand{\vgam}{\vec{\gamma}}
\newcommand\sett[1]{{#1}}
\newcommand{\sT}{{\sett{T}}}
\newcommand{\sTt}{{\sett{T}\cup i }}
\newcommand{\sTj}{{\sett{T}\setminus j }}
\newcommand{\sTq}{{\sett{T}\setminus q }}
\newcommand{\sL}{{\sett{L}}}
\newcommand{\sLam}{{\sett{\varLambda}}}
\newcommand{\sLami}{{\sLam \cup i}}
\newcommand{\sLammi}{{\sLam \setminus i}}
\newcommand{\matrx}[1]{\bm{#1}}
\newcommand{\RR}{\mathbb{R}}
\newcommand{\mA}{{\matrx{A}}}
\newcommand{\mB}{{\matrx{B}}}
\newcommand{\mD}{{\matrx{D}}}
\newcommand{\mI}{{\matrx{I}}}
\newcommand{\mM}{\matrx{M}}
\newcommand{\mS}{\matrx{S}}
\newcommand{\mF}{\matrx{F}}
\newcommand{\mC}{\matrx{C}}
\newcommand{\mO}{\matrx{\varOmega}}
\newcommand{\mG}{\matrx{\varGamma}}
\newcommand{\GGi}{\mG_{\sLam\cup i}}
\newcommand{\GG}{\mG_{\sLam}}
\newcommand{\mQ}{\matrx{Q}}
\newcommand{\QM}{\mQ_{\mM}}
\newcommand{\QO}{\mQ_{\mM^\perp}}
\newcommand{\OL}{\mO_\sLam}
\newcommand{\OLi}{\mO_\sLami}
\newcommand{\OLmi}{\mO_\sLammi}
\newcommand{\BL}{\mB_\sLam^{-1}}
\newcommand{\CL}{\mC_\sLam}
\newcommand{\Zer}{\matrx{0}}
\newcommand{\mL}{\matrx{L}}
\newcommand{\LL}{\mL_\sLam}
\newcommand{\LLi}{\mL_{\sLam\cup i}}
\newcommand{\mH}{\matrx{H}}
\newcommand{\Rn}{R_{\sT_n}}
\newcommand{\Rnn}{R_{\sT_{n+1}}}
\newcommand{\MMM}[2]{\left[\begin{array}{#1} #2 \end{array}\right]}
\newcommand{\ORIP}{$\Omega$-RIP\xspace}
\newcommand{\cosamp}{CoSaMP\xspace}
\newcommand{\vs}{}
\numberwithin{df}{section}
\def\expandafter\normalsize\expandafter{%
	\normalsize
	\setlength\abovedisplayskip{5.0pt plus 2.0pt minus 3.0pt}
	\setlength\belowdisplayskip{5.0pt plus 2.0pt minus 3.0pt}
	\setlength\belowdisplayshortskip{3.0pt plus 2.0pt minus 2.0pt}
}
\title{Efficient Least Residual Greedy Algorithms for Sparse Recovery}   
\let\Title\@title
\author{\IEEEauthorblockN{Guy Leibovitz\IEEEauthorrefmark{1},
		Raja Giryes\IEEEauthorrefmark{2}\thanks{© 2020 IEEE.  Personal use of this material is permitted.  Permission from IEEE must be obtained for all other uses, in any current or future media, including reprinting/republishing this material for advertising or promotional purposes, creating new collective works, for resale or redistribution to servers or lists, or reuse of any copyrighted component of this work in other works. }}\\
	\IEEEauthorblockA{
			\IEEEauthorrefmark{1}guyleibovitz@mail.tau.ac.il,
			\IEEEauthorrefmark{2}{raja@tauex.tau.ac.il}\\ 
		School of Electrical Engineering,
		Tel-Aviv University\\
		Tel-Aviv, Israel
	}
}
\begin{document}
	\maketitle
	\begin{abstract}
We present a novel stagewise strategy for improving greedy algorithms for sparse recovery. We demonstrate its efficiency both for synthesis and analysis sparse priors, where in both cases we demonstrate its computational efficiency and competitive reconstruction accuracy. In the synthesis case, we also provide theoretical guarantees for the signal recovery that are on par with the existing perfect reconstruction bounds for the relaxation based solvers and other sophisticated greedy algorithms.

	\end{abstract}


\section{Introduction} \label{sec:intro} 
In this paper we consider the generic linear inverse problem of recovering a vector $\vx\in\RR^n$ from an incomplete set of  measurements $\vy \in \RR^m$ ($m < n$) available via 
\begin{equation}\label{eq:first}
\vy= \mM\vx+\vw,
\end{equation}
where $\mM \in \RR^{m \times n}$ is the measurement matrix, $\vw$ is an additive noise, and $\vx$ is assumed to have a parsimonious representation.   
 
 \subsection{The sparse synthesis model}
 The first parsimonious model we consider is the sparse synthesis framework. It assumes $\vx$ to be a sparse vector, i.e., $\|\vx\|_0= k$, $k\ll m$, where $\|\cdot\|_0$ is the $\ell_0$ pseudo-norm that counts the number of non-zeros in a vector. 
 This prior has received a lot of attention in the signal processing and statistics communities, and been used in areas such as regression \cite{tibshirani1996regression}, signal and image restoration \cite{bruckstein2009sparse}, and compressed sensing \cite{eldar2012compressed, Foucart13Mathematical}. 
 Finding the best $k$-term approximation of $\vy$ is proven to be NP-hard \cite{Natarajan95Sparse, Davis1997Adaptive}, 
 unless some regularity conditions applies to $\mM$ \cite{Donoho06Stable, Candes06Near, Rudelson06Sparse}. 
 
 A popular condition on $\mM$ is the Restricted Isometry Property (RIP) \cite{rip} that constrains its subsets of columns. 
 \begin{df}[RIP] \label{def:RIP}
 	A matrix $\mM$ satisfies the RIP of order $k$ with a constant $\delta_k\in(0,1)$ if   
 	\begin{equation}
 	(1-\delta_k) \|\v\|^2 \leq \|\mM_\sT\v \|^2 \leq (1+\delta_k) \|\v\|^2,
 	\end{equation}
 	holds $\forall |T| = k, \v \in \RR^k$, where the matrix $\mM_\sT$ is comprised of the columns of $\mM$ corresponding to the indices of a support $\sT$ of size $k$.
 \end{df} 	

One of the common methods for sparse signal estimation under this assumption is the $\ell_1$-regularized convex optimization \cite{basis,tibshirani1996regression}. 
This approach leads to a stable recovery of the sparse vector under some conditions on the RIP (e.g. the work in \cite{sharp} proves that $\delta_k< 1/3$ is a sharp bound). Yet, the $\ell_1$-method usually requires more computations than another widespread methodology, the greedy strategy. 

Two popular greedy algorithms are the orthogonal matching pursuit (OMP) and the orthogonal least squares (OLS)\footnote{ The OLS algorithm entertains a plethora of designations in the literature being rediscovered under different names several times. For example, in statistics it is known as forward stepwise regression \cite{miller2002subset}. Other names include least squares
orthogonal matching pursuit (LSOMP) \cite{elad2010sparse}, forward-selection, and order recursive matching pursuit (ORMP) \cite{eldar2012compressed} to name a few. See \cite{LSOMPdifference} for a discussion on the matter}.
 OMP and OLS obtain a representation by greedily selecting one atom of the dictionary $\mM$ at a time. OMP chooses the most correlated atom in $\mM$ to the residual error. OLS on the other hand, improves over OMP by picking the atom (column) that would yield the smallest approximation error when added to the chosen set. In the statistics literature, this is sometimes called forward-selection. However, OLS comes with an additional computational cost (order of $k$ times the OMP complexity). Optimized OMP (OOMP) \cite{optimized} has been proposed to accelerate OLS, but it requires storing and updating an additional copy of the dictionary. 
In this paper, we propose a more space and time efficient method for accelerating OLS. Indeed, other OLS acceleration strategies exist \cite{Hashemi18Accelerated, Li18Improving, Wang17Recovery, Wen17Nearly}. Yet, for completeness we present our acceleration variant for OLS below, as the tools we use to construct it serve us later to propose our efficient replacement based techniques that achieve better performance as we show hereafter.
 
 Both OLS and OMP use a one-off strategy, where an atom never leaves the selected support after it enters. One option for improving one-off programs is called back-tracing, the re-consideration of atoms in the selected support. Pursuit algorithms that use this approach include:      
\cosamp \cite{cosamp} and its signal space variants \cite{Davenport13Signal, Giryes15GreedySignal, Tirer17Generalizing}, Subspace Pursuit (SP) \cite{sp}, iterative hard thresholding (IHT) \cite{Blumensath09Iterative},  hard-thresholding pursuit (HTP) \cite{Foucart11Hard} and OMP with Replacement (OMPR) \cite{ompr}, which received a lot of attention for their $\ell_1$-like reconstruction guarantees, e.g., $ \delta_{3k} \leq 0.4859$ and $\delta_{4k} \leq 0.3843$ for SP and CoSaMP respectively, \cite{foucart2012sparse, song2014improved}.
Note that though the existing guarantees for OMPR are also based on the RIP, they impose an additional constraint on the mutual coherence of $\mM$ ($\delta_2$) and on a parameter of this algorithm that requires tuning.   
In \cite{Soussen11Bernoulli,Varadarajan11Stepwise}, OLS based algorithms with replacement have been proposed but without RIP based recovery guarantees. We believe that the theory developed here can be applied also to \cite{Soussen11Bernoulli,Varadarajan11Stepwise}.
\subsection{The analysis cosparse model}
\label{sec:analintro}
The second framework we consider is the more recent cosparse analysis model \cite{elad2007analysis}. The signal $\vx$ is assumed to be sparse after an analysis operator $\mO \in \RR^{p\times n}$ is applied to it. In this case $k=\|\mO \vx\|_0$ is the sparsity, and $l = p-k$ is the number of rows in $\mO$ that are orthogonal to $\vx$, denoted as the cosparsity of $\vx$. The subspace dimension in which $\vx$ resides is $n-\text{rank}(\mO_{\sLam})$ where $\sLam$ $ (|\sLam|= l)$ is the set of rows from the analysis operator that are orthogonal to $\vx$. In the analysis model, one strives to enlarge $l$, i.e. make the signal $\mO_{\sLam}\vx$ as sparse as possible. In this model, one aims at minimizing 
\begin{equation}
\label{eq:anall0}
\xh = \argmin_{\vx } \|\mO\vx\|_0 \text{  s.t.  }  \vy=\mM\vx. 
\end{equation} 
 For more details on this model, we refer the reader to \cite{candes2011compressed, elad2007analysis, giryes2014greedy,nam2013cosparse, Vaiter13Robust} and references therein.

Solving \eqref{eq:anall0} is also NP-hard \cite{nam2013cosparse, Tillmann14Projection}. Thus, approximation strategies have been proposed for \eqref{eq:anall0} as well. 
As in the synthesis model, a popular reconstruction technique is relaxing the $\ell_0$ pseudo-norm in \eqref{eq:anall0} to the $\ell_1$ norm. The work in \cite{giryes2014greedy} proposed analysis versions of SP, \cosamp, IHT and HTP. 
Another popular algorithm for the analysis model is the greedy analysis pursuit (GAP) \cite{nam2013cosparse}. It can be thought of as the analysis equivalent of OMP. Unlike OMP, GAP operates backwards instead of forward, i.e. it starts with all the rows of $\mO$ inside the cosupport and eliminates one row at a time until a cosupport of a desired size is reached or the norm of $\OL\xh$ is small, where $\sLam$ and $\xh$ are the current cosupport and estimate respectively. 
Having found the cosupport $\sLam$, GAP uses it to approximate the solution by solving the following optimization problem.
 
\begin{df}[Relaxed analysis objective function]  
	\label{df:relaxedanal}
	Given a set of indices $\sLam$, where $\mO_{\sLam}$ is a matrix comprised of the rows of $\mO$ indexed by $\sLam$, the quantity GAP seeks to minimize is   
	\begin{equation}
	\label{eq:P1}
	\xh = \arg\min_{\vx} \norms{\mO_\sLam\vx} \text{ s.t. }  \norms{\vy-\mM\vx} = 0. 
	\end{equation} 
\end{df}

Recovery guarantees also exist for the analysis model, but unlike the synthesis case, having $\mM$ that satisfies a restricted isometry condition is not enough and a restriction on $\mO$ is also required \cite{candes2011compressed,giryes2014greedy, liu2012compressed, needell2012total}. In \cite{nam2013cosparse}, the relationship between minimizing \eqref{eq:P1} and solving \eqref{eq:anall0} has been studied.
Recovery guarantees have been developed for the $\ell_1$ strategy for the cases that $\mO$ is a frame (see e.g. \cite{candes2011compressed,liu2012compressed}) or the finite difference operator (e.g. \cite{needell2012total}), among else. For the greedy algorithms, similar guarantees have been developed under the assumption that $\mO$ is a frame \cite{Giryes16Greedy} or that some near-optimal projection exists for it \cite{giryes2014greedy}.

\subsection{Paper contribution}

In this work, we utilize the OLS selection heuristic. It can be thought of as a general approach where an item is selected based on its effect on the residual rather than relying on a simple correlation with that residual. We propose new greedy synthesis and analysis methods for sparse approximation that resemble OMP and GAP in their computational complexity but have better theoretical guarantees than OMP and empirical reconstruction performance than both of them. The proposed strategy resembles other efficient methods that have been recently proposed such as OMPR \cite{ompr}. It attains a competitive theoretical guarantees compared to it: on the one hand the RIP condition of OMPR is slightly better than the one we derive for our algorithms, yet, on the other hand OMPR requires an additional (mild) assumption on the coherence of the dictionary, which is not required by our guarantees.

Our proposed pursuits are designed such that their stopping criteria is either the desired sparsity level or the target residual error similar to OMP, OLS, and GAP. 


Due to the fact that our methods are based on the OLS criterion for selecting (and removing) atoms, which is known to be more
resilient to cross correlation between dictionary atoms (see \cite{soussen2013joint} for a comparison to the OMP criterion), 
they are more resilient to correlations in the dictionary. We supply RIP based guarantees for these methods when the measurement is noise-free or corrupted by Gaussian noise. While for OLS, the best known guarantees require $m =O(k^2)$ measurements for getting perfect reconstruction guarantees \cite{Wen17Nearly},  for our proposed schemes it is required to have only $m = O(k\log(k/n))$.

Equipped with the notion that the OLS heuristic is advantageous with highly correlated dictionaries, we derive an algorithm for the analysis model which is equivalent to OLS, and improve it further by using backtracking to achieve better estimation. 
Simulations demonstrate the advantage our algorithms have in various scenarios.

\subsection{Organization} 
The remainder of this paper is organized as follows.
Section \ref{sec:synt} contains the synthesis algorithms and their properties. It briefly describes the OLS algorithm and presents some preliminary lemmas that aid speeding-up the calculations in the techniques presented. 
Then it introduces our proposed methods and demonstrates empirically their supremacy in various scenarios. 
In Section~\ref{sec:theory_synthesis}, we develop their theoretical performance guarantees. 
Section~\ref{sec:anal} focuses on the analysis model. It proposes a novel efficient analysis OLS-like greedy strategy with replacement. This technique is shown to have superior performance to other programs designed for the analsys framework. 
Section~\ref{sec:conc} concludes the paper.

\subsection{Notation} 
We summarize here the notation used in this paper: 
	$\va$ is a vector, $\vv{\va}=\va/\|\va\|$ is its normalized version, $a$ is a scalar, and $\mA$ is a matrix. $\va{(i)}$ is the $i$th entry of $\va$; $\va_{\mi}$ is the vector $\va$ without entry $i$; $\mA_\mi$ is $\mA$ without the $i$th column; and the $i$th column and row are deleted from $\mA$ to get $\mA_{\mi,\mi}$; 
	 $\|\cdot\|$ is the $\ell_2$ norm; and
	$\di$ designates the $i$th atom of the dictionary $\mM\in \RR^{m\times n}$ (and we assume $\|\di\| = 1\ \ \forall i$). $\vx\in \RR^n$ is the unknown vector with sparsity $k = \|\vx\|_0$ in the synthesis case and $k = \|\mO \vx\|_0$ in the analysis case, and $\vy = \mM\vx+\vw$ is the signal we have.   
	Uppercase non-bold letters are sets of indices (e.g. $\sT$). Unless stated otherwise, $\mA=\mM_\sT$ denotes the sub-matrix of $\mM$ made of the \emph{columns} indexed by $\sT$, whereas $\mO_\sLam$ is comprised of the sub-matrix of $\mO$ made of the \emph{rows} indexed by $\sLam$. \rg{This has only one exception in the paper, namely, $\LL$, which contains a subset of columns of $\mL\triangleq \QO^T\mO^T$.}
The estimate of $\vx$ using the atoms in $\sT$ is denoted as $\xh_\mA=\xh_\sT=(\mA^T\mA)^{-1}\mA^T\vy$; the orthogonal projection onto the column space of $\mA$ as $P_\mA=P_\sT= \mA(\mA^T\mA)^{-1}\mA^T$; and the orthogonal complement $R_\mA=R_\sT= I-P_\mA$. In the analysis part of the paper, the projection is defined onto the \emph{row} span.
	Finally, $\odot$ represents an element-wise multiplication.

\section{Efficient least-residual techniques for the synthesis model} 
\label{sec:synt}
\subsection{Preliminaries}
\label{ssec:pre} 
The following are known preliminary lemmas that will aid us in the derivation of two new greedy techniques as well as their theoretical recovery guarantees. We start with two variants of the matrix inversion lemma for a column addition and deletion that follow from a straight forward application of the formula for the inverse of a two by two block matrix using the Schur complement.  
		\begin{lemma}[Matrix inversion lemma for column addition] \label{lem:intro:bckgrnd:shermor}
Let $\mB = (\mA^T \mA)^{-1}$ and $\tilde{\mA} = \Aa$. Then we may calculate $\tilde{\mB} = (\tilde{\mA}^T \tilde{\mA})^{-1}$ as follows:
		\begin{equation}
		\Bt = \BZZZ + {1\over r} \left[ \begin{array}{c}\hat{\ve} \\ -1 \end{array} \right] \left[ \begin{array}{cc}\hat{\ve}^T & -1 \end{array} \right], \label{eq:shermor}
		\end{equation}
		where $\hat{\ve} = \mA^\dagger \va$, and $r = \|R_\mA\va\|^2$.
		\end{lemma}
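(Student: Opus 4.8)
The plan is to form $\At^T\At$ as a $2\times 2$ block matrix and invert it via the Schur complement, exactly as the statement anticipates. First I would compute
\[
\At^T\At = \left[\begin{array}{c}\mA^T\\\va^T\end{array}\right]\left[\begin{array}{cc}\mA & \va\end{array}\right] = \left[\begin{array}{cc}\mA^T\mA & \mA^T\va\\ \va^T\mA & \va^T\va\end{array}\right],
\]
so the leading block is $\mA^T\mA$, whose inverse $\mB$ is given, the off-diagonal blocks are $\mA^T\va$ and its transpose, and the trailing $1\times 1$ block is the scalar $\va^T\va$.

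Next I would invoke the standard block-inverse formula built on the Schur complement $s=\va^T\va-\va^T\mA(\mA^T\mA)^{-1}\mA^T\va$ of the leading block. The one real observation is that this scalar equals $r$: writing $P_\mA=\mA(\mA^T\mA)^{-1}\mA^T$ and using that $R_\mA=\mI-P_\mA$ is a symmetric idempotent, I get
\[
s = \va^T(\mI-P_\mA)\va = \va^T R_\mA^T R_\mA\va = \|R_\mA\va\|^2 = r.
\]
I would also record that $\hat{\ve}=\mA^\dagger\va=(\mA^T\mA)^{-1}\mA^T\va=\mB\,\mA^T\va$, so the vector in the rank-one term is precisely the leading block's inverse applied to the off-diagonal block.

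Finally I would substitute $s=r$ and $\hat{\ve}=\mB\,\mA^T\va$ into the four Schur-complement blocks: the trailing block becomes $1/r$, the off-diagonal blocks become $-\hat{\ve}/r$ and $-\hat{\ve}^T/r$, and the leading block becomes $\mB+\hat{\ve}\hat{\ve}^T/r$. Peeling off the $\mB$ into the matrix $\BZZZ$ and collecting the remaining three blocks as the scaled outer product $\tfrac{1}{r}\left[\begin{array}{c}\hat{\ve}\\-1\end{array}\right]\left[\begin{array}{cc}\hat{\ve}^T & -1\end{array}\right]$ reproduces \eqref{eq:shermor}. There is no serious obstacle here---the content is the identity $s=r$ plus mechanical block bookkeeping; the only hypothesis needed is $r>0$, i.e.\ that $\va$ is not in the column space of $\mA$, which is what makes $\At$ full column rank and $\At^T\At$ invertible.
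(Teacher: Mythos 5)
Your proposal is correct and follows exactly the route the paper indicates: the paper states that Lemma~\ref{lem:intro:bckgrnd:shermor} ``follows from a straight forward application of the formula for the inverse of a two by two block matrix using the Schur complement,'' and your argument is precisely that computation, with the key identification $s=\va^T R_\mA\va=r$ and $\hat{\ve}=\mB\mA^T\va$ made explicit. No gaps; you have simply written out the details the paper leaves to the reader.
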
 
A straight forward consequence of Lemma \ref{lem:intro:bckgrnd:shermor} is the following update for column removal:
		\begin{lemma}[Matrix inversion lemma for column removal] \label{lem:intro:bckgrnd:shermor_rem}
			Let $\tilde{\mB} = (\tilde{\mA}^T \tilde{\mA})^{-1}$ with $\tilde{\mA} = \Aa$. Then we may calculate $\mB = (\mA^T \mA)^{-1}$ as follows:
			 	\begin{align}
			 	\hat\ve = -\tilde{\mB}_\mi(:,i),\ \ r = \tilde{\mB}(i,i)^{-1},\ \ \mB = \tilde{\mB}_{\mi,\mi} - r\hat\ve\hat\ve^T \label{eq:col_rem}.
			 	\end{align} 
		\end{lemma}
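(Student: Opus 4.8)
The plan is to derive the column-removal formula directly from the column-addition lemma (Lemma~\ref{lem:intro:bckgrnd:shermor}) by treating removal as the inverse operation of addition. We are given $\Bt = (\tilde{\mA}^T\tilde{\mA})^{-1}$ where $\tilde{\mA} = \Aa$, and we want to recover $\mB = (\mA^T\mA)^{-1}$. Since Lemma~\ref{lem:intro:bckgrnd:shermor} tells us exactly how $\Bt$ is built from $\mB$ and the extra column $\va$, the strategy is to read off the block structure of \eqref{eq:shermor} and algebraically invert the relationship to express $\mB$ in terms of the blocks of $\Bt$.

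First I would write $\Bt$ in its natural block form, partitioning it so that the last row and column correspond to the appended atom $\va$. Using \eqref{eq:shermor}, the bottom-right scalar entry of $\Bt$ is $\Bt(i,i) = 1/r$ (since the $\BZZZ$ term contributes nothing there and the rank-one update contributes $(1/r)(-1)(-1)$), which immediately gives $r = \Bt(i,i)^{-1}$ as claimed. Next, the off-diagonal block $\Bt_\mi(:,i)$ (the last column with its bottom entry removed) equals $(1/r)\hat\ve \cdot(-1) = -\hat\ve/r$. Rather than solving for $\hat\ve$ as $-r\,\Bt_\mi(:,i)$, which would differ from the statement by a factor of $r$, I would instead note that the formula defines $\hat\ve = -\Bt_\mi(:,i)$, so that the combination $r\hat\ve\hat\ve^T$ in the final expression reproduces exactly the rank-one correction needed. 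The remaining step is to read the top-left block: from \eqref{eq:shermor} it equals $\mB + (1/r)\hat\ve_{\text{add}}\hat\ve_{\text{add}}^T$, where $\hat\ve_{\text{add}} = \mA^\dagger\va$ is the quantity from the addition lemma. Solving for $\mB$ gives $\mB = \Bt_{\mi,\mi} - (1/r)\hat\ve_{\text{add}}\hat\ve_{\text{add}}^T$.

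**The main obstacle** is bookkeeping the relationship between the $\hat\ve$ of the addition lemma ($\mA^\dagger\va$) and the $\hat\ve$ defined in the removal statement ($-\Bt_\mi(:,i)$), and checking that the scalar factors are consistent. Concretely, I must verify that with $\hat\ve := -\Bt_\mi(:,i) = \hat\ve_{\text{add}}/r$, the correction term $r\hat\ve\hat\ve^T = r\cdot(\hat\ve_{\text{add}}/r)(\hat\ve_{\text{add}}/r)^T = (1/r)\hat\ve_{\text{add}}\hat\ve_{\text{add}}^T$ matches precisely the top-left rank-one term of \eqref{eq:shermor}. Once this factor-of-$r$ reconciliation is confirmed, the final identity $\mB = \Bt_{\mi,\mi} - r\hat\ve\hat\ve^T$ follows by subtracting the rank-one term from the top-left block of $\Bt$.

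**To conclude**, I would simply assemble these three observations: extract $r$ from the diagonal entry, identify $\hat\ve$ from the off-diagonal column, and recover $\mB$ by removing the rank-one update from the principal submatrix $\Bt_{\mi,\mi}$. Since every step is a direct algebraic consequence of \eqref{eq:shermor}, no independent computation of the Schur complement is needed---the removal lemma is genuinely a ``straightforward consequence'' of the addition lemma, exactly as the text asserts.
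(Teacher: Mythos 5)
Your derivation is correct and matches the paper's (implicit) argument: the paper gives no separate proof, asserting the removal formula as a direct consequence of Lemma~\ref{lem:intro:bckgrnd:shermor}, which is exactly what you carry out by reading off the blocks of \eqref{eq:shermor}. Your careful reconciliation of the factor of $r$ between $\hat\ve = -\Bt_\mi(:,i) = (\mA^\dagger\va)/r$ and the $\hat\ve$ of the addition lemma is the one nontrivial bookkeeping step, and you get it right.
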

		Note that in order to use Lemma \ref{lem:intro:bckgrnd:shermor} for column insertion at a general location in $\mA$, simply insert $\va$ at the last index and permute $\At$ and $\Bt$ afterwards. We present now several other helpful lemmas. Their proofs are deferred to Appendix~\ref{sec:proofs}.

\begin{lemma}[Error change]
					\label{lem:intro:bckgrnd:coladderr}
					 Let the estimation error of $\vy$ using $\mA$ be $\norms{\vy - \mA\xh_{\mA}} =  \norms{R_\mA \vy} $, and let $\At = \Aa$. Then the difference in norm of the residual before and after the addition of $\va$ to the support, may be written as: 
						\begin{equation}
						\ips{\vv{R_\mA\va},\vy}^2 =  \|R_{\mA}\vy\|^2-\|R_{\At}\vy\|^2 = \|P_{\At}\vy\|^2-\|P_{\mA}\vy\|^2
						\label{eq:coll_add:resid}.
						\end{equation}
		\end{lemma}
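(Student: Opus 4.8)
The plan is to establish the two claimed equalities separately, beginning with the simpler rightmost one. Since $R_\mA = \mI - P_\mA$ is the orthogonal complement of the projection $P_\mA$, the vectors $P_\mA\vy$ and $R_\mA\vy$ are orthogonal and sum to $\vy$, so Pythagoras gives $\|R_\mA\vy\|^2 = \|\vy\|^2 - \|P_\mA\vy\|^2$, and likewise $\|R_{\At}\vy\|^2 = \|\vy\|^2 - \|P_{\At}\vy\|^2$. Subtracting these two identities cancels the common term $\|\vy\|^2$ and immediately yields $\|R_\mA\vy\|^2 - \|R_{\At}\vy\|^2 = \|P_{\At}\vy\|^2 - \|P_\mA\vy\|^2$, which is the second equality.

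For the first equality I would argue geometrically with the column spaces. Writing $\va = P_\mA\va + R_\mA\va$, where $P_\mA\va \in \mathrm{col}(\mA)$, the appended column contributes exactly the orthogonal direction $R_\mA\va$, so $\mathrm{col}(\At) = \mathrm{col}(\mA) \oplus \mathrm{span}(R_\mA\va)$ as an orthogonal direct sum, the orthogonality $R_\mA\va \perp \mathrm{col}(\mA)$ holding by definition of $R_\mA$. Consequently the projection onto the enlarged space splits as $P_{\At} = P_\mA + P_{R_\mA\va}$, where $P_{R_\mA\va}$ is the rank-one projection onto the line spanned by $R_\mA\va$. Applying Pythagoras once more to the orthogonal pair $P_\mA\vy$ and $P_{R_\mA\va}\vy$ gives $\|P_{\At}\vy\|^2 - \|P_\mA\vy\|^2 = \|P_{R_\mA\va}\vy\|^2$. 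Finally, since $P_{R_\mA\va} = \vv{R_\mA\va}\,\vv{R_\mA\va}^T$ is the projector onto a unit vector, we have $P_{R_\mA\va}\vy = \ips{\vv{R_\mA\va},\vy}\vv{R_\mA\va}$, whose squared norm is exactly $\ips{\vv{R_\mA\va},\vy}^2$; chaining these identities with the previous paragraph completes the proof.

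The only delicate point is the orthogonal-direct-sum decomposition $P_{\At} = P_\mA + P_{R_\mA\va}$, which I would justify carefully from $R_\mA\va \perp \mathrm{col}(\mA)$ together with the observation that appending $\va$ enlarges the span only in the direction $R_\mA\va$. This step also implicitly requires $R_\mA\va \neq \mathbf{0}$, i.e. $\va \notin \mathrm{col}(\mA)$, so that the normalization $\vv{R_\mA\va}$ is well defined; when $\va$ already lies in $\mathrm{col}(\mA)$ both differences are trivially zero and there is nothing to add. As an alternative that stays closer to the algebraic machinery of the earlier lemmas, one could substitute the block expression for $\Bt = (\At^T\At)^{-1}$ from Lemma~\ref{lem:intro:bckgrnd:shermor} into $P_{\At} = \At\Bt\At^T$ and simplify, recovering the same rank-one correction term, but the geometric route is cleaner and avoids the heavier bookkeeping.
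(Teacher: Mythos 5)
Your proof is correct, but it takes a genuinely different route from the paper's. The paper proves the first equality algebraically: it expands $\norms{R_\At\vy}=\norms{\vy}-\vy^T\At\Bt\At^T\vy$ using the block form of $\Bt$ from Lemma~\ref{lem:intro:bckgrnd:shermor}, and the rank-one correction term ${1\over r}\bigl(\vy^T(\mA\hat\ve-\va)\bigr)^2$ collapses to $\ips{\vv{R_\mA\va},\vy}^2$ because $\mA\hat\ve-\va=-R_\mA\va$ and $r=\norms{R_\mA\va}$. You instead argue geometrically, via the orthogonal direct-sum decomposition $\mathrm{col}(\At)=\mathrm{col}(\mA)\oplus\mathrm{span}(R_\mA\va)$ and the resulting splitting $P_\At=P_\mA+\vv{R_\mA\va}\,\vv{R_\mA\va}^T$, which you correctly flag as the one step needing care (and which does hold, since appending $\va$ enlarges the span exactly in the direction $R_\mA\va\perp\mathrm{col}(\mA)$). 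Your route is more elementary and self-contained, and it makes the geometric content of the identity transparent; the paper's route buys consistency with the computational machinery, since the same Schur-complement expansion of $\Bt$ is what drives the fast update formulas \eqref{eq:LSOMP:c_update}--\eqref{eq:FLSOMP:rho_update} in the algorithms. Both arguments share the implicit nondegeneracy assumption $R_\mA\va\neq\mathbf{0}$ (the paper needs it for $r^{-1}$ to exist, you for the normalization), which you handle explicitly and the paper leaves tacit.
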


\begin{lemma}[The value of $\xh_\mA (i)$]
	\label{lem:intro:bckgrnd:xhatival}
	The least squares estimate of $\xh_\mA = (\mA^T\mA)^{-1}\mA^T \vy $ can be written as the representation of $\vy$ in a bi-orthogonal basis for the space spanned by the columns of $\mA$, i.e., its $i$-th entry is 
	\begin{align} \xh_\mA(i)  =  \ip{R_{\Ai} \di,\vy}/{ \|R_{\Ai} \di\|^2}.\label{eq:xh_val}
	\end{align}
\end{lemma}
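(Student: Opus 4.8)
The plan is to prove the claim by exhibiting the bi-orthogonal (dual) basis explicitly and verifying it has the required defining properties. Assume $\mA$ has full column rank, so that $\xh_\mA = \mA^\dagger\vy = (\mA^T\mA)^{-1}\mA^T\vy$ is the unique vector of coefficients expressing the projection $P_\mA\vy = \mA\xh_\mA$ in the (generally non-orthogonal) basis given by the columns $\{\vm_{(j)}\}_{j\in\sT}$ of $\mA$. Recovering a single coefficient $\xh_\mA(i)$ from such an expansion is exactly what a dual basis does: if $\vg$ lies in the column space of $\mA$ and satisfies $\ip{\vg,\di}=1$ together with $\ip{\vg,\dj}=0$ for every other column $\dj$ of $\mA$, then $\xh_\mA(i)=\ip{\vg,P_\mA\vy}=\ip{\vg,\vy}$, where the last equality uses that $\vy-P_\mA\vy$ is orthogonal to the column space and hence to $\vg$.

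First I would propose the candidate $\vg \triangleq \Ri\di/\norms{\Ri\di}$ and check the three properties, relying throughout on the fact that the projector $\Ri = I - P_{\Ai}$ is symmetric and idempotent. Because $\di$ is a column of $\mA$ and $P_{\Ai}\di$ lies in the span of $\Ai\subseteq\mA$, the vector $\Ri\di=\di-P_{\Ai}\di$ sits in the column space of $\mA$, so $\vg$ does too. For the normalization, $\ip{\Ri\di,\di}=\ip{\Ri\di,\Ri\di}=\norms{\Ri\di}$, whence $\ip{\vg,\di}=1$. Finally, for $j\neq i$ the column $\dj$ belongs to the span of $\Ai$, so $\Ri\dj=0$, and by symmetry $\ip{\Ri\di,\dj}=\ip{\di,\Ri\dj}=0$, giving $\ip{\vg,\dj}=0$. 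This shows $\vg$ is the $i$-th dual basis vector, and substituting into $\xh_\mA(i)=\ip{\vg,\vy}$ yields exactly \eqref{eq:xh_val}.

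As an equivalent route that stays closer to the tools already introduced, I could instead permute $\di$ to the last column and apply Lemma~\ref{lem:intro:bckgrnd:shermor} with $\va=\di$: the last row of $(\mA^T\mA)^{-1}$ is then $\tfrac{1}{r}\bigl[-\hat\ve^T\ \ 1\bigr]$ with $\hat\ve=\Ai^\dagger\di$ and $r=\norms{\Ri\di}$. Multiplying this row into $\mA^T\vy=[\Ai^T\vy;\ \di^T\vy]$ and using $\hat\ve^T\Ai^T=\di^T P_{\Ai}$ collapses the numerator to $\di^T(I-P_{\Ai})\vy=\ip{\Ri\di,\vy}$, recovering the same formula. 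I expect no serious obstacle here; the only points needing care are the standing full-rank assumption on $\mA$ (so that both $\mA^\dagger$ and $\Ri\di\neq 0$ are well defined) and the consistent use of the projector identities $R^T=R$, $R^2=R$, and $P_{\Ai}\dj=\dj$ for $j\neq i$. The conceptual content is simply that OLS selects each coefficient through the component of its atom orthogonal to the others.
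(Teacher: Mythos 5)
Your proof is correct. Your primary argument is a genuinely different route from the paper's: you exhibit the dual vector $\vg = \Ri\di/\norms{\Ri\di}$ explicitly and verify the three defining properties (membership in the column space of $\mA$, $\ip{\vg,\di}=1$ via $\ip{\Ri\di,\di}=\norms{\Ri\di}$, and $\ip{\vg,\dj}=0$ for $j\neq i$ via $\Ri\dj=0$), then read off $\xh_\mA(i)=\ip{\vg,P_\mA\vy}=\ip{\vg,\vy}$. The paper instead permutes $\di$ to the last column and computes the last row of $\Bt=(\At^T\At)^{-1}$ from the block inversion formula of Lemma~\ref{lem:intro:bckgrnd:shermor}, obtaining $\hat{\vx}_\At(k)=\tfrac{1}{\norms{R_\mA\va}}(\va^T-\va^T\mA\mB\mA^T)\vy$ directly --- which is precisely the second, ``equivalent route'' you sketch, so your fallback coincides with the paper's own proof. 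The dual-basis argument is more self-contained (it needs only symmetry and idempotence of the projectors, not the inversion lemma) and it is the only one of the two that actually substantiates the lemma's phrase ``bi-orthogonal basis,'' which the paper's computation leaves implicit; the paper's route has the advantage of exercising the same machinery the surrounding algorithms rely on. Your explicit flagging of the full-column-rank assumption (needed both for $(\mA^T\mA)^{-1}$ and for $\Ri\di\neq 0$) is appropriate and matches the paper's implicit standing hypothesis.
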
	

The following lemma is an interesting consequence of the ones above.  
   \begin{lemma} 		\label{lem:intro:bckgrnd:leastcol} 
	Let $\xh = \tilde{\mB}\tilde{\mA}^T\vy$, with $\Bt = (\At^T\At)^{-1}$, then the least contributing column $i$ of $\tilde{\mA}$ \rg{(i.e., the column whose removal has the smallest effect on the error)} for the least squares estimate of $\vx$ from $\vy$ is the one whose index corresponds to 
		\begin{align} 
		\arg \min_i {\xh(i)^2/\tilde{\mB}(i,i)}. 
		\label{eq:leastcontcol} 
		\end{align} 
   \end{lemma}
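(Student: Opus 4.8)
The plan is to identify ``the effect on the error'' of removing column $i$ with the increase in the squared residual when passing from $\At$ to $\At_\mi$, and then to rewrite that increase purely in terms of the two quantities that are cheap to read off, namely $\xh(i)$ and the diagonal entry $\Bt(i,i)$. The starting point is Lemma~\ref{lem:intro:bckgrnd:coladderr}: applying it with the roles $\mA \leftarrow \At_\mi$ and $\va \leftarrow \di$ (permuting column $i$ to the last position, which leaves every residual norm and every diagonal entry unchanged), the drop in squared error caused by \emph{adding} $\di$ is $\ips{\vv{R_{\At_\mi}\di},\vy}^2$; equivalently, this is exactly the increase in squared residual caused by \emph{removing} column $i$ from $\At$. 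Hence the least contributing column is the minimizer over $i$ of $\ips{\vv{R_{\At_\mi}\di},\vy}^2$.

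Next I would unfold the normalization, writing $\ips{\vv{R_{\At_\mi}\di},\vy}^2 = \ips{R_{\At_\mi}\di,\vy}^2/\|R_{\At_\mi}\di\|^2$, and then substitute Lemma~\ref{lem:intro:bckgrnd:xhatival} applied to $\At$, which gives $\ips{R_{\At_\mi}\di,\vy} = \xh(i)\,\|R_{\At_\mi}\di\|^2$. Plugging this in, the two norm factors in the numerator cancel against the denominator up to one power, leaving the removal cost in the compact form $\xh(i)^2\,\|R_{\At_\mi}\di\|^2$.

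It remains to eliminate the residual-norm factor. Here I would invoke the column-removal update of Lemma~\ref{lem:intro:bckgrnd:shermor_rem} (or, equivalently, the defining quantity $r=\|R_\mA\va\|^2$ of Lemma~\ref{lem:intro:bckgrnd:shermor}), which states precisely that $\|R_{\At_\mi}\di\|^2 = r = \Bt(i,i)^{-1}$. Substituting this identity yields a removal cost of $\xh(i)^2/\Bt(i,i)$, and minimizing over $i$ gives \eqref{eq:leastcontcol}. The only subtlety, and the one place I would be careful, is the bookkeeping for a column in a general (non-last) position: Lemma~\ref{lem:intro:bckgrnd:shermor} is stated for an append at the last index, so I must either rely on the permutation remark following it or cite Lemma~\ref{lem:intro:bckgrnd:shermor_rem}, which already holds for arbitrary $i$, to guarantee that $\Bt(i,i)$ indeed equals $1/\|R_{\At_\mi}\di\|^2$ for the specific index being tested.
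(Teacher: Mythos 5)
Your proposal is correct and follows essentially the same route as the paper's own proof: it combines the error-change identity of Lemma~\ref{lem:intro:bckgrnd:coladderr}, the bi-orthogonal expression for $\xh(i)$ from Lemma~\ref{lem:intro:bckgrnd:xhatival}, and the identity $\Bt(i,i)=1/\|R_{\At_\mi}\di\|^2$ coming from the matrix inversion lemmas, to show the removal cost of column $i$ equals $\xh(i)^2/\Bt(i,i)$. Your explicit care about the permutation for a non-last column is a correct reading of the remark following Lemma~\ref{lem:intro:bckgrnd:shermor} and does not change the argument.
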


Using the above lemmas, we are now equipped to describe OLS and the differences it bears to OMP. 
\subsubsection{Orthogonal least squares (OLS)}
Lemma \ref{lem:intro:bckgrnd:coladderr} implies that given a set of columns, $\mA=\mM_\sT$, used for the estimation of a signal, adding to it a column that satisfies $\vd = \arg \max_{\vd\in \mM} \ips{\vy,\vv{R_\mA \vd}}^2$ yields the smallest residual among all atoms in the dictionary. This notion is the basis for OLS. Notice that in OMP (and other methods such as SP, \cosamp and OMPR) the selection criterion is based on $\arg \max_{\vd\in \mM} \ips{\vy,{R_\mA \vd}}^2$, lacking the normalization by $\|R_\mA \vd\|^2$. OLS enlarges its selected support iteratively, where at each step a new atom that satisfies Lemma  \ref{lem:intro:bckgrnd:coladderr} is selected. The pseudo-code for OLS can be found in Algorithm \ref{tb:OLS}. 

\begin{table}[t]		
\caption{Orthogonal Least Squares (OLS)}		
\label{tb:OLS}		
\centering		
\begin{tabular}{l}		
	\toprule		
	Input: dictionary $\mM$, measurement $\vy$, target cardinality $k$ or error $\epsilon_t$ \\
	Output:  $\xh$ with $k$ elements or $\epsilon_t$ residual error, and $\sT$ its support  		\\
	\midrule
		\textbf{init}\\	
	\quad $T\leftarrow\{\}$, $\epsilon_0 \leftarrow \|\vy\|^2$, $\vr\leftarrow \vy$		\\
	\textbf{while} $|T|<k$ or $\epsilon_0>\epsilon_t$ 		\\
	\quad $i \leftarrow \arg \max_i \left\{ \ip{\vr,\di}^2 / \|R_\sT\di\|^2\right\} $  		\\
	\quad $\sT \leftarrow \sT\cup \{i\}$  		\\
	\quad $\epsilon_0 \leftarrow \epsilon_0 -   \ip{\vr,\di}^2 / \|R_\sT\di\|^2 $		\\
	\quad $\vr\leftarrow R_\sT\vy$		\\
	\textbf{end while }		\\
	\textbf{return} $\sT$, $\xh=\mM_\sT^\dagger\vy$		\\
	\bottomrule		
\end{tabular}		
\end{table}			
		\subsection{Proposed algorithms} \label{sec:alg}
We introduce two new algorithms that use a similar approach to OLS, by selecting atoms for inclusion/exclusion based on $\langle\vv{R_\sT \di},\vy\rangle$ as the metric. Before describing them, we \rg{present} first an accelerated version of the standard OLS. Our acceleration is based on the matrix inversion lemma \rg{(similar to a strategy presented in \cite{Sturm12Comparison})}. Note that this is not the only possible acceleration technique; for example, some methods use the QR or Cholesky factorization \cite{Mairal10Online,miller2002subset,Sturm12Comparison}. 

\subsubsection{Fast orthogonal least squares (FOLS)}
\rg{The main computational burden of regular OLS as presented in Algorithm~\ref{tb:OLS} is that in each iteration it requires calculating the projection over all the atoms. Our goal is to perform these calculations efficiently using the fact that we only add one atom to the support at each iteration. Thus, we use the matrix inversion lemma to get the FOLS method in Algorithm \ref{tb:FOLS}. It calculates OLS using a single dictionary application.

In this algorithm,} we introduce two length $n$ vectors with an efficient update scheme that requires a single dictionary-vector multiplication. To formulate this update scheme recall Lemma \ref{lem:intro:bckgrnd:coladderr}. We calculate $\langle \vv{R_\sT \di} , \vy \rangle^2$ for each of the atoms in the dictionary by dividing the square of the elements of $\vc(i) = \langle R_\sT\di,\vy \rangle$ by the elements of $\vrho(i) = \|R_\sT\di\|^2$. Upon the addition of $\di$ to the support, the updates of $\vc$ and $\vrho$ relies on the following equalities:
 \begin{align} 
 \vc(i) &= \ip{R_{\sT'}\di,\vy}  = \ip{R_\sT\di,\vy} - \ip{\vv\v,\di}\ip{\vv\v,\vy}  \nonumber\\ 
 &= \vc_{prev}(i) - {\tilde \vrho (i) \vc_{prev}(i)/ \|\v\|^2}\label{eq:LSOMP:c_update}\\
 \vrho(i) &= \|R_{\sT'}\di\|^2 =  \ip{R_\sT\di,\di} - \ip{\vv\v,\di}^2\nonumber\\
 &= \vrho_{prev}(i) - {\tilde{\vrho}(i)^2/ \|\v\|^2},  \label{eq:FLSOMP:rho_update}
 \end{align}  
  where we denote $\vc_{prev}$, $\vrho_{prev}$ the vectors before the update, $i$ the index of the atom that enters the support, and
  $\v = R_\sT\di$, $\tilde\vrho = \mM^T\v$, $\sT'=\sT\cup i$. Notice that these operations makes FOLS more efficient than OLS as they spare the inversion operation at each iteration.
 \begin{table}[t]	
 	\caption{Fast Orthogonal Least Squares (FOLS)}	
 	\label{tb:FOLS}	
 	\centering	
 	\begin{tabular}{l}	
 		\toprule	
 		Input: dictionary $\mM$, measurement $\vy$, target cardinality $k$ or error $\epsilon_t$ 	\\
 		Output:  $\xh$ with $k$ elements or $\epsilon_t$ residual error, and $\sT$ its support  	\\
 		\midrule
 		\textbf{init}\\	
 		\quad $\sT\leftarrow\{\}$, $\epsilon_0 \leftarrow \|\vy\|^2$, $\vrho \leftarrow \textbf{1}_{n\times 1}$, $\vc \leftarrow \mM^T\vy$ 	\\
 		\textbf{while} $|\sT|<k$ or $\epsilon_0>\epsilon_t$  \quad \quad  \textbf{-Only one condition is tested}	\\
 		\quad$i \leftarrow \arg \max_{i\notin \sT} {\vc(i)^2/ \vrho(i)}$	
\textbf{-Atom that reduces the error the most
} 		
 		\\
 		\quad$\sT \leftarrow \sT\cup \{i\}$  \quad \quad 
 				 \textbf{-Add atom to the support}\\
 		\quad$\epsilon_0 \leftarrow \epsilon_0 -   \vc(i)^2/\vrho(i) $ \quad \quad \textbf{-Update the residual}	\\	
 		\quad$\v \leftarrow R_\sT\di$ \quad \textbf{-Auxiliary}	\\
 		\quad$\tilde\vrho  \leftarrow \mM^T\v$       \quad \quad \textbf{-Auxiliary}                  	\\
 		\quad$\vc \leftarrow \vc - {\vc(i)\over \|\v\|^2}\tilde\vrho $         \quad \quad \textbf{-Auxiliary}                  	\\
 		\quad$\vrho \leftarrow \vrho - {1\over \|\v\|^2}\tilde\vrho\odot \tilde{\vrho}$  \quad  \quad \textbf{-Auxiliary}                   	\\
 		\textbf{end while}	\\
 		\textbf{return} $\sT$, $\xh = \mM_\sT^\dagger\vy$	\\
 		\bottomrule	
 	\end{tabular}	
 \end{table}	
\subsubsection{Orthogonal least squares with replacement (OLSR)}  \label{sec:alg:olsr}
Having a fast version for OLS, we propose now a novel algorithm that is based on it that we name
OLS with replacement (OLSR). \rg{The idea behind OLSR is very simple. It generates an initial support of size $k+1$, e.g., using FOLS, and then keeps replacing the least contribution atom (to reducing the error) in its current approximated support with the most contributing atom outside of the support. This technique stops when the atom outside the support contributes less than the atom that has been just removed. Thus, ending with a reconstruction $\xh$ with support of size $k$.}



	\begin{table}[t]	
	\caption{OLS with Replacement (OLSR)}	
	\label{tb:OLSR}	
	\centering	
	\begin{tabular}{l}	
		\toprule	
		Input: dictionary $\mM$, measurement $\vy$, cardinality $k$ ~~~~~~~~~~~~~~~~~~~~	\\
		Output:  $\xh$ \rg{of (at most)} $k$ elements, and $\sT$ its support \\
		\midrule	
		$\{\sT,\vc,\vrho,\epsilon_0\}\leftarrow $FOLS$(\mM,\vy,k+1)$	\\
		\textbf{if} $\epsilon_0 \approx 0 $ and $|T| \le k$\\
		\quad\textbf{return} $\sT$, $\xh = \mM_\sT^\dagger \vy$	\\
		\textbf{init}	\\
		\quad $\vc^{0} \leftarrow \mM^T\vy$, $\mB = (\mM_\sT^T\mM_\sT)^{-1}$, $\xh \leftarrow \mB\vc^{0}_\sT$	\\
		\textbf{loop}	\\
		\quad	$j \leftarrow \arg \min_{j\in \sT} \left\{\xh(j)^2/\mB(j,j)\right\}$   \textbf{-Find least contributing atom}	\\
		\quad	$\{\vc,\vrho\}\leftarrow\textit{updRem}(\mM,\mB,\sT,\xh,\vc,\vrho,j)$\quad (Alg. \ref{tb:upd})\\
		\quad	remove $\dj$ from $\mB$ using \eqref{eq:col_rem} \quad \textbf{-Matrix inversion lemma}	\\
		\quad	$\sT\leftarrow \sT \setminus \{j\}$ \quad \textbf{-Remove atom from the support}	\\
		\quad	$i \leftarrow \arg \max_{i\notin \sT} \left\{\vc(i)^2/ \vrho(i)\right\}$	 \textbf{-Atom most reducing the error}  \\
		\quad	\textbf{exit loop if} $\vc(i)^2/ \vrho(i) \geq \xh(j)^2/\mB(j,j) $	\textbf{Check if replacing helps}
		\\
		\quad	update $\mB$ with $\di$ using \eqref{eq:shermor}	 \quad \textbf{-Matrix inversion lemma}\\
		\quad	$\sT \leftarrow \sT\cup \{i\} $   \quad \textbf{-Add atom to the support}	\\
		\quad	$\xh \leftarrow \mB\vc^{0}_\sT $	 \quad \textbf{-Calculate current estimate to solution}\\
		\quad	$\{\vc,\vrho\} \leftarrow \textit{updAdd}(\mM,\mB,\sT,\xh,\vc,\vrho)$\quad (Alg. \ref{tb:upd})\\
		\textbf{end loop}	\\
		\textbf{return} $\sT$, $\xh$	\\
		\bottomrule	
	\end{tabular}	
\end{table}	
\subsubsection{Iterative orthogonal least squares with replacement (IOLSR)} \label{sec:alg:iolsr}
The second improvement introduced in this paper to OLS (which has the same computational cost) is IOLSR. It resembles Efroymson’s algorithm (\cite{miller2002subset}, Section 3.3) for the popular stepwise regression. At each step a new atom enters the support according to the regular OLS selection rule \eqref{eq:coll_add:resid}, and a test is performed to see if taking out one of the other atoms in the support will lower the residual compared to the beginning of the iteration (i.e. if the least-contributing column \eqref{eq:leastcontcol} is different than the one just added). If yes, a column is removed from the selected support. Otherwise, the selection set length is enlarged by 1.  

\subsubsection{\rg{Efficient implementation for OLSR and IOLSR}} 

\rg{Both OLSR and IOLSR require a single multiplication by $\mM$ at each loop iteration, similar to OMP. 
In general, this costs $\OOO(mn)$ flops, but for specific operators it might be lower, e.g., for DCT it is $\OOO(n\log n)$. Moreover, calculating the least and most contributing atoms directly is computationally demanding (as one needs to remove/add each candidate atom and then perform least square approximation with it only for calculating the error it produces in the reconstruction).
To facilitate these accelerations, we introduce two length $n$ vectors similar to FOLS, as described in \eqref{eq:LSOMP:c_update} and \eqref{eq:FLSOMP:rho_update}. 
While in FOLS we have only added elements to $\sT$, OLSR and IOLSR require also the removal of atoms from $\sT$, which is achieved in a similar way: The addition operator in \eqref{eq:LSOMP:c_update} and \eqref{eq:FLSOMP:rho_update} is replaced by  subtraction because by subtracting an element the residual grows and the subspace spanned by $\mM_\sT$ is reduced. Algorithm~\ref{tb:upd} describes how to perform both procedures efficiently. It is based on \eqref{eq:LSOMP:c_update}, \eqref{eq:FLSOMP:rho_update}, and lemmas \ref{lem:intro:bckgrnd:xhatival} and \ref{lem:intro:bckgrnd:leastcol} relying on the updated value of $\mB$ and $\xh$ from the previous iteration. We turn now to introduce our accelerations to both algorithms.}

\rg{ Algorithm \ref{tb:OLSR} presents our efficient implementation for OLSR. 
We provide here an explanation for its steps.
After producing a support estimation of size $k+1$ using FOLS, it efficiently removes and adds atoms. The least contributing atom is found using the formula in Lemma \ref{lem:intro:bckgrnd:leastcol}. The found atom is then removed using the matrix inversion lemma for column removal (Lemma~\ref{lem:intro:bckgrnd:shermor_rem}). To do this efficiently, we keep some auxiliary variables in the memory, which are updated using the ``updRem'' procedure from Algorithm~\ref{tb:upd}. Then, if an atom exists such that the residual after its inclusion in the support will be lower than the beginning of the iteration, it is inserted to the chosen set. The algorithm stops when no atom in the dictionary satisfies this condition, ending up with a support of size $k$. The steps for selecting the new atom that reduces the error the most and adding it are the same ones used in Algorithm~\ref{tb:FOLS} for FOLS (i.e., seeking the atom that minimizes the condition in Lemma~\ref{lem:intro:bckgrnd:leastcol}). The auxiliary variables used for the column addition in OLSR are updated in the ``updAdd'' procedure in Algorithm~\ref{tb:upd}.}

\rg{We also provide an efficient implementation for IOLSR in Algorithm \ref{tb:IOLSR}. At each iteration of this algorithm we first find the atom that reduces the error the most and add it (in the same way as in OLSR). We calculate the updated estimate of the solution with the updated support that includes now the new atom. Then we look for the least contributing atom in the support (using Lemma~\ref{lem:intro:bckgrnd:leastcol}). If it is the same as the atom just added then we continue to the next iteration. Otherwise, we remove the least contributing atom using Lemma~\ref{lem:intro:bckgrnd:shermor_rem} (we also update the auxiliary variables as we have done in OLSR). The stopping criterion for the algorithm is either support size based, i.e., the algorithm stops when the size of the support is $k+1$ (greater then $k$), or error based. In the first case, after the final iteration we remove one atom from the support to make its size $k$. We perform the final reconstruction using least squares with the updated support. 

}

	\begin{table}[t]	
	\caption{Iterative OLS with replacement (IOLSR)}	
	\label{tb:IOLSR}	
	\centering	
	\begin{tabular}{l}	
		\toprule	
		Input: dictionary $\mM$, measurement $\vy$, target cardinality $k$ or error $\epsilon_t$	\\
		Output:  $\xh$ with $k$ elements or $\epsilon_t$ residual error, and $\sT$ its support 	\\
		\midrule
		\textbf{init:}	\\
		\quad $\sT\leftarrow\{\}$, $\epsilon_0 \leftarrow \|\vy\|^2$ , $\vrho \leftarrow \bm{1}_{N\times 1}$, $j\leftarrow 1$, $\vc^{0} \leftarrow \mM^T\vy$, $\vc \leftarrow \vc^{0}$	\\
		\textbf{while} $\left(|\sT|<k + 1\right)$ \textbf{or} $\left(\epsilon_0>\epsilon_t \right)$   \textbf{-Only one is tested}		\\
		\quad $i \leftarrow \arg \max_{i\notin \sT} \left\{\vc(i)^2/ \vrho(i)\right\}$   \textbf{-Atom most reducing the error}		\\
		\quad update $\mB$ with $\di$ according to \eqref{eq:shermor}	\quad  \textbf{-Matrix inversion lemma}  \\
		\quad $\sT \leftarrow \sT\cup \{i\} $  \quad   \textbf{-Add atom to the support}	  	\\
		\quad $\xh \leftarrow \mB\vc^{0}_\sT $ \quad   \textbf{-Calculate current estimate to solution}		\\
		\quad $\epsilon_0 \leftarrow \epsilon_0 -   \xh(|\sT|)^2/\mB(|\sT|,|\sT|)  $   \quad \textbf{Update residual} 	\\
		\quad $\{\vc,\vrho\} \leftarrow \textit{updAdd}(\mM,\mB,\sT,\xh,\vc,\vrho)$\quad (Alg. \ref{tb:upd})\\
		\quad $j \leftarrow \arg \min_{j} \left\{\xh(j)^2/\mB(j,j)\right\}$  \textbf{-Find least contributing atom} 	\\
		\quad \textbf{if}  $j \neq |\sT|$  \textbf{Check if $j$ is the atom just added}	\\
		\quad\quad $\{\vc,\vrho\}\leftarrow\textit{updRem}(\mM,\mB,\sT,\xh,\vc,\vrho,j)$\quad (Alg. \ref{tb:upd})\\
		\quad \quad$\epsilon_0 \leftarrow \epsilon_0 +   \xh(j)^2/\mB(j,j)  $  \quad \textbf{-Update residual}	\\
		\quad \quad remove $\dj$ from $\mB$ using \eqref{eq:col_rem}	\quad  \textbf{-Matrix inversion lemma} \\
		\quad \quad$\sT\leftarrow \sT \setminus \{j\}$	\quad  \textbf{-Remove atom from the support} \\
		\quad \textbf{end if}	\\
		\textbf{end while}	\\
		\textbf{if} $|\sT|>k$ \textbf{-Only for the case of $k$ based stopping criterion}	\\
		\quad\quad perform: $\sT\leftarrow \sT \setminus \{j\}$ \textbf{-Remove atom from the support} 	\\
		\textbf{return} $\sT$, $\xh=\mM_\sT^\dagger \vy$	\\
		\bottomrule	
	\end{tabular}	
\end{table}	

\subsubsection{IOLSR and OLSR properties} 
\label{sec:IOLSR_OLSR_prop}

\begin{table*}[t]		
	\caption{Fast update procedures for the auxiliary vectors in OLSR and IOLSR}		
	\label{tb:upd}		
	\centering		
	\begin{tabular}{ll}		
		\toprule	
\begin{minipage}{0.37\textwidth}
$\mathrm{updAdd}(\mM,\mB,\sT,\xh,\vc,\vrho)$
\begin{addmargin}{0.3cm}
	$ \v \leftarrow \mM_\sT\mB(:,|T|)$\\
	$\tilde{\vrho}\leftarrow \mM^T\v$\\
	$\vc \leftarrow \vc - (\xh(k)/\mB(\tau,\tau))\tilde\vrho$\\
	$\vrho \leftarrow \vrho - (1/\mB(\tau,\tau))\tilde\vrho\odot \tilde\vrho$\\
	\textbf{return} $\vc,\vrho$
\end{addmargin}
\end{minipage}
&
\begin{minipage}{0.37\textwidth}
$\mathrm{updRem}(\mM,\mB,\sT,\xh,\vc,\vrho,j)$
\begin{addmargin}{0.3cm}
$ \v \leftarrow \mM_\sT\mB(:,j)$\\
$\tilde{\vrho}\leftarrow \mM^T\v$\\
$\vc \leftarrow \vc + (\xh(k)/\mB(\tau,\tau))\tilde\vrho$\\
$\vrho \leftarrow \vrho + (1/\mB(\tau,\tau))\tilde\vrho\odot \tilde\vrho$\\
\textbf{return} $\vc,\vrho$
\end{addmargin}
\end{minipage}\\
		\bottomrule		
	\end{tabular}		
\end{table*}

%

IOLSR and OLSR share the same performance guarantees in theorems \ref{thm:noiseless_delta} and \ref{thm:awgn}. We provide convergence speed analysis for OLSR in Theorem \ref{thm:olsr:convergence}. Note that in simulations IOLSR execution time demonstrates linear dependency on $k$ with a similar slope as OMP and OLSR. Another difference worth noting between IOLSR and OLSR, as well as other methods that enable backtracking such as subspace pursuit \cite{sp} or \cosamp \cite{cosamp}, is that IOLSR is able to run with a target residual norm (designated $\epsilon_t$) similarly to OMP and OLS, rather than target sparsity. Target residual as a stopping condition is 
usually more preferable in many applications leading to better results \cite{elad2010sparse}. 

The sparse approximation provided by IOLSR is usually better than the one of OLSR. This is due to the fact that the IOLSR subspace is sequentially optimized (i.e. the back tracing is performed on subspaces of increasing sizes rather than on a subspace of constant size), in comparison to OLSR and other methods that begin with a given support. 

\subsection{Synthesis model numerical experiments} \label{sec:sim}
\begin{figure}
	\centering
	\includegraphics[width=0.49\linewidth]{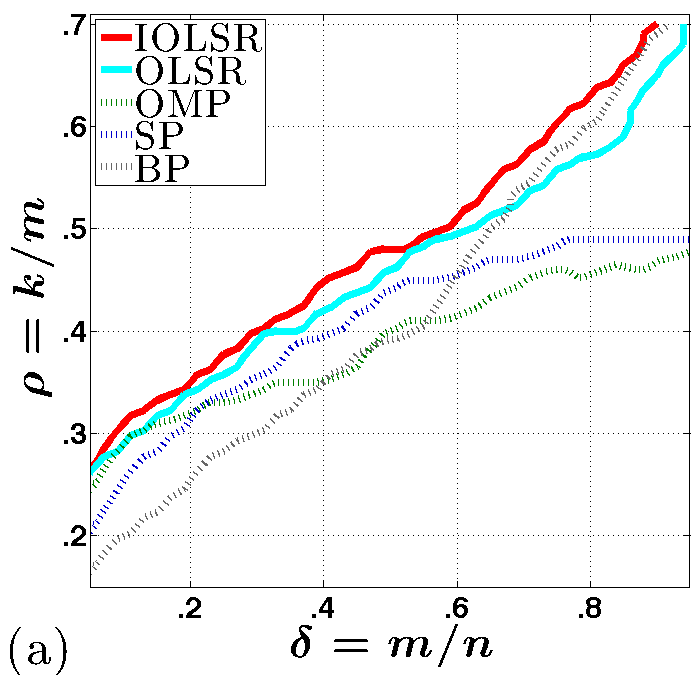}
	\includegraphics[width=0.49\linewidth]{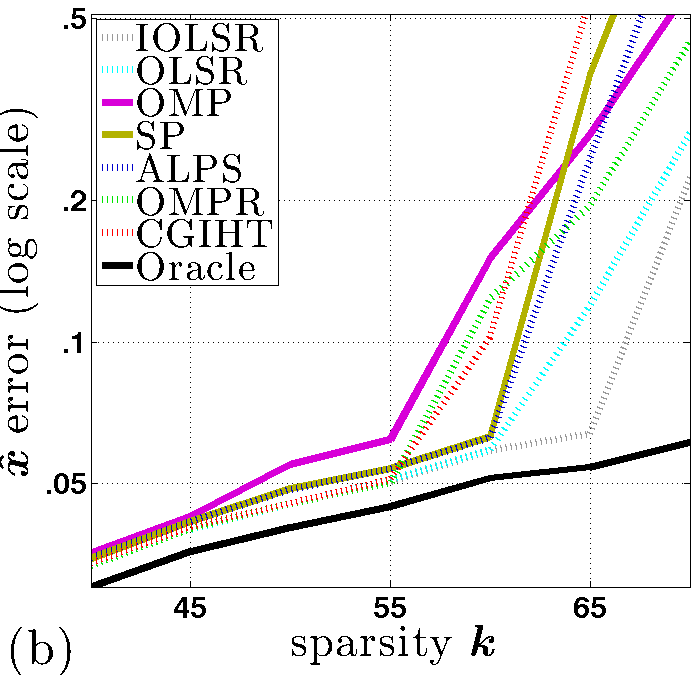}
	\includegraphics[width=0.49\linewidth]{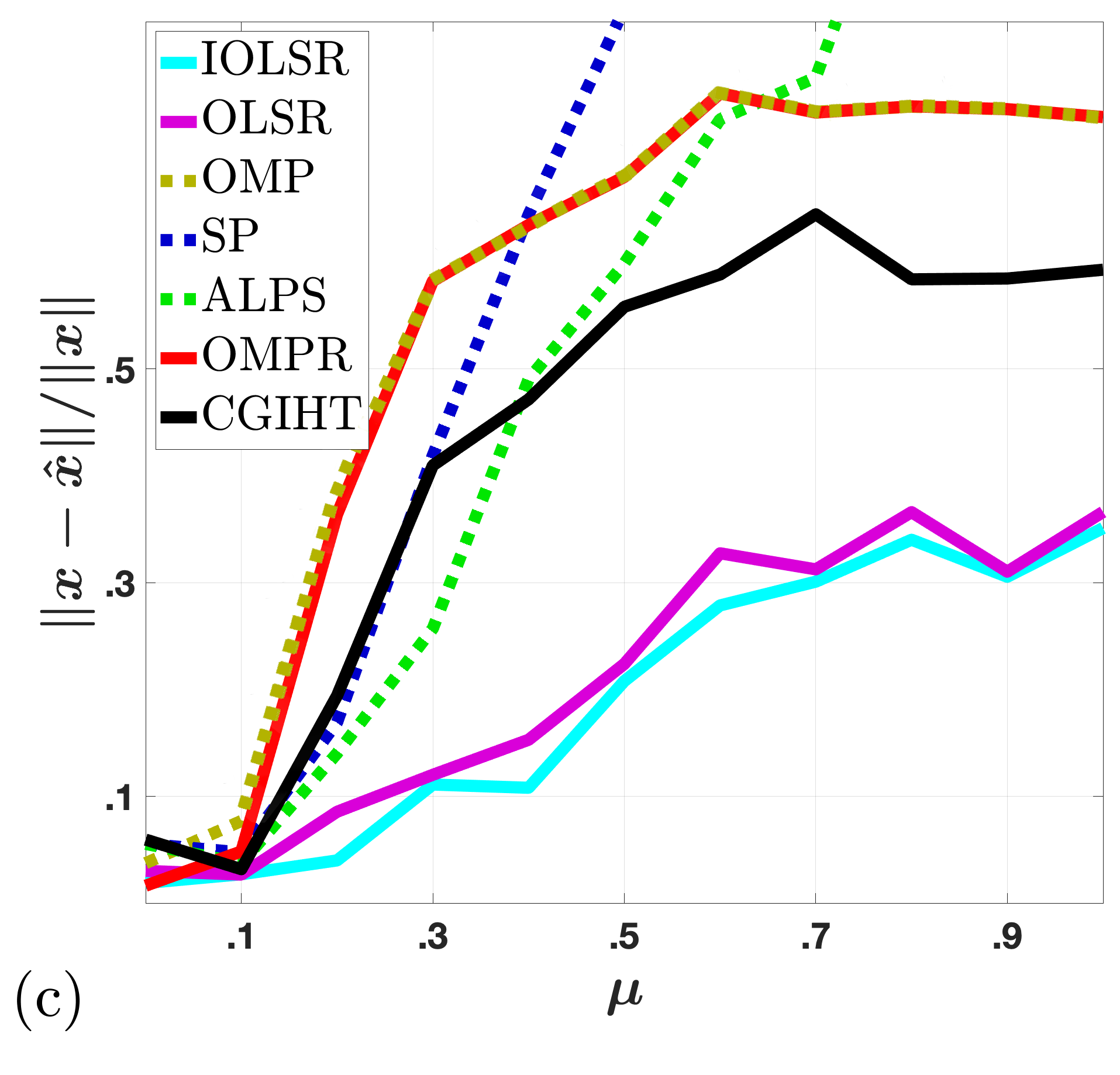}
	\includegraphics[width=0.49\linewidth]{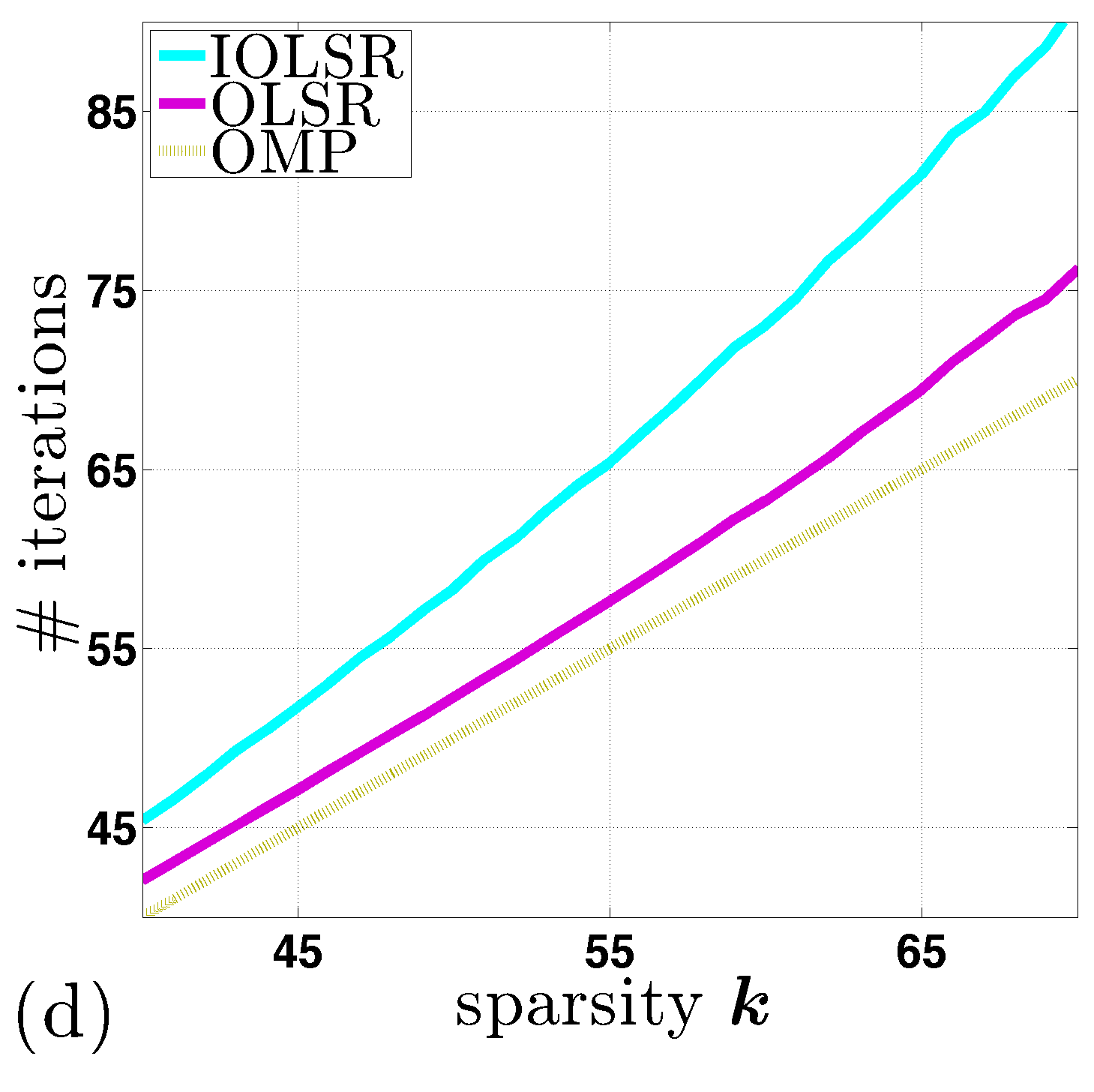}

	\captionsetup{singlelinecheck=off}
	\caption[Synthesis experiments]{
(a) Phase transition diagram (\cite{phasedonoho}) for $m=400$. Areas below the lines of each method represent an error threshold of $\|\xh-\vx\|^2/\|\vx\|^2 \leq 10^{-4}$ (higher lines are better).

(b)  Error ($\|\xh -\vx\|$) vs. sparsity in the presence of noise $\sigma = 0.01\|\vy_0\|/\sqrt{m}$ (yielding an average noise amplitude of 1\% of $\|\vy_0\|$) compared to an oracle that knows the true support; $n=600,\ m=200$.

(c) Error as a function of a coherency damaging parameter $\mu$; $n=300,\ m=100,\ k=30$.

(d)  Number of iterations of OLSR and IOLSR compared to OMP in the experiment in (b).
}
	\label{fig:phase}
\end{figure}
We turn now to numerically evaluate the performance of our methods\footnote{Matlab routines available at {web.eng.tau.ac.il/\textasciitilde raja}}. 
 The results appear in Fig. \ref{fig:phase} in four plots:
 \begin{itemize}
 	\item[(a)]A phase transition diagram, following the methodology of \cite{phasedonoho}. We fix $m$ and variate $n$ and $k$ according to two auxiliary variables. We use a threshold of $\|\vx-\xh\|^2/\|\vx\|^2 \leq 10^{-4}$ and plot the resulting curve. $\mM$ is a normalized random Gaussian matrix, and $\vx$ is a sparse signal with the support selected uniformly at random with values drawn from the normal distribution. Results are averaged over 50 realizations. We compare to OMP, SP and BP. 
 	For BP \cite{basis} we used the CVX package \cite{cvx} to solve $\min \|\vx\|_1\ \mathrm{s.t}\ \vy = \mM\vx$, and then we improve its recovery using debiasing. 
 	\item[(b)] An experiment with noise, where we fix the size of the dictionary, corrupt the measurements by AWGN with $\sigma = 0.01\cdot\|\vy_0\|/\sqrt{m}$, where $\vy_0 = \mM\vx$, and plot the recovery error vs. the sparsity. Results are averaged over 1000 instances of $\mM$ and $\vx$, drawn as in (a). 
We compare to OMP, OMPR, CGIHT, SP, ALPS (ALgebraic PursuitS) \cite{Cevher11ALPS} and an oracle that has the true support $L$ of the $\vx$:
\begin{eqnarray}
\xh_{oracle} = \mM_L^\dag \vy.
\end{eqnarray}
For ALPS we use the 0-ALPS variant\footnote{We used the code from lions.epfl.ch/alps with the default parameters therein.}.  
We have compared also to BPDN, using the method in \cite{basis} but do not present it in the graph since it has performed worse than OMP.
\item[(c)] A test of the resilience to correlation in the dictionary. For a fixed $n,m,k$ we generate $\mM$ from a Gaussian distribution. We then increase the correlation in the dictionary by performing $\vd_{(i)}=\vd_{(i)} + \mu \vd_{(i+1)}$ for each atom, repeating the process five times to increase the effect. Results are averaged over 1000 realizations. We compare to OMP, OMPR, CGIHT, OLS, SP and ALPS. \rg{OMPR is very close to OMP for large coherence values.}
\item[(d)] Comparison between the number of iterations required to perform OLSR and IOLSR compared to OMP as a function of $k$ (same setup of experiment (b)). Recall that the computational cost of each iteration of our proposed methods is similar to OMP. 	
 \end{itemize}
 
 One may notice from the graphs the advantage that OLSR and IOLSR have on the other techniques both in the noisy and noiseless cases. In particular, notice the high coherence case, where the advantage of our strategies is more significant. Also note that OMPR is sensitive to high coherence in a similar way to OMP, where our approach shows better performance than all methods. IOLSR shows better performance than OLSR but requires a larger computational time. Yet, the number of iterations in both methods is relatively close to OMP. 

\FloatBarrier

\section{Theoretical performance guarantees of the synthesis least-residual techniques}\label{sec:theory_synthesis}
The OLSR and IOLSR performance guarantees are presented hereafter using a series of lemmas and theorems. To this end, the following \rg{notations are} used: $\sL$ is the true support s.t. $\vx(j)\neq 0$, $\forall j \in \sL$, and $0$ otherwise; $\sT$ is a set of $k$ indices that represent the chosen support of an algorithm at a specific iteration; $\Lt\triangleq \sL\setminus \sT$, $\ \Tt \triangleq \sT\setminus \sL$; $\ \xt(i)=\vx(i)\ \ \forall i\in \Lt$, and $0$ otherwise; $\yt= \mM\xt$; finally, $\ \kappa$ denotes the number of atoms in the true support that are not yet identified ($\kappa=|\Lt|=\|\xt\|_0$).
In the proofs here we rely on the following lemma from \cite{omp_rip_2}.  
 
\begin{lemma}	[Lemma 2.1 in \cite{omp_rip_2}] \label{lem:2_1}
	Given $\vx_1,\vx_2$, such that $\|\vx_1\|_0 + \|\vx_2\|_0 \leq 2k$, $\vx_1\perp \vx_2$, and a dictionary $\mM$ with a RIP constant $\delta<1$ of order $2k$, then: 
	\begin{equation}|\cos \angle (\mM\vx_1,\mM\vx_2)| \leq \delta. \label{eq:RLSOMP:noiseless:lemma_2_1} \end{equation} 
\end{lemma}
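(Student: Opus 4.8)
The plan is to reduce the statement to a two-sided spectral bound on the $2\times 2$ Gram matrix of the images $\mM\vx_1$ and $\mM\vx_2$. Since both sides of \eqref{eq:RLSOMP:noiseless:lemma_2_1} are invariant under rescaling $\vx_1$ and $\vx_2$, I would first normalize so that $\|\vx_1\|=\|\vx_2\|=1$; together with $\vx_1\perp\vx_2$ this makes $\{\vx_1,\vx_2\}$ orthonormal. Writing $a=\norms{\mM\vx_1}$, $b=\norms{\mM\vx_2}$ and $c=\ip{\mM\vx_1,\mM\vx_2}$, the cosine is $c/\sqrt{ab}$ (with $a,b\ge 1-\delta>0$ by the RIP, so this is well defined), and the claim becomes $c^2\le\delta^2 ab$.

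The key observation I would use is that every combination $s\vx_1+t\vx_2$ is supported on $\mathrm{supp}(\vx_1)\cup\mathrm{supp}(\vx_2)$, a set of size at most $2k$, and has norm $\sqrt{s^2+t^2}$ by orthonormality. Applying the RIP of order $2k$ to all such combinations shows that $G=\left[\begin{smallmatrix} a & c \\ c & b\end{smallmatrix}\right]$ satisfies $(1-\delta)\mI\preceq G\preceq(1+\delta)\mI$, i.e. both eigenvalues of $G$ lie in $[1-\delta,1+\delta]$.

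Next I would write the eigenvalues as $\bar m\pm\sqrt{h^2+c^2}$ with $\bar m=(a+b)/2$ and $h=(a-b)/2$. The two-sided eigenvalue bound then gives $\sqrt{h^2+c^2}\le\min\{1+\delta-\bar m,\ \bar m-1+\delta\}$, hence $c^2\le\min\{(1+\delta-\bar m)^2,(\bar m-1+\delta)^2\}-h^2$. It remains to verify the algebraic inequality $\min\{(1+\delta-\bar m)^2,(\bar m-1+\delta)^2\}\le\delta^2\bar m^2+(1-\delta^2)h^2$, which, since $h^2\ge 0$, follows from $\min\{(1+\delta-\bar m)^2,(\bar m-1+\delta)^2\}\le\delta^2\bar m^2$. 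This last step splits into the easy case $\bar m\ge 1$ and the case $\bar m\le 1$, where the binding term is $(\bar m-1+\delta)^2$ and the inequality reduces to $\bar m(1-\delta)\le 1-\delta$. Combining everything yields $c^2\le\delta^2(\bar m^2-h^2)=\delta^2 ab$, and dividing by $ab$ gives the claim.

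The main obstacle is sharpness of the constant. The naive route—bounding $|c|\le\delta$ from the eigenvalue gap and $\sqrt{ab}\ge 1-\delta$ separately—only yields $\delta/(1-\delta)$, which is too weak. Obtaining the clean constant $\delta$ forces one to exploit the upper and lower RIP inequalities \emph{simultaneously} through the two-sided spectral constraint, so that the diagonal entries $a,b$ (which govern the normalization $\sqrt{ab}$) and the off-diagonal $c$ are controlled jointly rather than independently; the short case analysis on $\bar m$ is exactly what converts this joint control into the factor $ab$.
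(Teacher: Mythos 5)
The paper itself offers no proof to compare against: it imports this statement verbatim as Lemma~2.1 of \cite{omp_rip_2} and uses it as a black box. Your argument is therefore a genuine addition, and it is correct. The reduction to the $2\times 2$ Gram matrix $G=\left[\begin{smallmatrix} a & c\\ c & b\end{smallmatrix}\right]$ of $\mM\vx_1,\mM\vx_2$ is sound: after normalizing $\vx_1,\vx_2$ to an orthonormal pair, every combination $s\vx_1+t\vx_2$ is supported on a set of size at most $2k$ and satisfies $\norms{s\vx_1+t\vx_2}=s^2+t^2$, so quantifying the RIP over all $(s,t)$ is exactly the two-sided spectral containment $(1-\delta)\mI\preceq G\preceq(1+\delta)\mI$. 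Your eigenvalue formula $\bar m\pm\sqrt{h^2+c^2}$ is right, and the case split on $\bar m$ does close: for $\bar m\ge 1$ one has $0\le 1+\delta-\bar m\le\delta\le\delta\bar m$, while for $\bar m\le 1$ one has $0\le\bar m-1+\delta\le\delta\bar m$ since $\bar m(1-\delta)\le 1-\delta$; discarding the nonnegative term $(1-\delta^2)h^2$ then yields $c^2\le\delta^2(\bar m^2-h^2)=\delta^2 ab$, and $a,b\ge 1-\delta>0$ keeps the cosine well defined. Your closing remark identifies the real content of the lemma: the routine polarization identity bounds $|\ip{\mM\vx_1,\mM\vx_2}|$ by $\delta$ and the norms $\|\mM\vx_i\|$ by $\sqrt{1-\delta}$ \emph{separately} and only delivers the weaker constant $\delta/(1-\delta)$; obtaining the clean constant $\delta$ requires coupling the diagonal of $G$ (which sets the normalization) to the off-diagonal entry through the joint spectral constraint, which is precisely what your case analysis accomplishes.
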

\begin{rmk}
By using the fact that  $\yt = \mM\xt$ and $P_\sT\yt = \mM\vx_0$ for some $\vx_0$ with a support $\sT$ (which is disjoint with the one of $\xt$), we have from Lemma~\ref{lem:2_1} that
		$$\|P_\sT \yt\|^2  = \|P_\sT\yt\|\|\yt\|\cos\angle(P_\sT\yt,\yt) \leq \|P_\sT\yt\|\|\yt\|\delta.$$
		Hence\vs
		\begin{equation}
		\|P_\sT\yt\|^2\leq \delta^2\|\yt\|^2.
		\label{eq:stepI:P}
		\end{equation}
		\rg{Because $\|P_\sT\yt\|^2+\|R_\sT\yt\|^2 = \|\yt\|^2$, we also have }
		\begin{equation}
		\|R_\sT\yt\|^2  \geq (1-\delta^2) \|\yt\|^2 \label{eq:stepI:R}.
		\end{equation}
 \end{rmk}
 
 Another lemma that we use is the following: 
 
\begin{lemma} \label{lem:c_delta}
 The square of the maximum cosine of the angle between $R_\sT\di$, where $i\in\Lt$, and $R_\sT\yt$ obeys 
 \begin{equation}
 \label{eq:stepIII:c_delta}
 \max_{i\in \Lt}\left( \cos \angle (R_\sT\di,R_\sT\yt)\right)^2 \geq {1\over \kappa}c_\delta,
 \end{equation}
 where $c_\delta = (1-\delta^2)(1-\delta)$. 
 \end{lemma}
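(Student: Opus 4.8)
The plan is to lower-bound the largest squared cosine by isolating a single good index $i\in\Lt$ and locating it with a Cauchy--Schwarz (averaging) argument over the $\kappa$ as-yet-undetected atoms.

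First I would write the squared cosine explicitly. Since $R_\sT$ is a symmetric idempotent projector, for each $i$
\[
\cos^2\angle(R_\sT\di,R_\sT\yt)=\frac{\ip{\di,R_\sT\yt}^2}{\norms{R_\sT\di}\,\norms{R_\sT\yt}},
\]
using $\ip{R_\sT\di,R_\sT\yt}=\ip{\di,R_\sT\yt}$. As $\|\di\|=1$ and an orthogonal projection does not increase norm, $\norms{R_\sT\di}\le 1$, so this factor can be discarded from the denominator at the price of an inequality, giving $\cos^2\angle(R_\sT\di,R_\sT\yt)\ge \ip{\di,R_\sT\yt}^2/\norms{R_\sT\yt}$. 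It thus suffices to bound $\max_{i\in\Lt}\ip{\di,R_\sT\yt}^2$ from below.

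The key identity comes from expanding $\yt=\mM\xt=\sum_{i\in\Lt}\xt(i)\di$. Because $R_\sT$ is self-adjoint and idempotent,
\[
\norms{R_\sT\yt}=\ip{\yt,R_\sT\yt}=\sum_{i\in\Lt}\xt(i)\,\ip{\di,R_\sT\yt}.
\]
Applying Cauchy--Schwarz to this sum over the $\kappa=|\Lt|$ indices and bounding the $\ell_2$ norm of the correlations by $\sqrt{\kappa}\,\max_{i\in\Lt}|\ip{\di,R_\sT\yt}|$ yields $\norms{R_\sT\yt}\le \|\xt\|\,\sqrt{\kappa}\,\max_{i\in\Lt}|\ip{\di,R_\sT\yt}|$. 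Squaring and rearranging gives $\max_{i\in\Lt}\ip{\di,R_\sT\yt}^2\ge \norms{R_\sT\yt}^2/(\kappa\,\|\xt\|^2)$, hence $\max_{i\in\Lt}\cos^2\angle(R_\sT\di,R_\sT\yt)\ge \norms{R_\sT\yt}/(\kappa\,\|\xt\|^2)$.

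To close the argument I would show $\norms{R_\sT\yt}\ge c_\delta\|\xt\|^2$, which is where the two RIP factors appear and multiply. The factor $(1-\delta^2)$ is given directly by \eqref{eq:stepI:R}, i.e.\ $\norms{R_\sT\yt}\ge(1-\delta^2)\norms{\yt}$; the remaining $(1-\delta)$ is the lower RIP bound on $\yt=\mM\xt$, namely $\norms{\yt}\ge(1-\delta)\|\xt\|^2$, valid because $\|\xt\|_0=\kappa\le k$. Combining, $\norms{R_\sT\yt}\ge(1-\delta^2)(1-\delta)\|\xt\|^2=c_\delta\|\xt\|^2$, and substituting into the previous bound gives the claimed $\max_{i\in\Lt}\cos^2\angle(R_\sT\di,R_\sT\yt)\ge c_\delta/\kappa$.

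I expect the difficulty to be bookkeeping rather than conceptual: orienting the Cauchy--Schwarz step so that $1/\kappa$ (and not $\kappa$) emerges, and checking that the RIP orders line up. Lemma~\ref{lem:2_1} and \eqref{eq:stepI:R} need order $2k$ (the supports $\Lt$ and $\sT$ are disjoint with total size at most $2k$), whereas the lower bound on $\norms{\yt}$ needs only order $\kappa\le k$; both are covered by the single constant $\delta$ at order $2k$ through monotonicity of the RIP constants.
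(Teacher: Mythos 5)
Your proposal is correct and follows essentially the same route as the paper: both hinge on the expansion $\norms{R_\sT\yt}=\sum_{i\in\Lt}\xt(i)\ip{\di,R_\sT\yt}$, the bound $\|R_\sT\di\|\le 1$, a $\sqrt{\kappa}$ factor from comparing $\ell_1$/$\ell_2$/$\ell_\infty$ norms over $\Lt$, and the lower bound $\norms{R_\sT\yt}\ge(1-\delta^2)(1-\delta)\norms{\xt}$ from \eqref{eq:stepI:R} plus the RIP. The only cosmetic difference is that you argue directly by rearranging, whereas the paper packages the same inequalities as a proof by contradiction.
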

 \begin{proof}
 	We prove by contradiction, inspired by the proof of Theorem 2.2 in \cite{omp_rip_2}. Consider $\|R_\sT\yt\|$: 
\begin{eqnarray}
\label{eq:39393939}
 \|R_\sT\yt\| =& {|\ip{R_\sT\yt,R_\sT\yt}|\over \|R_\sT\yt\|} = {|\ip{\sum_{i\in \Lt} x(i)R_\sT\di,R_\sT\yt}|\over \|R_\sT\yt\|}\\ \nonumber
    \leq & {\sum_{i\in \Lt} |\vx(i)\ip{R_\sT\di,R_\sT\yt}| \over \|R_\sT\yt\|}.
 \end{eqnarray}
 This leads to:\vspace{-0.2cm}
 \begin{equation} 
\label{eq:stepIII:up} 
 \|R_\sT\yt\| \overset{(a)}{\leq} \sum_{i\in \Lt} |\vx(i) \cos\alpha_i| \overset{(b)}{<}  \sqrt{{1\over \kappa}c_\delta} \|\xt\|_1\overset{(c)}{\leq} \sqrt{c_\delta}\|\xt\|,
 \end{equation} 
 where we define $\alpha_i = \angle(R_\sT\di,R_\sT\yt)$, and use the fact that $\|R_\sT\di\|^2\leq\|\di\|^2=1$ in transition $(a)$. For step $(b)$ we assume that for some positive constant $c_\delta$,  \  $|\cos\alpha_i|< \sqrt{c_\delta/\kappa}$, \  for all $i\in \Lt$. For $(c)$ we use the inequality $\|\xt\|_1\leq\sqrt{\kappa} \|\xt\|$.
 On the other hand, by \eqref{eq:stepI:R} and the RIP we have:
 \begin{align}\|R_\sT\yt\|\geq \sqrt{1-\delta^2}\|\yt\|\geq \sqrt{1-\delta^2}\sqrt{1-\delta}\|\xt\|	\label{eq:stepIII:dn}.
 \end{align}  
 
 Combining \eqref{eq:stepIII:up} and \eqref{eq:stepIII:dn}, we get that in order to produce a contradiction we \rg{can} set 
 $\sqrt{1-\delta^2}\sqrt{1-\delta} = \sqrt{c_\delta} $.
 \end{proof}
  While \rg{Lemma~\ref{lem:c_delta} may} be used to bound the contribution of a member of $\Lt$ from below, the following lemma gives an upper bound \rg{(in the noiseless case)} for the contribution to the estimation \rg{error} (defined in Lemma \ref{lem:intro:bckgrnd:coladderr}) of at least one of the erroneously selected atoms in the chosen support. 
  
  \rg{We will use this lemma, combined with Lemma~\ref{lem:c_delta}, to prove that all time OLSR or IOLSR did not converge to the true support, we may find a ``correct'' atom outside the estimated support that contributes more to the estimation error than a ``wrong'' atom contained in the current support. This will lead us to Theorem~\ref{thm:noiseless_delta} below  that shows that both techniques perfectly reconstruct the signal in the noiseless case (under some RIP conditions). An extension to the noisy case is provided afterwards in Theorem~\ref{thm:awgn}.} 
  
 \begin{lemma} \label{lem:min_RA_aj}
 	 Let $\vy = \mM\vx$. For a given support $\sT$ with $\zeta$ atoms that are not in the true support of $\vx$, there exists an atom $\dj$, $j\in\Tt$, such that: 
 	 \begin{equation}
 	 \label{eq:lem:min_RA_aj}
 	 {1\over \zeta}{\delta^2\over 1-\delta}\|\yt\|^2 \geq \ip{\vv{\Rj \dj},\vy}^2. 
 	 \end{equation}
 \end{lemma}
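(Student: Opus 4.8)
The plan is to bound the \emph{minimum} contribution over the erroneous atoms $j\in\Tt$ by their \emph{average}, so that it suffices to control the sum $\sum_{j\in\Tt}\ip{\vv{\Rj\dj},\vy}^2$; dividing by $\zeta=|\Tt|$ and taking the minimizing index then yields the required $\dj$. First I would rewrite each summand through the least-squares coefficients. Setting $\xh=\mM_\sT^\dagger\vy$ and invoking Lemma~\ref{lem:intro:bckgrnd:xhatival}, we have $\ip{\Rj\dj,\vy}=\xh(j)\,\|\Rj\dj\|^2$, hence
\begin{equation}
\ip{\vv{\Rj\dj},\vy}^2=\frac{\ip{\Rj\dj,\vy}^2}{\|\Rj\dj\|^2}=\xh(j)^2\,\|\Rj\dj\|^2\leq\xh(j)^2,\nonumber
\end{equation}
where the last step uses $\|\Rj\dj\|\leq\|\dj\|=1$. (This is just the per-atom error change of Lemma~\ref{lem:intro:bckgrnd:coladderr}.) It therefore remains to bound $\sum_{j\in\Tt}\xh(j)^2$.

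The crux is to observe that the coefficients of the \emph{wrong} atoms depend on $\yt$ alone. Decomposing $\vy=\yt+\mM(\vx-\xt)$, the vector $\vx-\xt$ is supported on $\sL\cap\sT\subseteq\sT$, so $\mM(\vx-\xt)$ lies in the column span of $\mM_\sT$ and, since $\mM_\sT^\dagger\mM_\sT=\mI$ by the RIP, its least-squares coefficients are supported on $\sL\cap\sT$ and vanish on $\Tt$. Consequently $\xh(j)=(\mM_\sT^\dagger\yt)(j)$ for every $j\in\Tt$; writing $\wh\triangleq\mM_\sT^\dagger\yt$ we have $\mM_\sT\wh=P_\sT\yt$, so the RIP lower bound (Definition~\ref{def:RIP}) gives $\|P_\sT\yt\|^2=\|\mM_\sT\wh\|^2\geq(1-\delta)\|\wh\|^2$, while \eqref{eq:stepI:P} gives $\|P_\sT\yt\|^2\leq\delta^2\|\yt\|^2$. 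Combining,
\begin{equation}
\sum_{j\in\Tt}\xh(j)^2\leq\|\wh\|^2\leq\frac{1}{1-\delta}\|P_\sT\yt\|^2\leq\frac{\delta^2}{1-\delta}\|\yt\|^2.\nonumber
\end{equation}

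Assembling the two displays yields $\sum_{j\in\Tt}\ip{\vv{\Rj\dj},\vy}^2\leq\frac{\delta^2}{1-\delta}\|\yt\|^2$, and since the minimum of $\zeta$ nonnegative numbers is at most their average, the minimizing $j\in\Tt$ satisfies \eqref{eq:lem:min_RA_aj}. I expect the main obstacle to be the second paragraph: recognizing that the wrong-atom coefficients are governed by $\yt$ alone, via the orthogonal decomposition $\vy=\yt+\mM(\vx-\xt)$ together with $\mM_\sT^\dagger\mM_\sT=\mI$. This reduction is precisely what allows the projection bound \eqref{eq:stepI:P} to be imported; once it is in place, the per-atom identity and the min-versus-average step are routine.
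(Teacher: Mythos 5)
Your proposal is correct and follows essentially the same route as the paper: both arguments chain $\delta^2\|\yt\|^2 \geq \|P_\sT\yt\|^2 \geq (1-\delta)\|\mM_\sT^\dagger\yt\|^2 \geq (1-\delta)\sum_{j\in\Tt}\ip{\vv{\Rj\dj},\vy}^2$ via the biorthogonal coefficient formula of Lemma~\ref{lem:intro:bckgrnd:xhatival}, the normalization $\|\Rj\dj\|\leq 1$, and the orthogonality $\Rj\dj\perp\mM(\vx-\xt)$ (which the paper phrases as swapping $\yt$ for $\vy$ inside the inner product and you phrase as the wrong-atom coefficients of $\vy$ and $\yt$ coinciding), finishing with the same minimum-versus-average step.
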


\begin{proof} 
Denote $\hat{\xt} = (\mA^T\mA)^{-1}\mA^T \yt$ (where $\mA = \mM_T$). We start with a bound on the norm of $P_\sT\yt$ from below as follows: 
 \begin{align*} 
 \|P_\sT\yt\|^2 &\overset{(a)}{\geq} (1-\delta) \|\hat{\xt}\|^2\\
 & \overset{(b)}{=} (1-\delta) \sum_{q\in \sT} {1\over \|\Rq\dq\|^4}\ip{\Rq \dq,\yt}^2 \\
 &\overset{(c)}{\geq} (1-\delta) \sum_{j\in \Tt} {1\over \|\Rj\dj\|^2}\ip{\Rj \dj,\vy}^2 \\
 &= (1-\delta) \sum_{j\in \Tt} \ip{\vv{\Rj \dj},\vy}^2,
 \end{align*}
 where $(a)$ follows from the RIP, $(b)$ follows from \eqref{eq:xh_val}, and $(c)$ is due to the fact that we sum on fewer atoms $|\Tt| < |T|$, reduce the power in the denominator from $4$ to $2$, and replace $\yt$ with $\vy$ since \rg{$\Rj \dj \perp \dq, \forall j \in \tilde{T}, q\in T \setminus \tilde{T}$}.
 The assertion in Lemma \ref{lem:min_RA_aj} now follows by noting that the smallest value of $\langle\vv{\Rj \dj},\vy\rangle^2$ for $j\in\Tt$ is not bigger than the average on $\zeta$ atoms in $\Tt$, and that $ \delta^2\|\yt\|^2\geq \|P_A\yt\|^2 $ from \eqref{eq:stepI:P}.  
\end{proof}
 Using the upper and lower bounds developed in the two previous lemmas, we can arrive at the following bound:   
  \begin{theorem}[RIP bound] \label{thm:noiseless_delta} 
  	Given $\vy=\mM\vx$, where $\|\vx\|_0=k$ and $\mM$ satisfies the RIP of order $2k$ with a constant $\delta\leq 0.445$, the OLSR and IOLSR algorithms yield perfect support reconstruction, $\sT=\sL$.
  \end{theorem}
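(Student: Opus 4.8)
The plan is to run a descent/fixed-point argument. I will show that whenever the size-$k$ support $\sT$ on which OLSR (resp.\ IOLSR) performs its replacement test differs from the true support $\sL$, the cheapest atom the algorithm can drop contributes strictly less to the residual than the best atom it can bring in; hence a strictly improving swap exists and is exactly the one executed. Since every accepted swap strictly decreases $\|R_\sT\vy\|^2$ and there are finitely many supports, the iterates cannot cycle and must halt, and the only support admitting no improving swap is $\sT=\sL$. (Under the order-$2k$ RIP, $\sL$ is also the unique size-$k$ support with zero residual: if $\mM_\sT\vz=\mM_\sL\vx$ with $|\sT|=|\sL|=k$ and $\sT\neq\sL$, the columns of $\mM_{\sT\cup\sL}$ would be linearly dependent, contradicting Definition~\ref{def:RIP}.)

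The core is a single inequality, both sides evaluated on the reduced support $\sT$ against which the pseudocode compares $\vc(i)^2/\vrho(i)$ and $\xh(j)^2/\mB(j,j)$; by \eqref{eq:coll_add:resid} and Lemmas~\ref{lem:intro:bckgrnd:xhatival}--\ref{lem:intro:bckgrnd:leastcol} these are precisely the incoming gain and the outgoing cost relative to $\sT$. For the incoming side I would first record the identity $R_\sT\vy=R_\sT\yt$, valid because $\vy-\yt=\mM(\vx-\xt)$ lies in the span of $\mM_\sT$; consequently, for a missing true atom $i\in\Lt$,
\[
\langle\vv{R_\sT\di},\vy\rangle^2=\left(\cos\angle(R_\sT\di,R_\sT\yt)\right)^2\|R_\sT\yt\|^2 .
\]
Feeding Lemma~\ref{lem:c_delta} and \eqref{eq:stepI:R} into this gives $\max_{i\in\Lt}\langle\vv{R_\sT\di},\vy\rangle^2\ge \frac{c_\delta(1-\delta^2)}{\kappa}\|\yt\|^2$. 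For the outgoing side, Lemma~\ref{lem:min_RA_aj} produces a spurious atom whose removal cost is at most $\frac{1}{\zeta}\frac{\delta^2}{1-\delta}\|\yt\|^2$, and because the algorithm drops the globally cheapest atom, the atom actually removed costs no more than this.

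On a size-$k$ support with $\sT\neq\sL$ one has $\zeta=|\Tt|=|\Lt|=\kappa$, so requiring the incoming gain to exceed the outgoing cost collapses to $c_\delta(1-\delta^2)(1-\delta)>\delta^2$, that is $(1-\delta^2)^2(1-\delta)^2>\delta^2$, i.e.\ $(1-\delta)^2(1+\delta)>\delta$. The boundary is the root of $\delta^3-\delta^2-2\delta+1=0$ lying in $(0,1)$, which is $\approx 0.44504$; thus every $\delta\le 0.445$ makes the inequality strict and forces the improving swap. This is exactly the advertised constant, and an analogous comparison governs IOLSR's add-then-test step.

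The step I expect to be the main obstacle is the bookkeeping of which support each contribution is charged to. OLSR carries a size-$(k+1)$ support and removes before it adds, so one must check that the dropped ``least contribution'' atom is controlled by Lemma~\ref{lem:min_RA_aj} applied to that size-$(k+1)$ support, while the incoming atom is scored on the reduced size-$k$ support used in Lemma~\ref{lem:c_delta}. The genuinely delicate sub-case is when the removed atom is itself a true atom: then $\Lt$, $\kappa$, and $\yt$ all change, and the crude RIP estimate relating the new and old $\|\yt\|^2$ by a factor $\frac{1-\delta}{1+\delta}$ degrades the constant below $0.445$. I would close this by establishing a matching lower bound on the removal cost of a true atom, so that a genuine atom is never the cheapest to drop while any spurious atom remains and only the clean case $\zeta=\kappa$ is ever exercised; together with the monotonicity/finiteness remark of the first paragraph this pins the terminal support to $\sL$, and the noisy statement of Theorem~\ref{thm:awgn} then follows by carrying the extra $\langle\vv{R_\sT\di},\vw\rangle$ terms through the same two bounds.
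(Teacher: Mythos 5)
Your proposal reproduces the paper's argument essentially verbatim: the incoming gain is bounded below by combining Lemma~\ref{lem:c_delta} with \eqref{eq:stepI:R}, the outgoing cost is bounded above by Lemma~\ref{lem:min_RA_aj}, and the two bounds clash exactly when $(1-\delta^2)^2(1-\delta)^2\ge\delta^2$, i.e.\ $\delta\le 0.445$; the finite-termination remark you add is harmless but is not part of this theorem (the paper treats convergence separately in Theorem~\ref{thm:olsr:convergence}). The one substantive divergence is your handling of the sub-case in which the atom dropped from the size-$(k+1)$ support is a true atom. The patch you sketch --- proving that a genuine atom is never the cheapest to drop while a spurious one remains --- cannot work: by Lemmas~\ref{lem:intro:bckgrnd:xhatival} and~\ref{lem:intro:bckgrnd:leastcol} the removal cost of an atom $j$ is $\xh(j)^2\|R_{\sT}\dj\|^2$, so a true atom with an arbitrarily small coefficient $\vx(j)$ can undercut every spurious atom. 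The paper instead sidesteps the case split: it applies Lemma~\ref{lem:min_RA_aj} to the full size-$(k+1)$ support $\sT\cup\{j\}$, notes that $\zeta=|(\sT\cup\{j\})\setminus \sL|\ge\kappa$ holds whether or not $j\in \sL$, and uses only that the dropped atom minimizes the removal cost over all of $\sT\cup\{j\}$ and hence is no more expensive than the spurious atom that lemma exhibits. If you want to be more careful than the paper at this point, the quantity that actually needs controlling in the $j\in\sL$ branch is not the cost of dropping a true atom but the norm $\|\mM\vx_{\sL\setminus(\sT\cup\{j\})}\|$ appearing in that application of Lemma~\ref{lem:min_RA_aj}, as against $\|\yt\|=\|\mM\vx_{\sL\setminus\sT}\|$; these differ when $j\in\sL$, a discrepancy the paper passes over silently.
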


We present now the proof for both OSLR and IOLSR. We consider the case of support-size based stopping condition for both techniques and error-based stopping condition for IOLSR (where $\epsilon_t =0$ since we analyze here the noiseless case).
  
   \begin{proof}[Proof of Theorem \ref{thm:noiseless_delta} ]
 	Assume that the algorithms converged to a support $\sT$, where $\Tt\neq \emptyset$.  	
 	Using Lemma \ref{lem:c_delta} we get that there exists $i\in \Lt\neq \emptyset$ such that: 
 	 	\begin{eqnarray}	 	 
 	 	\label{eq:stepV:up_1}
\nonumber	 \hspace{-0.3in}	\ip{\vv{R_\sT\di},\vy}^2    &\overset{(a)}{=} & \ip{\vv{R_\sT\di},R_\sT\yt}^2 \\   &    = &\|R_\sT\yt\|^2|\cos \alpha_i|^2  \geq \|R_\sT\yt\|^2{1\over \kappa}c_\delta,
 	 	\end{eqnarray}
 	 	where in transition $(a)$ we used $R_\sT \vy=R_\sT\yt$ that holds because $(\vy-\yt)\in\text{span}\{\mM_\sT\}$. 
 	 	
{\em OLSR case:} 	In this part of the proof, $T$ is the support at the stopping point of OLSR. At this stage, the support $T$ is of size $k$.
OLSR stops when there are no atoms that can replace the one that has been extracted (the condition for replacement is that the new residual is lower than the previous one). In \eqref{eq:stepV:up_1}, we have a lower bound on the error change (see Lemma~\ref{lem:intro:bckgrnd:coladderr}) at this stage. We will now turn to provide an upper bound for this error given that $\Tt \ne \emptyset$. Showing that the upper bound is smaller than the lower bound will contradict this assumption.

Denote by $j$ the index of the last atom extracted. Then before its extraction, the support was $T \cup \{j\}$. \rg{Since $|T| = k$, we have that $|T \setminus L| = |L \setminus T| = \kappa$ and thus $\zeta =|T \cup \{j\}\setminus L|\ge \kappa $.}
Combining the fact that the removal of the atom $j$ causes the smallest error change with the result of Lemma \ref{lem:min_RA_aj} (note that the rhs of \eqref{eq:lem:min_RA_aj} is the error change), leads to 
	 \begin{equation}
 	 \label{eq:min_RA_aj_OLSR_proof}
 	 \ip{\vv{R_\sT\dj},\vy}^2 
 	 \le  {1\over \kappa}{\delta^2\over 1-\delta}\|\yt\|^2. 
 	 \end{equation}
 	 
Combining \eqref{eq:stepI:R} and \eqref{eq:stepV:up_1} yields  $$\rg{\ip{\vv{R_\sT\dj},\vy}^2  \ge}  \|R_\sT\yt\|^2{1\over \kappa}c_\delta \overset{}{\geq} \|\yt\|^2{1\over \kappa}(1-\delta^2)c_\delta.$$ Therefore, to contradict the assumption that $\Tt \ne \emptyset$ (with which we started the proof), i.e., \rg{that the stopping criterion was reached with a reconstruction that includes a wrong atom in its support}, the following needs to hold:
 	$$ \|\yt\|^2{1\over \kappa}(1-\delta^2)c_\delta \geq {1\over \kappa}{\delta^2\over 1-\delta}\|\yt\|^2.$$
By eliminating the common terms ($\|\yt\|$ and $\kappa$),
substituting $c_\delta$ from Lemma~\ref{lem:c_delta} and reorganizing the terms, this inequality is equivalent to
$$ (1-\delta^2)^2(1-\delta)^2 \geq \delta^2.$$ 
It is easy to see that this inequality holds if $\delta \le 0.445$. 
 		
{\em IOLSR case:}	IOLSR differs from OLSR in that the new candidate is first inserted into the support, and then an atom for elimination is selected. (In OLSR the opposite happens: First an atom is removed, and then a new one that improves the residual is inserted if \rg{there is such a one). Thus, the proof is similar except for few changes and the bound is identical.}

Denote by $\sT$ the selected set at the beginning of the current iteration, by $i$ the atom added to $\sT$, by $j$ the candidate atom to be removed at this iteration, and $\sT' = \sT \cup \{i \}$. Assume that the current size of $\sT$ is $k$ and that $\tilde{T} \ne \emptyset$. Notice that we will continue removing $i$ and adding $j$ all the time that 
\begin{eqnarray}
\label{eq:IOLSR_error_based_stop_cond}
 \ip{\vv{R_\sT\di},\yt}^2 >  \ip{\vv{R_{\sT' \setminus \{j \} }\dj},\yt}^2, 
\end{eqnarray}
i.e., when the decrease in error due to adding $i$ to $T$ is greater than the increase in error due to the removal of $j$ from $\sT'$. 
\rg{Notice that once this condition does not hold, the algorithm enlarges the support by one. Thus, for a support-size based stopping criterion the algorithm will stop. For an error-based stopping condition the algorithm will continue unless $\tilde{T} = \emptyset$ and then IOLSR will stop as the error will be zero. Therefore, for proving that this method achieves perfect reconstruction for both stopping criteria, we show} now that all time that $\tilde{T} \ne \emptyset$, the inequality in  \eqref{eq:IOLSR_error_based_stop_cond} holds.

Setting $\yt' = \yt + \di\vx(i)$, leads to
\begin{eqnarray}
 \label{eq:ALSOMP:bla}
 		\|R_\sT\yt\|^2 &\geq \|R_\sT\yt\|^2 - \ip{\vv{R_\sT\di},\yt}^2\nonumber\\ &\overset{(a)}{=} \|R_{\sTt}\yt'\|^2
 \overset{(b)}{\geq} (1-\delta^2)\|\yt'\|^2,
\end{eqnarray}
where for transition $(a)$ we use \eqref{eq:coll_add:resid} and the fact that $R_{\sTt}\yt' = R_{\sTt}\yt$, and for $(b)$ we use \eqref{eq:stepI:R}.  		
Now, combining \eqref{eq:stepV:up_1} with \eqref{eq:ALSOMP:bla} leads to 
 \begin{eqnarray}
  \ip{\vv{R_\sT\di},\yt}^2 \ge {1-\delta^2\over \kappa}c_\delta \|\yt'\|^2.
 \end{eqnarray}

Applying the result of Lemma~\ref{lem:min_RA_aj} (with $T'$ and $\yt'$ instead of $T$ and $\yt$ respectively, \rg{where again we have $\zeta \ge \kappa$ as in the case of OSLR)}) provides us with 
 	 \begin{equation}
 	 {1\over \kappa}{\delta^2\over 1-\delta}\|\yt'\|^2 \geq \ip{\vv{R_{\sT' \setminus \{j \} }\dj},\yt}^2. 
 	 \end{equation}

Using simple arithmetic operations (similar to the OLSR case), we have that the condition $\delta \le 0.445$ is sufficient for \eqref{eq:IOLSR_error_based_stop_cond} to hold. Thus, it is guaranteed that if \eqref{eq:IOLSR_error_based_stop_cond} does not hold, then $\tilde{T} = \emptyset$ and we have converged to the correct support.
 \end{proof}

\begin{theorem} \label{thm:olsr:convergence}
	Let $\gamma = c_\delta -\delta^2/c_\delta$, with $c_\delta$ defined by \ \eqref{eq:stepIII:c_delta}. \rg{The number of iteration that noiseless OLSR needs for converge to a solution with a residual lower than $\epsilon_t$ is bounded by }
	\begin{equation}
	b = k \left(1+{1\over \gamma} \ln\left({\|\vy\|^2\over \epsilon_t e^{c_\delta}}\right)\right) \label{eq:olsr:conv_rate} .
	\end{equation} 
\end{theorem}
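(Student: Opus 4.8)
The plan is to track how the squared residual $\|R_\sT\yt\|^2$ decreases across iterations and to show that each OLSR iteration shrinks it by a multiplicative factor bounded away from $1$, so that a geometric-decay argument yields the $\log$ factor in $b$. Each OLSR iteration removes the least-contributing atom and adds the most-contributing one; the net change in the squared residual is the difference between the error gained by removal and the error recovered by addition. The first step is therefore to quantify both quantities using the lemmas already proved. By Lemma~\ref{lem:c_delta} together with \eqref{eq:stepV:up_1} and \eqref{eq:stepI:R}, the best atom to \emph{add} from $\Lt$ reduces the residual by at least $\tfrac{1}{\kappa}(1-\delta^2)c_\delta\|\yt\|^2$; by Lemma~\ref{lem:min_RA_aj}, the least-contributing atom that is \emph{removed} costs at most $\tfrac{1}{\kappa}\tfrac{\delta^2}{1-\delta}\|\yt\|^2$. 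The net per-iteration decrease is then at least the difference of these, which (after substituting $c_\delta=(1-\delta^2)(1-\delta)$ and simplifying) should be expressible as roughly $\tfrac{1}{\kappa}\gamma\,(1-\delta^2)\|\yt\|^2$, matching the definition $\gamma=c_\delta-\delta^2/c_\delta$ in the statement.

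Next I would convert this additive decrease into a multiplicative one. Since $\|\yt\|^2$ is comparable to $\|R_\sT\yt\|^2$ up to RIP factors (by \eqref{eq:stepI:R}, $\|R_\sT\yt\|^2\ge(1-\delta^2)\|\yt\|^2$, and $\|R_\sT\yt\|^2\le\|\yt\|^2$ trivially), the bound on the net decrease can be rewritten as a statement of the form
\begin{equation}
\|R_{\sT_{n+1}}\yt\|^2 \le \left(1-\frac{\gamma}{\kappa}\right)\|R_{\sT_n}\yt\|^2,
\end{equation}
where $\kappa\le k$ throughout. Iterating this recursion and using $1-x\le e^{-x}$ gives $\|R_\sT\yt\|^2\le \|\vy\|^2 e^{-\gamma n/k}$ after $n$ replacement iterations, and setting the right-hand side equal to the target $\epsilon_t$ and solving for $n$ produces the $\tfrac{k}{\gamma}\ln(\|\vy\|^2/\epsilon_t)$ term. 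The additive constants $k$ and the $e^{c_\delta}$ inside the logarithm account for the initial FOLS phase that builds the size-$(k+1)$ support (costing $k$ iterations) and for slack in the first-iteration constants; I would absorb these by a careful bookkeeping of the base of the recursion.

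The main obstacle I anticipate is making the per-iteration decrease argument rigorous with the correct dependence on $\kappa$ versus $k$. The subtlety is that $\kappa=|\Lt|$ changes from iteration to iteration, and the lower bound on the gain uses $\kappa$ while the upper bound on the loss uses $\zeta\ge\kappa$; I must verify that the worst case is genuinely controlled by a single factor $1-\gamma/k$ independent of the current $\kappa$, which requires checking that $\gamma/\kappa\ge\gamma/k$ is used in the correct direction and that $\gamma>0$ under the operative RIP regime (it must be positive for the bound to be meaningful, so I would confirm $c_\delta^2>\delta^2$, i.e. the same flavor of inequality appearing in Theorem~\ref{thm:noiseless_delta}). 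A secondary technical point is justifying that the residual I am decreasing, $\|R_\sT\yt\|^2$, correctly tracks the algorithm's actual residual $\|R_\sT\vy\|^2$; this follows from $R_\sT\vy=R_\sT\yt$ (used already in the proof of Theorem~\ref{thm:noiseless_delta}), so the two differ only by the energy on the already-recovered atoms, which I would handle at the outset.
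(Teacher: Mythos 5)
Your proposal matches the paper's proof: the same gain/loss bounds from Lemma~\ref{lem:c_delta} and Lemma~\ref{lem:min_RA_aj} are combined into a per-iteration multiplicative contraction of the residual, iterated geometrically, with the initial OLS/FOLS phase supplying the extra $k$ iterations and the $e^{c_\delta}$ inside the logarithm (the latter being precisely the $(1-c_\delta/k)^k\approx e^{-c_\delta}$ decay accumulated over those first $k$ iterations, not merely ``slack''). One bookkeeping detail to obtain the clean factor $\gamma/\kappa$ rather than $(1-\delta^2)\gamma/\kappa$: keep the addition gain in terms of $\|R_\sT\yt\|^2$ directly (as \eqref{eq:stepV:up_1} already provides) and use \eqref{eq:stepI:R} only to convert the removal loss ${\delta^2\over \kappa(1-\delta)}\|\yt\|^2$ into ${\delta^2\over \kappa c_\delta}\|R_\sT\yt\|^2$, which is exactly how the paper arrives at the recursion with factor $1-\gamma/k$.
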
 
Note that this is a worst case analysis. Empirically, only several iterations are required for convergence in addition to the OLS \rg{ ones (used to get an initial $k$-sparse solution for OLSR).} 

\begin{proof}
\rg{Notice that the first $k$ iterations of OLSR are actually the ones of OLS (which provides its initial estimate). Therefore, we denote by $\sT_n$ the estimated support of OLSR at its $n$th iteration, where $1 \le n \le k$ actually referes to the OLS iterations.
To analyze the convergence speed of OLSR, we first provide a bound on the error at the first $k$ iterations performed by OLS  and then compute the additional error decrease that happens at the subsequent iterations of OLSR. }

\rg{Let $\va$ be the atom added by OLS at iteration $n+1$. By recalling \eqref{eq:coll_add:resid} in Lemma \ref{lem:intro:bckgrnd:coladderr} and then applying Lemma \ref{lem:c_delta}, we have that the change in the residual error is given by 
\begin{eqnarray}
\|\Rn\vy\|^2-\|\Rnn\vy\|^2 = \ip{\vv{\Rn\va},\vy}^2 \geq \|\Rn\vy\|^2{c_\delta\over \kappa},
\end{eqnarray}
By reorganizing the terms, we have the following relationship between the residuals in subsequent iterations of OLS:
	$$\|\Rnn\vy\|^2\leq \left(1-{c_\delta\over\kappa}\right)\|\Rn\vy\|^2.$$
As we can replace $1/\kappa$ with $1/k$ to get a more restrictive bound and using the fact that $R_0 \vy = \vy$, we get 
	$$ \|R_{\sT_{k}}\vy\|^2 \le \left(1-{c_\delta/ k}\right)^k \|\vy\|^2. $$
For large enough $k$ this converges to 	$\|R_{\sT_{n=k}}\vy\|^2 \le e^{-c_\delta} \|\vy\|^2$. }

	Turning to the ``replacement part'' of the OLSR algorithm, assume that $tk$ ($t>0$) additional iterations were performed at this stage. \rg{From Lemma \ref{lem:min_RA_aj}, we have that at each step we take out of the support an atom with at most ${\delta^2 \over \kappa(1-\delta)} \|\yt\|^2$ contribution to the residual error. Using \eqref{eq:stepI:R} we bound this error increase  by
	${\delta^2 \over \kappa c_\delta} \|\Rn\yt\|^2$. On the other hand, from \eqref{eq:stepV:up_1}, we have that the atom added to the support reduces the error by  ${1 \over \kappa} c_\delta\|\Rn\yt\|^2$. Thus, we get that the error change obeys
\begin{eqnarray}
 \|\Rn\yt\|^2 - \|\Rnn\yt\|^2 \ge  {c_\delta - \delta^2/c_\delta \over k}  \|\Rn\yt\|^2.
\end{eqnarray}	
This leads to
	$\|\Rnn\yt\|^2 \le \left(1 -  {c_\delta - \delta^2/c_\delta \over k} \right) \|\Rn\yt\|^2$. Thus,}
	\begin{eqnarray}
	\|R_{\sT_{tk+k}} \vy \|^2 &\leq & \left(1-{\gamma\over k}\right)^{tk} \|R_{\sT_{k}}\vy\|^2  \\ \nonumber & \leq & \left(1-{\gamma\over k}\right)^{tk} e^{-c_\delta}\|\vy\|^2,
	\end{eqnarray}
where $\gamma =  c_\delta - \delta^2/c_\delta$ (note that $\delta < 0.445$ yields $\gamma>0$). Notice that for the estimation error to be less than $\epsilon_t$ we need $t \geq {\gamma^{-1}} \ln({\|\vy\|^2/ \epsilon_t e^{c_\delta}})$. Thus,  $(1+t)k$ is an upper bound on the number of iterations.
\end{proof}


We turn now to deal with the case that the measurements are corrupted by an additive white Gaussian noise (AWGN). To this end, we use the following notation $\vy_0 = \mM\vx$,\ \ $\vy=\vy_0+\vw$,\ \ $\vw\sim \NNN(0,\sigma^2\mI)$, where $\sigma^2$ is the noise variance.

\begin{theorem} \label{thm:awgn}
	Given measurements corrupted by AWGN with variance $\sigma^2$, and a parameter $a\geq 0$, perfect support reconstruction is achieved by OLSR and IOLSR with probability exceeding  $1-(\sqrt{\pi(1+a)\log n}n^a)^{-1}$ if 
		\begin{equation}
		{\|\vy_0\|^2\over \sigma^2k } \geq  { (1+\sqrt{1-\delta})^2(2(1+a)\log n) \over ((1-\delta)(1-\delta^2)  - \delta)^2}\nonumber .
		\end{equation}  
\end{theorem}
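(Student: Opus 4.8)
The plan is to lift the deterministic comparison underlying the proof of Theorem~\ref{thm:noiseless_delta} to the noisy regime by splitting each selection metric into a signal part and a noise part. Writing $\vy=\vy_0+\vw$ with $\vy_0=\mM\vx$ and using $R_\sT\vy_0=R_\sT\yt$ (since $\vy_0-\yt\in\mathrm{span}\{\mM_\sT\}$), every metric splits as $\ip{\vv{R_\sT\di},\vy}=\ip{\vv{R_\sT\di},\vy_0}+\ip{\vv{R_\sT\di},\vw}$. The first term is precisely the object bounded in the noiseless analysis, so I would reuse Lemma~\ref{lem:c_delta} (together with \eqref{eq:stepI:R}) to lower bound the signal part of some correct atom $i\in\Lt$ by $\ip{\vv{R_\sT\di},\vy_0}^2\ge \frac{(1-\delta^2)^2(1-\delta)}{\kappa}\|\yt\|^2$, and Lemma~\ref{lem:min_RA_aj} to upper bound the signal part of the removed wrong atom $j\in\Tt$ by $\frac{1}{\kappa}\frac{\delta^2}{1-\delta}\|\yt\|^2$, using $\zeta\ge\kappa$ at a size-$k$ support exactly as in the noiseless proof.

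Next I would control the noise part. For any fixed unit vector $\vu$ we have $\ip{\vu,\vw}\sim\NNN(0,\sigma^2)$, and the Gaussian tail bound gives $P(|\ip{\vu,\vw}|>\sigma t)\le e^{-t^2/2}/\sqrt{\pi t^2/2}$. Setting $t=\sqrt{2(1+a)\log n}$ turns the right-hand side into $(\sqrt{\pi(1+a)\log n}\,n^{1+a})^{-1}$, and a union bound over the $n$ atoms yields exactly the failure probability $(\sqrt{\pi(1+a)\log n}\,n^{a})^{-1}$ appearing in the theorem. On the complementary (good) event, every noise contribution is controlled by $\eta:=\sigma\sqrt{2(1+a)\log n}$.

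With these pieces the noisy argument parallels the noiseless one. On the good event the best correct atom has metric at least $(\sqrt{\mathrm{LB}}-\eta)^2$ while every wrong atom has metric at most $(\sqrt{\mathrm{UB}}+\eta)^2$, where $\mathrm{LB},\mathrm{UB}$ denote the signal bounds above; hence a beneficial replacement—contradicting the assumption that the algorithm stopped with $\Tt\ne\emptyset$—is forced once $\sqrt{\mathrm{LB}}-\sqrt{\mathrm{UB}}>2\eta$. Substituting the two bounds reduces this to $\frac{\|\yt\|}{\sqrt{\kappa}}\cdot\frac{(1-\delta)(1-\delta^2)-\delta}{\sqrt{1-\delta}}>2\eta$; squaring, inserting $\eta$, and expressing the per-iteration energy $\|\yt\|^2/\kappa$ through the global $\|\vy_0\|^2/k$ via the RIP produces the stated threshold, with $(1+\sqrt{1-\delta})^2$ collecting the residual RIP constants. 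The IOLSR case is identical after swapping the roles of $\sT$ and $\sT'=\sTt$, exactly as in Theorem~\ref{thm:noiseless_delta}, so both algorithms share the same condition.

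I expect the main obstacle to be making the noise bound genuinely uniform: the direction $\vv{R_\sT\di}$ depends on the data-dependent support $\sT$, so $\ip{\vv{R_\sT\di},\vw}$ is not a single fixed Gaussian, and a rigorous union bound must either condition on the (correct) support reached so far or range over all admissible projected directions while still collapsing to the clean $n$-term bound that gives the stated probability. The second, more bookkeeping-heavy point is fixing the exact RIP constant: the honest per-comparison requirement is on $\|\yt\|^2/\kappa$ at the stopping support, where $\kappa$ may be small, so relating it to $\|\vy_0\|^2/k$—and verifying that $\delta\le 0.445$ keeps $(1-\delta)(1-\delta^2)-\delta>0$ so that the signal gap stays positive—is the delicate step that determines the precise form of the bound.
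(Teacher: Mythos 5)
Your proposal follows essentially the same route as the paper: decompose each selection metric into a signal part (bounded by Lemma~\ref{lem:c_delta} with \eqref{eq:stepI:R} from below and by Lemma~\ref{lem:min_RA_aj} from above, exactly as in Theorem~\ref{thm:noiseless_delta}) plus a noise part controlled uniformly at level $\sigma\sqrt{2(1+a)\log n}$ via the Gaussian tail/union bound, which is precisely the confidence interval of \eqref{eq:DS:Pr} that the paper cites. The only substantive difference is bookkeeping: the paper injects the noise into the wrong-atom bound through $\|P_\sT\vw\|$ inside the proof of Lemma~\ref{lem:min_RA_aj} (yielding the asymmetric $\|\vw_\sT\|+\sqrt{k(1-\delta)}\,|w_i|$ term and hence the $(1+\sqrt{1-\delta})^2$ factor), it reduces to the worst case $\kappa=k$ up front rather than tracking $\|\yt\|^2/\kappa$, and the uniformity issue you flag for the data-dependent directions $\vv{R_\sT\di}$ is likewise left implicit in the paper's invocation of \eqref{eq:DS:Pr}.
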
 
\begin{proof}
The proof is very similar to the proof of Theorem~\ref{thm:noiseless_delta}. The only difference is that the lower and upper bounds used change due to the noise. Therefore, we describe here the changes that are needed to be done compared to the proof of Theorem~\ref{thm:noiseless_delta}.  Again we assume that $\tilde{T} \ne \emptyset$. 	We describe the proof for $\kappa=k$ \rg{(i.e., we have completely erred in the selection of the support)}, since this is the most restrictive case. The proof can be easily generalized and verified for the case $\kappa < k$. \rg{Thus, we assume} $\yt = \vy$. \rg{Throughout this proof, the following notation is used:}
	\begin{equation}
	\vw_\sT = {P_\sT\vw  \over \|\vy_0\|},\ \ w_i = {\ip{\vv{R_\sT\di},\vw}\over \|\vy_0\|}
	\label{eq:w_a_w_i}.
	\end{equation}
\rg{Consider the proof of Lemma \ref{lem:min_RA_aj}. Since we assume  $\yt = \vy$, then all the steps in this proof holds also for the case here except of the last step that bounds $\|P_\sT\vy\|$. Substituting it with}
	\begin{eqnarray}
	\|P_\sT\vy\| = \|P_\sT\vy_0 + P_\sT\vw\| &\leq & \|P_\sT\vy_0\|+\|P_\sT\vw\|  \\ \nonumber
	&\leq & (\delta +\|\vw_\sT\|)\|\vy_0\|,
	\end{eqnarray}	
	\rg{leads to the following 
	noisy form of the bound in Lemma \ref{lem:min_RA_aj}: }
	\begin{equation}
	{\delta +\|\vw_\sT\|\over \sqrt{k(1-\delta)}}\|\vy_0\| \geq \min_{j\in \sT}\left|\ip{\vv{R_\sT\dj},\vy}\right|
	\label{eq:4949494}.
	\end{equation}\vs

\rg{We now turn to bound $\left|\ip{\vv{R_\sT\di},\vy}\right|$ from below, where $i$ is an index of an atom in the true support (that has not been selected). First, notice that from the triangle inequality
\begin{eqnarray}
\left|\ip{\vv{R_\sT\di},\vy}\right| &=& \left|\ip{\vv{R_\sT\di},\vy_0 + \vw}\right| \\ &\ge &   \left|\ip{\vv{R_\sT\di},\vy_0 }\right| - \left|\ip{\vv{R_\sT\di}, \vw}\right|. \nonumber
\end{eqnarray}
By using \eqref{eq:w_a_w_i}  and the facts that $R_\sT$ is a projection ($R_\sT = R_\sT^2$, $R_\sT = R_\sT^T$) and $\|\vv{R_\sT\di}\| = 1$ followed by applying Lemma \ref{lem:c_delta} and \eqref{eq:stepI:R}, we get  }
	\begin{eqnarray}
	\label{eq:289292}
\nonumber	\left|\ip{\vv{R_\sT\di},\vy}\right| &\geq & \|R_\sT \vy_0\||\cos \alpha_i|  - \|\vy_0\||w_i| \\  & \hspace{-0.6in}\geq & \hspace{-0.4in} \|\vy_0\|\left(\sqrt{c_\delta (1-\delta^2)\over k} -|w_i|\right),
	\end{eqnarray}
Having these updated version of the bounds (of the lemmas) for the noisy case, we can repeat the same steps of Theorem~\ref{thm:noiseless_delta} for both \rg{OLSR and IOLSR}, where the only difference is that we plug these inequalities instead of the ones for the noiseless case. As in the proof of Theorem \ref{thm:noiseless_delta}, we aim at contradicting the assumption that the methods converged with $\di$ out of the estimated support. To this end, combine \eqref{eq:4949494} and \eqref{eq:289292} and replace $c_\delta$ with its value. \rg{This leads to the condition}
	\begin{equation}
	(1-\delta)(1-\delta^2)  - \delta \geq \|\vw_\sT\| + \sqrt{k(1-\delta)}|w_i|.
	\label{eq:AWGN:22332}
	\end{equation}
As a sanity check notice that if we set $\vw = 0$, we get back to the condition of the noiseless case.

Note that $w_i$ and $\vw_\sT$ reside in sub-spaces orthogonal to each other and that $\vw_\sT$ is with dimension $k$ whereas $w_i$ is of dimension 1.	Therefore, we can use the following confidence interval developed in \cite{DS_p_bigger_than_n}:
	\begin{eqnarray}
		\label{eq:DS:Pr}
&&   \hspace{-0.7in} 	Pr\left(\sup_{\rg{\dj, 1\le j \le n}}\left|\ip{\rg{\dj},\vw}\right| > \sigma \sqrt{2(1+a)\log n}  \right)  \\ \nonumber && \hspace{0.5in} \leq \left(\sqrt{\pi(1+a)\log n}n^a\right)^{-1}.
	\end{eqnarray}
Thus, with probability exceeding $1- \left(\sqrt{\pi(1+a)\log n}n^a\right)^{-1}$,
 we can bound the right hand side of \eqref{eq:AWGN:22332} by
	\begin{align}
	&\|\vw_\sT\| + \sqrt{k(1-\delta)}|w_i|  \leq{ \sqrt k (1+\sqrt{1-\delta}) \sigma \sqrt{2(1+a)\log n}\over \|\vy_0\|}.
	\label{eq:AWGN:22333}
	\end{align}
	Combining \eqref{eq:AWGN:22332} and \eqref{eq:AWGN:22333} concludes the proof. 
\end{proof}
The following is a consequence of Theorem \ref{thm:awgn}: 
\begin{corollary} \label{cor:1}
	If  $\sigma\leq c_1\|\vy_{0}\|/\sqrt k$ holds as in Theorem \ref{thm:awgn},  the error of the estimation $\xh$ of $\vx$ by OLSR and IOLSR is bounded with probability exceeding $1-\left(\sqrt{\pi(1+a)\log n}n^a\right)^{-1}$ by 
	\begin{equation}
	\|\vx-\xh\|\leq {\sqrt{2(1+a)\log n} \over 1-\delta}\sqrt k \sigma.
	\end{equation}
	
	.
\end{corollary}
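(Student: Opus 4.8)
The plan is to leverage Theorem~\ref{thm:awgn} to reduce the estimation-error bound to a clean oracle least-squares calculation. Under the stated hypothesis, Theorem~\ref{thm:awgn} guarantees that OLSR and IOLSR recover the true support exactly, $\sT=\sL$, on an event of probability at least $1-(\sqrt{\pi(1+a)\log n}\,n^a)^{-1}$. Conditioned on this event the final reconstruction uses the correct columns, so it suffices to bound the error of the least-squares estimate restricted to the true support.

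First I would set $\mA=\mM_\sL$ and note that, since $\vx$ is supported on $\sL$, we have $\vy_0=\mA\vx_\sL$ and hence $\vx_\sL=\mA^\dagger\vy_0$, whereas $\xh_\sL=\mA^\dagger\vy=\mA^\dagger(\vy_0+\vw)$. As both vectors vanish off $\sL$, the error collapses to $\|\vx-\xh\|=\|\mA^\dagger\vw\|=\|(\mA^T\mA)^{-1}\mA^T\vw\|$. Submultiplicativity then gives $\|\vx-\xh\|\le\|(\mA^T\mA)^{-1}\|\,\|\mA^T\vw\|$, and since $|\sL|=k$, the RIP of order $2k$ (Definition~\ref{def:RIP}) confines the spectrum of $\mA^T\mA$ to $[1-\delta,1+\delta]$, so $\|(\mA^T\mA)^{-1}\|\le(1-\delta)^{-1}$.

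It remains to control $\|\mA^T\vw\|^2=\sum_{i\in\sL}\ip{\di,\vw}^2$. Here I would invoke the uniform deviation bound \eqref{eq:DS:Pr}: on its event, $|\ip{\di,\vw}|\le\sigma\sqrt{2(1+a)\log n}$ holds for every atom simultaneously, whence $\|\mA^T\vw\|\le\sqrt{k}\,\sigma\sqrt{2(1+a)\log n}$. Combining the three estimates yields exactly $\|\vx-\xh\|\le\frac{\sqrt{2(1+a)\log n}}{1-\delta}\sqrt{k}\,\sigma$, which is the claimed bound.

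The one point requiring care is the probability bookkeeping: the corollary advertises the \emph{same} confidence level as Theorem~\ref{thm:awgn}, not a union-bounded degradation of it. The key observation is that both the support-recovery conclusion of Theorem~\ref{thm:awgn} and the per-atom noise bound used above are consequences of the single high-probability event in \eqref{eq:DS:Pr}, namely that $\sup_{1\le j\le n}|\ip{\dj,\vw}|\le\sigma\sqrt{2(1+a)\log n}$. Working on this one event throughout keeps the failure probability at $(\sqrt{\pi(1+a)\log n}\,n^a)^{-1}$, so no additional union bound is incurred. Everything else is routine algebra.
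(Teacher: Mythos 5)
Your proposal is correct and follows essentially the same route as the paper: invoke Theorem~\ref{thm:awgn} for exact support recovery, reduce to the oracle least-squares error $\|(\mA^T\mA)^{-1}\mA^T\vw\|$, and bound the two factors via the RIP eigenvalue bound and the uniform deviation inequality \eqref{eq:DS:Pr}. Your explicit remark that the support-recovery event and the per-atom noise event coincide (so no union-bound degradation of the confidence level occurs) is a point the paper leaves implicit, and you also silently correct the paper's typo of $\mA\vw$ for $\mA^T\vw$; otherwise the arguments are identical.
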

\begin{proof}
	Assuming the conditions of Theorem \ref{thm:awgn} are satisfied, \rg{implies that we get} a perfect support reconstruction. Let $\mA = \mM_L$ be the vectors comprising the true support, then:
	\begin{align}
	\|\vx-\xh\| &= \|\vx - (\mA^T\mA)^{-1}\mA^T(\mA\vx+\vw)\| \nonumber\\
	&= \|(\mA^T\mA)^{-1}\mA\vw\|  \leq \|(\mA^T\mA)^{-1}\|\|\mA\vw\| \nonumber\\
	&\leq {1\over 1-\delta} \sigma\sqrt{2k(1+a)\log n} 
	\end{align}
	where in the first transition we write the expression for $\xh$ explicitly, the third uses a \rg{matrix-norm} inequality, and the fourth uses \eqref{eq:DS:Pr} and the bound $\min \text{eig} (\mA^T\mA) \geq 1-\delta$ due to the RIP condition on $\mM$.  
\end{proof}

Corollary \ref{cor:1} implies that under a perfect support reconstruction we have an error proportional to $\sqrt k \sigma$. \rg{Yet, if} the condition in Theorem \ref{thm:awgn} is violated (i.e., support recovery is not guaranteed) the error in the worst-case is still bounded by the following corollary:
\begin{corollary} \label{cor:2}
	The worst case error of the estimation $\xh$ of $\vx$ by OLSR and IOLSR is bounded with probability exceeding $1-\left(\sqrt{\pi(1+a)\log n}n^a\right)^{-1}$ by
	$$\|\vx-\xh\|\leq c_2 \sqrt{2k\sigma^2 (1+a)\log n},$$
	where $$c_2 = {(1+\delta^2)(1+\sqrt{1-\delta})\over (1-\delta)(1-\delta^2)-\delta}+{1\over 1-\delta}.$$
\end{corollary}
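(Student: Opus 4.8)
The plan is to bound $\|\vx-\xh\|$ by two additive contributions that will match the two summands of $c_2$: a \emph{noise} term that is present regardless of whether the support is recovered, and a \emph{signal-mismatch} term that only matters when recovery fails. Writing the returned solution as $\xh=\mM_\sT^\dagger\vy=\mM_\sT^\dagger\vy_0+\mM_\sT^\dagger\vw$, where $\sT$ (with $|\sT|\le k$) is whatever support the algorithm terminates with, the triangle inequality gives $\|\vx-\xh\|\le\|\vx-\mM_\sT^\dagger\vy_0\|+\|\mM_\sT^\dagger\vw\|$, and I would bound the two pieces separately.

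For the noise term I would use $\|\mM_\sT^\dagger\vw\|=\|(\mM_\sT^T\mM_\sT)^{-1}\mM_\sT^T\vw\|\le\|(\mM_\sT^T\mM_\sT)^{-1}\|\,\|\mM_\sT^T\vw\|$. The RIP of order $2k$ gives $\min\mathrm{eig}(\mM_\sT^T\mM_\sT)\ge 1-\delta$, hence $\|(\mM_\sT^T\mM_\sT)^{-1}\|\le 1/(1-\delta)$, exactly as in the proof of Corollary~\ref{cor:1}. On the high-probability event of \eqref{eq:DS:Pr}, each of the (at most $k$) entries of $\mM_\sT^T\vw$ obeys $|\ip{\dq,\vw}|\le\sigma\sqrt{2(1+a)\log n}$, so $\|\mM_\sT^T\vw\|\le\sigma\sqrt{2k(1+a)\log n}$. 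Combining these yields $\|\mM_\sT^\dagger\vw\|\le\frac{1}{1-\delta}\sqrt{2k\sigma^2(1+a)\log n}$, which is precisely the second summand of $c_2$ and holds with the stated probability.

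For the signal-mismatch term I would split into two cases on the same event. If the support is recovered perfectly this term vanishes and we are back in Corollary~\ref{cor:1}. If it is not recovered, then on this event the contrapositive of Theorem~\ref{thm:awgn} forces its hypothesis to fail, i.e. $\|\vy_0\|\le\frac{(1+\sqrt{1-\delta})}{(1-\delta)(1-\delta^2)-\delta}\sqrt{2k\sigma^2(1+a)\log n}$. Now $\mM(\vx-\mM_\sT^\dagger\vy_0)=\vy_0-P_\sT\vy_0=R_\sT\vy_0$, and since $\vx-\mM_\sT^\dagger\vy_0$ is supported on $\sL\cup\sT$ with $|\sL\cup\sT|\le 2k$, the lower RIP inequality inverts this to $\|\vx-\mM_\sT^\dagger\vy_0\|\le(\text{RIP constant})\,\|R_\sT\vy_0\|$, which I would in turn control by a multiple of $\|\vy_0\|$ using $\|R_\sT\vy_0\|\le\|\vy_0\|$ together with the cross-Gram bound $\|\mM_\sT^T\mM_{\sL}\|\le\delta$ and \eqref{eq:stepI:P}. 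Substituting the violated-hypothesis bound on $\|\vy_0\|$ turns this into a constant times $\sqrt{2k\sigma^2(1+a)\log n}$, the first summand of $c_2$. Adding the two terms gives the claim.

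The main obstacle I anticipate is pinning down the exact RIP-dependent constant in the signal-mismatch term so that it collapses to the stated factor $(1+\delta^2)(1+\sqrt{1-\delta})/((1-\delta)(1-\delta^2)-\delta)$ rather than a looser expression such as $(1+\sqrt{1-\delta})/\bigl(\sqrt{1-\delta}\,((1-\delta)(1-\delta^2)-\delta)\bigr)$: the naive chain $\|\vx-\mM_\sT^\dagger\vy_0\|\le\|R_\sT\vy_0\|/\sqrt{1-\delta}\le\|\vy_0\|/\sqrt{1-\delta}$ is slightly loose, so the tight $(1+\delta^2)$ prefactor must come from a more careful decomposition of $\vx-\mM_\sT^\dagger\vy_0$ in coefficient space (bounding $\|\mM_\sT^\dagger\vy_0\|$ through the cross-Gram and $\|\vx\|$ through the lower RIP on the disjoint portions of the supports). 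A secondary, purely bookkeeping point is to invoke the noise bound and the violated-hypothesis bound on one and the same event \eqref{eq:DS:Pr}, so that the final probability remains $1-(\sqrt{\pi(1+a)\log n}\,n^a)^{-1}$.
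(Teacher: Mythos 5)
Your proposal follows essentially the same route as the paper's proof: a triangle-inequality split into a noise term (bounded exactly as in Corollary~\ref{cor:1} by $\frac{1}{1-\delta}\sigma\sqrt{2k(1+a)\log n}$) and a signal-mismatch term proportional to $\|\vy_0\|$, with $\|\vy_0\|$ then eliminated via the contrapositive of Theorem~\ref{thm:awgn} on the single event \eqref{eq:DS:Pr}. The obstacle you anticipate about the exact prefactor is real but is not resolved by the paper either: its proof bounds the extra term by $(1+\delta)\|\vy_0\|+\delta(1+\delta)\|\vy_0\|=(1+\delta)^2\|\vy_0\|$ and then substitutes the violated-hypothesis bound on $\|\vy_0\|$, which yields a factor $(1+\delta)^2$ rather than the $(1+\delta^2)$ appearing in the stated $c_2$, so you should not expect a more careful decomposition to be needed to recover that factor.
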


The proof of this corollary, analyzes the worst case scenario, where we have recovered erroneously all the atoms.

\begin{proof}
	We start by bounding the error norm as in Corollary \ref{cor:1}:
	\begin{align}
	\label{aeq:thth}
&	\|\vx-\xh\| = \|\vx - (\mA^T\mA)^{-1}\mA^T(\mA\vx+\vw)\| \nonumber\\
	&\leq \|\vx\|+\| (\mA^T\mA)^{-1}\mA^T\mA\vx\| + \|(\mA^T\mA)^{-1}\mA^T\vw\|\nonumber\\
	&\leq (1+\delta)\|\vy_0\| + \delta (1+\delta) \|\vy_0\| + {1\over 1-\delta} \sigma\sqrt{2k(1+a)\log n}. 
	\end{align}
	where in the third transition we used the RIP condition and the results of Corollary \ref{cor:1}, and assumed that the selected support is entirely erroneous; thus, \eqref{eq:stepI:P} holds. Now, to \rg{get Corollary \ref{cor:2}, compare \eqref{aeq:thth} and the bound in Corollary \ref{cor:1}. Notice that here (where $\sigma > c_1\|\vy_0\| /\sqrt k$) we have an additional factor of $(1+\delta)^2\|\vy_0\|$, i.e., we have a ``step'' at $\sigma = c_1\|\vy_0\| /\sqrt k$. Thus, to provide an upper bound that covers all values of $\sigma$, we need to calculate the straight line (in $\sigma$) that goes through the point 
	$$\left({c_1\|\vy_0\|\over\sqrt k},\ (1+\delta)^2\|\vy_0\| \right),$$
	and add it to the bound of Corollary \ref{cor:1},. }
	
\end{proof} 

Corollaries \ref{cor:1} and \ref{cor:2} imply that the IOLSR and OLSR \rg{errors are} proportional, up to $\OOO(\log n)$, to the error ($ \sqrt k \sigma$) of an oracle estimator that knows the true support. These results are similar to other near-oracle bounds developed for other methods including SP and CoSaMP \cite{BenHaim09Coherence, DS_p_bigger_than_n, RIP_oracle}.

\section{Efficient least-residual techniques for the analysis model} \label{sec:anal}
We turn now to extend the OLS approach to the analysis model, and improve it further by allowing backtracking. Notice that GAP, in a similar way to OMP, relies on correlations to select what rows from $\Omega$ to remove in its iterative process, although its objective is minimizing \eqref{eq:P1}. Thus, in a similar way to what we have done with FOLS and IOLSR, we propose here an efficient technique to re-calculate \eqref{eq:P1} when elements from the cosupport are added/removed. This allows us to propose GALS that removes columns from the cosupport directly using \eqref{eq:P1} and GALSR that, in a similar way to \rg{OLSR}, replaces elements in the cosupport based on the target objective \eqref{eq:P1}.

\subsection{Preliminaries for the analysis model algorithms}  
We start with some preliminary lemmas that will aid us in the derivation of two new algorithms for the analysis model.      
%
%
%
The first provides a variant of the restricted isometry property (RIP) for the analysis case \cite{giryes2014greedy}. We use it to ensure the existence of a solution to the analysis minimization problem in Definition~\ref{df:relaxedanal}. 
\begin{df}[$\Omega$-RIP] \label{def:O-RIP}  
	A matrix $\mM$ is said to satisfy the \ORIP of order $s$ with constant $\delta^\Omega_s$, if for any $\vx$ such that $\mO\vx$ has more than $s$ zeros (i.e. $\vx$ is orthogonal to at-least $s$ rows from $\mO$) 
	$$ (1-\delta^\Omega_s)\norms{\vx}\leq \norms{\mM\vx}\leq(1+\delta^\Omega_s)\norms{\vx}. $$
\end{df}
A large range of sampling operators satisfy the \ORIP with very high probability, e.g., random Gaussian or sampled Fourier matrices with similar probabilities to the regular RIP \cite{giryes2014greedy}. 
The definition of the \ORIP leads us to the following lemma.
 
\begin{lemma} \label{lem:B_S} 
Given that $\delta_s^\Omega<1$, the matrix 
\begin{eqnarray}
\label{eq:B_lambda}
\mB_\sLam = \mM^T\mM+\mO_\sLam^T\mO_\sLam,
\end{eqnarray}
is invertible whenever $|\sLam|\geq s$.
\end{lemma}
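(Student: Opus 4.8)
The plan is to prove invertibility by showing that $\mB_\sLam$ is positive definite, which is natural because it is a sum of two Gram matrices. I would begin by writing out the associated quadratic form: for any $\vx \in \RR^n$,
\begin{equation}
\vx^T \mB_\sLam \vx = \norms{\mM\vx} + \norms{\OL\vx} \geq 0,\nonumber
\end{equation}
so $\mB_\sLam$ is automatically positive semidefinite. Invertibility then reduces to showing that this quadratic form vanishes only at $\vx = 0$, i.e. that $\mB_\sLam$ has no nontrivial null vector.

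Next I would argue by contradiction. Suppose $\vx \neq 0$ satisfies $\vx^T \mB_\sLam \vx = 0$. Since both $\norms{\mM\vx}$ and $\norms{\OL\vx}$ are nonnegative and sum to zero, each must vanish separately; in particular $\OL\vx = 0$ and $\mM\vx = 0$. The condition $\OL\vx = 0$ says that the entries of $\mO\vx$ indexed by $\sLam$ are all zero, so $\mO\vx$ has at least $|\sLam| \geq s$ zero entries. This is precisely the hypothesis under which the $\Omega$-RIP of order $s$ applies to $\vx$.

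Finally I would invoke Definition~\ref{def:O-RIP}: the lower \ORIP bound gives $(1-\deltaO_s)\norms{\vx} \leq \norms{\mM\vx} = 0$. Since by assumption $\deltaO_s < 1$, the factor $1 - \deltaO_s$ is strictly positive, forcing $\norms{\vx} \leq 0$ and hence $\vx = 0$, contradicting $\vx \neq 0$. Therefore the quadratic form is strictly positive for all nonzero $\vx$, so $\mB_\sLam \succ 0$ and is invertible.

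This argument is short, and there is no real computational obstacle; the only point that requires a moment of care is the translation step, namely recognizing that $\OL\vx = 0$ together with $|\sLam| \geq s$ is exactly what is needed to bring $\vx$ into the regime covered by the \ORIP (i.e. that $\mO\vx$ has at least $s$ zeros). Once that connection is made, the strict positivity of $1 - \deltaO_s$ does all the work.
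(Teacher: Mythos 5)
Your proposal is correct and follows essentially the same route as the paper: both argue by contradiction that a nonzero null vector of $\mB_\sLam$ would force $\mM\vx=0$ and $\OL\vx=0$ simultaneously, which the lower \ORIP bound with $\deltaO_s<1$ rules out. Your write-up is slightly more explicit about positive semidefiniteness and about why $|\sLam|\geq s$ places $\vx$ in the regime covered by the \ORIP, but the core argument is identical.
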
    
\begin{proof} 
Assume by contradiction that $\exists \vx\neq 0$ such that
	$$\vx^T\mB_\sLam\vx = \norms{\left[\begin{array}{c}\mM\\ \mO_S \end{array}\right]\vx    }=0.$$
This implies that both $\mM \vx = 0$ and $\mO_S=0$. Yet, according to the \ORIP,  if $\mO_\sLam\vx=0$ and $\delta_s^\Omega<1$ then $\mM \vx \neq 0$. 	
\end{proof}

Notice that here and below, we just use the lower inequality in the RIP condition, which is basically equivalent to requiring that $\mM$ does not have $s$-cosparse vectors in its null-space. This is a very mild assumption that we take in this work as it is clear that if $\mM\vx =0$ it is impossible to recover $\vx$. 

\begin{lemma} \label{lem:P1_sol} 
If $\delta^\Omega_{|\sLam|}<1$, then the solution to \eqref{eq:P1}  satisfies 
	\begin{equation} \label{eq:P1_sol}
	 \MMM{cc}{\mB_\sLam & \mM^T \\ \mM & \Zer}  \MMM{c}{\xh\\ \vz} = \MMM{c}{\mM^T \vy \\ \vy },
	\end{equation} 	
	where $\mB_\sLam$ is defined in \eqref{eq:B_lambda} in Lemma \ref{lem:B_S}.
\end{lemma}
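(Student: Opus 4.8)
The plan is to treat \eqref{eq:P1} as an equality-constrained convex quadratic program and write down its Lagrangian (KKT) optimality conditions. First I would rewrite the zero-residual constraint $\norms{\vy - \mM\vx} = 0$ in its equivalent linear form $\mM\vx = \vy$, so that \eqref{eq:P1} becomes: minimize the convex quadratic $\norms{\mO_\sLam\vx}$ over the affine feasible set $\{\vx : \mM\vx = \vy\}$. Because the constraints are affine, the stationarity-plus-feasibility conditions are both necessary and sufficient for a global minimizer, so it suffices to exhibit a multiplier $\vz$ for which $\xh$ solves those conditions and to check that the resulting system is exactly \eqref{eq:P1_sol}.

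The key trick that produces $\mB_\sLam$ (rather than the bare $\mO_\sLam^T\mO_\sLam$) is to observe that $\norms{\mM\vx - \vy}$ vanishes identically on the feasible set. Hence minimizing $\norms{\mO_\sLam\vx}$ over $\{\mM\vx = \vy\}$ is equivalent to minimizing the augmented objective $\norms{\mO_\sLam\vx} + \norms{\mM\vx - \vy} = \vx^T\mB_\sLam\vx - 2(\mM^T\vy)^T\vx + \norms{\vy}$ over the same set, with $\mB_\sLam = \mM^T\mM + \mO_\sLam^T\mO_\sLam$ as in \eqref{eq:B_lambda}. Forming the Lagrangian $\mathcal{L}(\vx,\vz) = \vx^T\mB_\sLam\vx - 2(\mM^T\vy)^T\vx + \norms{\vy} + 2\vz^T(\mM\vx - \vy)$ and setting $\nabla_\vx\mathcal{L} = 0$ yields $\mB_\sLam\xh + \mM^T\vz = \mM^T\vy$, which combined with primal feasibility $\mM\xh = \vy$ assembles precisely into the block system \eqref{eq:P1_sol}.

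To close the argument I would invoke Lemma~\ref{lem:B_S}: under $\delta^\Omega_{|\sLam|} < 1$ the matrix $\mB_\sLam$ is invertible, hence positive definite (being a sum of two positive semidefinite matrices). This makes the augmented objective strictly convex and coercive, so a unique minimizer is attained on the (nonempty, by the solvability of the constraint) feasible affine set, and it guarantees that the saddle-point matrix in \eqref{eq:P1_sol} is nonsingular so that $\vz$ is well defined. I would also note that the $\mB_\sLam$-form \eqref{eq:P1_sol} is equivalent to the naive stationarity system $\mO_\sLam^T\mO_\sLam\xh + \mM^T\vz = 0$ obtained from the un-augmented Lagrangian, via $\mM^T\mM\xh = \mM^T\vy$; the vanishing term therefore loses no information and serves only to render the top-left block invertible.

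The main obstacle I anticipate is not the differentiation, which is routine, but the careful justification of existence and uniqueness of both the minimizer and the multiplier. One must argue that positive definiteness of $\mB_\sLam$ upgrades the problem from merely convex (where $\mO_\sLam^T\mO_\sLam$ may be singular and the minimizer non-unique) to strictly convex on the feasible set, and that the feasible set is nonempty. This is exactly the point at which the \ORIP hypothesis enters, through Lemma~\ref{lem:B_S}.
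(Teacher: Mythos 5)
Your proof is correct, but it takes a genuinely different route from the paper's. You derive \eqref{eq:P1_sol} as the KKT system of the equality-constrained quadratic program, after augmenting the objective with the term $\norms{\mM\vx-\vy}$ (which vanishes on the feasible set) so that the Hessian becomes $\mB_\sLam$; this gives \emph{necessity} of the system at the minimizer directly, and sufficiency and uniqueness follow from strict convexity of the augmented objective, which you correctly trace back to Lemma~\ref{lem:B_S}. The paper instead argues in the \emph{sufficiency} direction: it takes a feasible $\xh$ satisfying \eqref{eq:P1_sol}, expands $\norms{\mO_\sLam\xh_2}$ for any other feasible $\xh_2$ around $\xh$, and shows the cross term $2(\xh_2-\xh)^T\mO_\sLam^T\mO_\sLam\xh$ vanishes by rewriting it as $2(\xh_2-\xh)^T\mB_\sLam\xh = 2(\xh_2-\xh)^T\mM^T(\vy-\vz)=0$ (using feasibility twice), with strictness of the resulting inequality coming from the \ORIP exactly as in your strict-convexity step. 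The two arguments hinge on the same algebraic observation --- that $\mM^T\mM\vx$ can be absorbed on the affine feasible set, which is what manufactures $\mB_\sLam$ out of $\mO_\sLam^T\mO_\sLam$ --- but yours buys a cleaner justification of the lemma as literally stated (the minimizer \emph{must} satisfy the system), whereas the paper's verification argument implicitly relies on solvability of the block system, addressed only later in Lemma~\ref{lem:P1_xh}. One small caveat: nonsingularity of the full saddle-point matrix (hence uniqueness of $\vz$) additionally requires $\mM$ to have full row rank; this is implicit throughout the paper (e.g., the inversion of $\mM\BL\mM^T$ in Lemma~\ref{lem:P1_xh}) and is not needed for the lemma's claim, which only asserts existence of some multiplier $\vz$.
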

\begin{proof}
	Let $\xh$ be a solution to $\mM\xh=\vy$ satisfying \eqref{eq:P1_sol}. We will demonstrate that $\xh$ has the minimal $\norms{\mO_{\sLam}\xh}$, implying that it is the solution to \eqref{eq:P1}. To this end, let $\xh_2\neq\xh$ be another possible solution (i.e. $\mM\xh_2=\vy$). Then, by substituting $\xh_2=\xh_2-\xh+\xh$, we have
	\begin{eqnarray}
	\label{eq:mo_Lam_xh2_ineq}
\norms{\mO_{\sLam}\xh_2}=\norms{\mO(\xh_2-\xh)}
+\norms{\mO\xh} \\ \nonumber +2(\xh_2-\xh)^T\mO_\sLam^T\mO_\sLam\xh.
	\end{eqnarray}
By the assumption that $(\xh_2-\xh)^T\mM^T=0$ and using the relationship defined in \eqref{eq:P1_sol}, we have that $2(\xh_2-\xh)^T\mO_\sLam^T\mO_\sLam\xh = 2(\xh_2-\xh)^T\mB_\sLam\xh = 2(\xh_2-\xh)^T\mM^T(\vy-\vz) =0$. Combining this with \eqref{eq:mo_Lam_xh2_ineq}, we have that 
 	\begin{align}
	\norms{\mO_{\sLam}\xh_2}&= \norms{\mO(\xh_2-\xh)}+\norms{\mO\xh} > \norms{\mO\xh},
	\end{align}
To justify the strict inequality we need to prove that $\mO(\xh_2-\xh) \ne 0$. If this does not hold then $\mO_\sLam(\xh_2-\xh) =0$ for any $\sLam$. Thus, from the RIP condition, we have $\mM(\xh_2-\xh) \ne 0$, which contradicts the fact that $\mM\xh_2$ also satisfies $\mM\xh_2 = \vy$. 
This completes the proof.
\end{proof}
\begin{lemma} \label{lem:P1_xh}
	Let $\delta^\Omega_{|\sLam|}<1$ and define $\CL=(\mM\BL\mM^T)^{-1}$. Then the value for $\xh$ in \eqref{eq:P1} can be written as 
	\begin{equation}
	\label{eq:axh}
	\xh =  \BL \mM^T \CL \vy.
	\end{equation} 
	
\end{lemma}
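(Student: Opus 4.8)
The plan is to solve the linear system from Lemma~\ref{lem:P1_sol} by block elimination and read off $\xh$. Since $\delta^\Omega_{|\sLam|}<1$, Lemma~\ref{lem:B_S} guarantees that $\mB_\sLam$ is invertible, so the first block row $\mB_\sLam\xh + \mM^T\vz = \mM^T\vy$ rearranges to $\xh = \BL\mM^T(\vy-\vz)$. Substituting this into the second block row $\mM\xh = \vy$ yields $\mM\BL\mM^T(\vy-\vz)=\vy$, which is the equation I would solve for the multiplier $\vz$.

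The crux is the invertibility of $\mM\BL\mM^T$, which is exactly what is required for $\CL$ to be well defined. First I would observe that $\mB_\sLam=\mM^T\mM+\mO_\sLam^T\mO_\sLam$ is a sum of positive semidefinite matrices that is invertible, hence positive definite, so $\BL\succ 0$. Then $\vu^T\mM\BL\mM^T\vu = (\mM^T\vu)^T\BL(\mM^T\vu)$ vanishes only when $\mM^T\vu=0$; using that $\mM$ has full row rank (the natural assumption on the sensing matrix, and the condition for $\mM\vx=\vy$ to admit a solution for the given $\vy$), this forces $\vu=0$. Therefore $\mM\BL\mM^T\succ 0$ and $\CL=(\mM\BL\mM^T)^{-1}$ exists. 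I expect this to be the main obstacle: the rest is routine linear algebra, but without full row rank of $\mM$ the quantity $\CL$ would not be defined and one would have to restrict attention to $\vy\in\mathrm{range}(\mM)$.

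With $\CL$ in hand, the relation $\mM\BL\mM^T(\vy-\vz)=\vy$ gives $\vy-\vz=\CL\vy$, i.e. $\vz=(\mI-\CL)\vy$. Back-substituting into $\xh=\BL\mM^T(\vy-\vz)$ then produces the claimed formula $\xh=\BL\mM^T\CL\vy$.

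Finally, I would close the argument by appealing to uniqueness: the proof of Lemma~\ref{lem:P1_sol} shows that any vector satisfying the system is the unique minimizer of \eqref{eq:P1}, so it suffices to exhibit one solution. As a sanity check I would verify directly that the pair $(\xh,\vz)=(\BL\mM^T\CL\vy,\,(\mI-\CL)\vy)$ satisfies both block rows: the second gives $\mM\xh=\mM\BL\mM^T\CL\vy=\CL^{-1}\CL\vy=\vy$, and the first gives $\mB_\sLam\xh+\mM^T\vz = \mM^T\CL\vy+\mM^T(\mI-\CL)\vy=\mM^T\vy$, using $\mB_\sLam\BL=\mI$.
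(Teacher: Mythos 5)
Your proposal is correct and follows essentially the same route as the paper: both solve the saddle-point system of Lemma~\ref{lem:P1_sol} via the Schur complement $\mM\BL\mM^T=\CL^{-1}$, the paper by writing out the block inverse explicitly and you by block elimination of $\vz$, which is the same computation. Your additional remark justifying the invertibility of $\mM\BL\mM^T$ (via $\BL\succ 0$ and full row rank of $\mM$) addresses a point the paper leaves implicit in the definition of $\CL$, and is a welcome clarification.
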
 
\begin{proof} 
	We start by inverting the matrix in Lemma \ref{lem:P1_sol}. From the block version of the matrix inversion lemma, we have
	\begin{align}
\label{eq:B_lambda_MTM_block_inverse}
	&\MMM{cc}{\mB_\sLam & \mM^T \\ \mM & \Zer}^{-1}  \\ \nonumber & \hspace{0.3in} = \MMM{cc}{ \BL - \BL \mM^T \CL \mM \BL  & \BL \mM^T \CL  \\ \CL \mM \BL  & -\CL} .
	\end{align}
Plugging \eqref{eq:B_lambda_MTM_block_inverse} into \eqref{eq:P1_sol} leads to 
	$$\xh =  (\BL - \BL \mM^T \CL \mM \BL)\mM^T\vy+  \BL \mM^T \CL \vy.  $$
Noticing that the first term is equal to zero, which yields the desired outcome. 
\end{proof}

In our proposed algorithms we rely on a partition of $\RR^n$ into two parts using the following orthogonal matrix
\begin{equation}\label{aeq:Q}
\mQ = \MMM{cc}{\QM & \QO} \in \RR^{n\times n},
\end{equation} 
where $\QM \in \RR^{n\times m}$ is an orthogonal basis for the row span of $\mM$, and $\QO\in \RR^{n \times (n-m)}$ spans the subspace orthogonal to $\mQ$ (i.e., spans the null-space of $\mM$). The matrix $\QM$ is calculated using the SVD decomposition of $\mM$. The definition of $\mQ$ leads to the following lemma that relates it to the projection onto $\mO$.       
\begin{lemma}
	\label{alem:MSE}
	Let $\delta^\Omega_{|\sLam|}<1$ and define $\LL = \QO^T\OL^T\in \RR^{(n-m)\times |\sLam|}$.\footnote{\rg{Following our notation, in the analysis case $\sLam$ subscript denotes a subset of rows. Yet, $\LL$ is an exception; in this case it denotes a subset of columns.}} Then the minimzer of \eqref{eq:P1} satisfies 
\begin{eqnarray}
\label{eq:OL_xh_residual_norm}
\norms{\OL\xh}  = \vy^T(\CL-\mI)\vy = \norms{R_{\LL} \OL \mM^\dagger \vy },
\end{eqnarray} 
where $R_{\LL} = \mI - P_{\LL}$.
\end{lemma}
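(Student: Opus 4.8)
The plan is to prove the two equalities separately, anchoring both to the common quantity $\norms{\OL\xh}$. \textbf{For the first equality} $\norms{\OL\xh}=\vy^T(\CL-\mI)\vy$ I would argue purely algebraically from the closed form $\xh=\BL\mM^T\CL\vy$ in \eqref{eq:axh}. Since $\OL^T\OL=\mB_\sLam-\mM^T\mM$ by the definition \eqref{eq:B_lambda}, I can split
\[ \norms{\OL\xh}=\xh^T\mB_\sLam\xh-\norms{\mM\xh}. \]
The constraint term is immediate: plugging \eqref{eq:axh} in and using $\mM\BL\mM^T=\CL^{-1}$ gives $\mM\xh=\vy$, so $\norms{\mM\xh}=\vy^T\vy$. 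For the first term, the symmetry of $\BL=\mB_\sLam^{-1}$ and of $\CL$ lets me transpose to $\xh^T=\vy^T\CL\mM\BL$, and then $\BL\mB_\sLam\BL=\BL$ followed by $\CL(\mM\BL\mM^T)\CL=\CL\CL^{-1}\CL=\CL$ collapse $\xh^T\mB_\sLam\xh$ to $\vy^T\CL\vy$. Subtracting yields $\vy^T(\CL-\mI)\vy$.

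\textbf{For the second equality} I would pass to the orthogonal decomposition $\mQ=[\QM\ \QO]$ of \eqref{aeq:Q} and write any candidate as $\vx=\QM\vu+\QO\vec{t}$. Because the columns of $\QO$ span the null space of $\mM$, the feasibility constraint $\mM\vx=\vy$ in \eqref{eq:P1} fixes the row-span component, $\QM\vu=\mM^\dagger\vy$, independently of $\vec{t}$, while $\vec{t}\in\RR^{n-m}$ remains free. Substituting into the objective and using $\OL\QO=\LL^T$ gives
\[ \norms{\OL\vx}=\norms{\OL\mM^\dagger\vy+\LL^T\vec{t}}. \]
Minimizing this over $\vec{t}$ is a linear least-squares problem whose residual is the component of $\OL\mM^\dagger\vy$ orthogonal to the row span of $\LL$; since $\xh$ from Lemma~\ref{lem:P1_xh} is precisely the minimizer of \eqref{eq:P1}, its objective value equals this residual, namely $\norms{R_{\LL}\OL\mM^\dagger\vy}$ with $R_{\LL}=\mI-P_{\LL}$ and $P_{\LL}$ the projection onto the row span of $\LL$. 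Chaining the two parts then gives the full statement.

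\textbf{The main obstacle} I anticipate is bookkeeping of dimensions and conventions rather than any deep difficulty. One must be careful that $\LL=\QO^T\OL^T\in\RR^{(n-m)\times|\sLam|}$ enters the objective through $\LL^T\vec{t}$, so that $R_{\LL}$ in fact acts in $\RR^{|\sLam|}$ and projects onto the \emph{orthogonal complement of the row span} of $\LL$ --- consistent with the paper's stated convention that analysis-model projections are taken onto the row span, and making the right-hand side well typed. The other point needing care is the clean identification $\OL\QM\vu=\OL\mM^\dagger\vy$, which hinges on $\QM\vu$ being the minimum-norm solution of $\mM\vx=\vy$ (the component of any feasible point in the column space of $\QM$); this, together with the invertibility of $\mB_\sLam$ from Lemma~\ref{lem:B_S} guaranteed by $\delta^\Omega_{|\sLam|}<1$, is what makes every object in the chain well defined.
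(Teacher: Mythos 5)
Your proof is correct. The first equality is established essentially as in the paper: both you and the authors start from the closed form $\xh=\BL\mM^T\CL\vy$ of Lemma~\ref{lem:P1_xh}, substitute $\OL^T\OL=\mB_\sLam-\mM^T\mM$, and collapse the resulting quadratic form to $\vy^T(\CL-\mI)\vy$. For the second equality, however, you take a genuinely different route. The paper works at the operator level: it writes the SVD $\mM=\matrx{U}\mS\QM^T$, expresses $\CL^{-1}=\mM(\mM^T\mM+\OL^T\OL)^{-1}\mM^T$ in the $\mQ$-basis, applies the block-matrix inversion formula, and grinds out the identity $\CL=\mI+\mM^{T\dagger}\OL^TR_{\LL}\OL\mM^\dagger$, from which the claim follows by sandwiching with $\vy$. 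You instead argue variationally: parametrizing the feasible set of \eqref{eq:P1} as $\mM^\dagger\vy+\QO\vec{t}$ turns the constrained problem into the unconstrained least squares $\min_{\vec{t}}\norms{\OL\mM^\dagger\vy+\LL^T\vec{t}}$, whose optimal value is by definition the squared norm of the residual $R_{\LL}\OL\mM^\dagger\vy$; chaining with the first equality (or with the fact that $\xh$ attains the minimum) gives the result. Your argument is shorter and more transparent --- it explains \emph{why} the residual formula holds rather than verifying it --- while the paper's computation yields the full matrix identity for $\CL$, not just the quadratic form at $\vy$ (though since $\mM$ has full row rank the two are equivalent by polarization). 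The only points to make explicit are the ones you already flag: $\LL$ has full row rank (the paper derives this from Lemma~\ref{lem:B_S} in a footnote), so $P_{\LL}=\LL^T(\LL\LL^T)^{-1}\LL$ is well defined, and $\mM\QM$ is invertible so that feasibility pins down the row-span component to $\mM^\dagger\vy$.
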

\begin{proof} 
	According to Lemma \ref{lem:P1_xh} we have 
	$$\norms{\OL\xh} = \vy^T \CL \mM \BL  \OL^T\OL \BL \mM^T \CL \vy   .$$
	Plugging $\OL^T\OL = \mB_\sLam - \mM^T\mM$ (see \eqref{eq:B_lambda}) into the above expression, we get  
\begin{eqnarray}
\label{eq:OL_xh_eq_vyT_CK_ml_vy}
\norms{\OL\xh}  = \vy^T(\CL-\mI)\vy.
\end{eqnarray}	
	This is the first equality in the Lemma. The second equality is proven in Appendix~\ref{sec:eq:OL_xh_residual_norm_proof}. 
\end{proof}  
\begin{lemma} \label{alem:errinc}  
	Let $\sLam$ be a set of indices, and $i\notin \sLam$ an index of a row in $\mO$. Let $\xh_1$ be the solution to \eqref{eq:P1} with $\sLam$ and $\xh_2$ be the solution to \eqref{eq:P1} with $\sLami$. The change in error is then 
\begin{eqnarray}
\label{eq:analysis_atom_addition}
\norms{\OLi \xh_2} - \norms{\OL\xh_1}   = {(\beta -\gamma )^2\over 1+\alpha},
\end{eqnarray} 
	where $\alpha = \vq^T (\LL\LL^T)^{-1}\vq$, $\vq = \QO^T\oi$, $\beta = \oi^T\mM^\dagger\vy_0$, $\gamma = \vq^T(\LL^\dagger)^T\OL\mM^\dagger \vy_0$.
\end{lemma}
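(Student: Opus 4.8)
The plan is to reduce both error expressions to ordinary (unconstrained) least-squares minima over the null space of $\mM$ and then read off the increment as a rank-one update. Any feasible point of \eqref{eq:P1} can be written as $\xh = \mM^\dagger\vy + \QO\vu$ with $\vu\in\RR^{n-m}$, since $\QO$ spans the null space of $\mM$; moreover $\OL\QO = \LL^T$ by the definition $\LL=\QO^T\OL^T$. Hence I would first write $\OL\xh = \OL\mM^\dagger\vy + \LL^T\vu$, so that minimizing $\norms{\OL\xh}$ over the feasible set becomes the unconstrained problem $\min_{\vu}\norms{\OL\mM^\dagger\vy + \LL^T\vu}$, whose optimal value is exactly the quantity $\norms{R_{\LL}\OL\mM^\dagger\vy}$ of Lemma~\ref{alem:MSE}. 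The invertibility of $\mH\triangleq\LL\LL^T$ needed below follows from $\delta^\Omega_{|\sLam|}<1$: a nonzero null-space vector $\QO\vu$ annihilated by $\OL$ would violate the lower \ORIP bound, exactly as argued in Lemma~\ref{lem:B_S}.

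Next I would expose the effect of adding the index $i$. Writing $\LLi = \left[\,\LL\ \ \vq\,\right]$ with $\vq = \QO^T\oi$, and noting that $\OLi\mM^\dagger\vy$ is $\OL\mM^\dagger\vy$ augmented by the scalar $\beta = \oi^T\mM^\dagger\vy$, the augmented objective splits as
\begin{equation}
\norms{\OLi\mM^\dagger\vy + \LLi^T\vu} = \norms{\OL\mM^\dagger\vy + \LL^T\vu} + (\beta + \vq^T\vu)^2 \nonumber.
\end{equation}
Thus enlarging the cosupport by one index merely appends a single squared residual to the least-squares objective, turning the computation into the classical ``one extra equation'' update.

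I would then compare the two minima in closed form. With $\vb\triangleq\LL\,\OL\mM^\dagger\vy$, the $\sLam$ optimum is $\norms{\OL\mM^\dagger\vy}-\vb^T\mH^{-1}\vb$, attained at $\vu^\star=-\mH^{-1}\vb$. For the augmented problem the Hessian becomes $\mH+\vq\vq^T$ and the linear part becomes $\vb+\beta\vq$, so its optimum is $\norms{\OL\mM^\dagger\vy}+\beta^2-(\vb+\beta\vq)^T(\mH+\vq\vq^T)^{-1}(\vb+\beta\vq)$. Applying Sherman--Morrison to $(\mH+\vq\vq^T)^{-1}$ and subtracting, the only surviving scalars are $\alpha=\vq^T\mH^{-1}\vq$, $u=\vq^T\mH^{-1}\vb$ and $w=\vb^T\mH^{-1}\vb$; after collecting everything over the common denominator $1+\alpha$, the $w$-terms and the mixed $\alpha,u$-terms cancel and one is left with $(\beta-u)^2/(1+\alpha)$. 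Finally I would identify $u=\vq^T(\LL\LL^T)^{-1}\LL\,\OL\mM^\dagger\vy=\vq^T(\LL^\dagger)^T\OL\mM^\dagger\vy=\gamma$, using $\LL^\dagger=\LL^T(\LL\LL^T)^{-1}$ for the full-row-rank $\LL$, which delivers the claimed value $(\beta-\gamma)^2/(1+\alpha)$ (with $\vy=\vy_0$ in the noiseless setting of the statement).

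The main obstacle I anticipate is purely the bookkeeping in the Sherman--Morrison step: one must carry the scalar contractions $\alpha$, $u$, $w$ and the term $\beta^2$ through the subtraction and verify the exact cancellation that leaves the clean square $(\beta-u)^2$. The only conceptual points to pin down are the pseudoinverse identity linking $u$ to $\gamma$ and the invertibility of $\LL\LL^T$ under the \ORIP assumption; both are routine once the reduction of the first paragraph is in place.
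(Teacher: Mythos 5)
Your proposal is correct and follows essentially the same route as the paper: both arguments come down to a Sherman--Morrison rank-one update of $(\LL\LL^T)^{-1}$ applied to the quadratic form in $\LL\,\OL\mM^\dagger\vy+\beta\vq$, your $\vb^T\mH^{-1}\vb$ being exactly the paper's $\norms{P_{\LL}\vvv}$, and your $u=\gamma$ identification via $\LL^\dagger=\LL^T(\LL\LL^T)^{-1}$ matching the lemma's statement. The only real difference is cosmetic but welcome: you rederive the identity $\norms{\OL\xh}=\norms{R_{\LL}\OL\mM^\dagger\vy}$ from the null-space parametrization $\xh=\mM^\dagger\vy+\QO\vu$, whereas the paper imports it from Lemma~\ref{alem:MSE} (whose second equality is proved separately in the appendix by block-matrix inversion), so your reduction doubles as a shorter, self-contained proof of that auxiliary fact.
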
 
\rg{In the lemma $\oi$ is the $i$th row of $\mO$.}
Its proof  appears in Appendix~\ref{sec:alem:errinc_proof}. 
\rg{In the noiseless case ($\vy = \vy_0$), we have that $\beta = \vb_{(i)}$ and $\gamma = \vq^T\vgam = \vgam_{(i)}$ and $\alpha =\valpha_{(i)}$, where $\vb,\vgam$ and $\valpha$ are auxiliary vectors used in algorithms~\ref{tb:GALS} and \ref{tb:GALSR}.}

\begin{lemma} \label{alem:errexc}  
	Set $\sLam$, $\alpha$, $\beta$, $\gamma$ as in Lemma \ref{alem:errinc}, and let $i\in \sLam$ be an index of a row in $\mO$. Let $\xh_1$ be the solution to \eqref{eq:P1} with $\sLam$ and $\xh_2$ be the solution to \eqref{eq:P1} with $\sLammi$. The change in error is then 
\begin{eqnarray}
\label{eq:analysis_atom_deletion}
\norms{\OL \xh_1} - \norms{\OLmi\xh_2}  = {(\beta -\gamma )^2\over 1-\alpha}.
\end{eqnarray}
\end{lemma}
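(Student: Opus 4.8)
The plan is to obtain Lemma~\ref{alem:errexc} from the insertion formula of Lemma~\ref{alem:errinc} by a reversal argument, exactly as Lemma~\ref{lem:intro:bckgrnd:shermor_rem} is derived from Lemma~\ref{lem:intro:bckgrnd:shermor}. Removing $i$ from $\sLam$ is the inverse of inserting $i$ into $\sLammi$, so I would first apply Lemma~\ref{alem:errinc} with base set $\sLammi$ and added index $i$, which gives
\[
\norms{\OL\xh_1}-\norms{\OLmi\xh_2} = \frac{(\beta'-\gamma')^2}{1+\alpha'},
\]
where the primed constants $\alpha',\beta',\gamma'$ are the insertion constants attached to the reduced set $\sLammi$ (i.e. with $\mL_\sLammi$ in place of $\LL$), whereas $\alpha,\beta,\gamma$ in the statement are the constants attached to the current set $\sLam$ that still contains $i$. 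The left-hand side already matches the claim, so the entire proof reduces to re-expressing the right-hand side in terms of the unprimed constants.

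The bridge between the two sets of constants is a single rank-one update. Since the column of $\LL=\QO^T\OL^T$ corresponding to row $i$ is exactly $\vq=\QO^T\oi$, we have $\LL\LL^T=\mL_\sLammi\mL_\sLammi^T+\vq\vq^T$. Applying the Sherman--Morrison identity to $(\LL\LL^T)^{-1}$ and contracting with $\vq$ yields $\alpha=\alpha'/(1+\alpha')$, equivalently $1+\alpha'=1/(1-\alpha)$ and $\alpha'=\alpha/(1-\alpha)$; this simultaneously confirms $\alpha<1$, so the denominator in \eqref{eq:analysis_atom_deletion} is positive. The quantity $\beta=\oi^T\mM^\dagger\vy_0$ references no cosupport at all, so trivially $\beta'=\beta$.

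The only genuine work is relating $\gamma$ to $\gamma'$. I would use that $\delta^\Omega_{|\sLam|}<1$ forces $\LL$ to have full row rank (via Lemma~\ref{lem:B_S}), so that $(\LL^\dagger)^T=(\LL\LL^T)^{-1}\LL$; then set $\vp=\OL\mM^\dagger\vy_0$ and split it into the block $\vp_-$ indexed by $\sLammi$ and its $i$th coordinate, which equals $\beta$, so that $\LL\vp=\mL_\sLammi\vp_-+\beta\vq$. Feeding this decomposition through the same Sherman--Morrison expansion and using $\vq^T(\mL_\sLammi\mL_\sLammi^T)^{-1}\vq=\alpha'$ collapses $\gamma$ to
\[
\gamma=(1-\alpha)\gamma'+\alpha\beta,
\]
whence $\beta-\gamma=(1-\alpha)(\beta'-\gamma')$. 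Substituting $(\beta'-\gamma')^2=(\beta-\gamma)^2/(1-\alpha)^2$ together with $1/(1+\alpha')=1-\alpha$ into the insertion formula turns its right-hand side into $(\beta-\gamma)^2/(1-\alpha)$, which is the assertion.

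I expect this $\gamma$-to-$\gamma'$ reduction to be the main obstacle, since it is the one place where a pseudo-inverse (rather than a plain inverse) is updated: one must first justify $(\LL^\dagger)^T=(\LL\LL^T)^{-1}\LL$ from the full-row-rank property and then track the split of the data vector $\vp$ across the removed coordinate before the Sherman--Morrison cancellation produces the clean factor $1-\alpha$. The $\alpha$ and $\beta$ identities, by contrast, should be immediate. As a consistency check, the noiseless substitutions $\beta=\vb_{(i)}$, $\gamma=\vgam_{(i)}$, $\alpha=\valpha_{(i)}$ noted after Lemma~\ref{alem:errinc} should make \eqref{eq:analysis_atom_deletion} computable from the same auxiliary vectors already maintained by the algorithms, and setting these equal to their insertion counterparts recovers the expected reversal symmetry.
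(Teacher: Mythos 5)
Your proposal is correct, but it takes a genuinely different route from the paper. The paper proves the removal formula the same way it proves Lemma~\ref{alem:errinc}: it redoes the whole Sherman--Morrison chain of norm expansions directly for the downdate $(\LL\LL^T-\vq\vq^T)^{-1}$, i.e.\ a parallel derivation rather than a reduction (the text literally says only ``the proof is similar''). You instead invoke Lemma~\ref{alem:errinc} as a black box with base set $\sLammi$ and added index $i$, and then translate the primed constants into the unprimed ones via a single rank-one update of $(\LL\LL^T)^{-1}$. I checked the three conversion identities and they all hold: $\alpha=\alpha'/(1+\alpha')$ follows from contracting Sherman--Morrison with $\vq$; $\beta'=\beta$ is trivial; and writing $(\LL^\dagger)^T=(\LL\LL^T)^{-1}\LL$ (legitimate by the full-row-rank footnote tied to Lemma~\ref{lem:B_S}) and splitting $\LL\vp=\mL_{\sLammi}\vp_-+\beta\vq$ does give $\gamma=(1-\alpha)\gamma'+\alpha\beta$, hence $\beta-\gamma=(1-\alpha)(\beta'-\gamma')$ and the right-hand side collapses to $(\beta-\gamma)^2/(1-\alpha)$. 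Your approach buys a shorter argument that reuses the eight-step computation of Lemma~\ref{alem:errinc} instead of repeating it, and it yields the bonus fact $\alpha<1$ (positivity of the denominator) for free; the paper's direct route avoids having to relate two parameterizations of the same quantities, which is where all the real work sits in your version. The only caveat, which applies equally to the paper's version since the lemma already presupposes that the minimizer over $\sLammi$ exists, is that applying the insertion lemma to the reduced set implicitly uses $\delta^\Omega_{|\sLam|-1}<1$ so that $\mL_{\sLammi}$ also has full row rank; it is worth stating that explicitly.
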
 
The proof is similar to the proof of Lemma \ref{alem:errinc}.

\subsection{Proposed analysis greedy least squares based  methods} 
\subsubsection{Greedy analysis least squares (GALS)}
GALS operates similarly to GAP with the objective function \eqref{eq:P1} in Definition \ref{df:relaxedanal}.  Staying true to the OLS selection criteria, the atom to be deleted from $\sLam$ at each iteration is the one that will lower the residual the most (we refer to $\norms{\OL\vx}$ as the residual in this context). The selection rule is as defined by Lemma \ref{alem:errexc}, where at each iteration we seek the entry corresponding to an atom in the current cosupport whose entry is maximal. \rg{The stopping criterion is either a target error or a pre-defined cosupport size.
Notice that the GALS technique only removes atoms from the cosupport as the GAP strategy does, but with a least squares criterion. Therefore, as GAP is the analysis version of OMP, GALS may be viewed as the analysis version of OLS. Thus, we would expect to get better performance with it compared to GAP.} 

\rg{The pseudo-code of an efficient implementation of GALS is described in Algorithm \ref{tb:GALS}.
This algorithm starts with a full cosupport and then iteratively removes atoms from it. To find the row to be removed efficiently, our strategy uses the criterion in Lemma~\ref{alem:errexc}. Then it updates the residual efficiently also using the auxiliary variables. }
We postpone the discussion related to the update rules of the different auxiliary variables required by GALS to Section \ref{asec:updag}.
 
\begin{table}[t]	
	\caption{Greedy analysis least squares (GALS)}	
	\label{tb:GALS}	
	\centering	
	\begin{tabular}{l}	
		\toprule	
		Input: operator $\mO$, measurement $\vy$, sensing matrix $\mM$,\\ ~~~~~~~~~~ either cosparsity $l$ or target residual $\epsilon_t$ 	\\
		Output:  $\xh$, and $\sLam$ its cosupport with either $|
		\sLam|=l$ or $\norms{\OL\xh}\leq \epsilon_t$ 	\\
		\midrule
		\textbf{init:}\\	
		\quad $\sLam\leftarrow \{1,\dots,p\}$, $\vb \leftarrow \mO\mM^\dagger \vy$ \\ 
		\quad calculate $\mQ_{\mM^\perp}$ according to \eqref{aeq:Q} \\
%
\quad$\mL \leftarrow \mQ_{\mM^\perp}^T\mO^T$  \quad\quad \textbf{-Auxiliary} \\  
\quad $\mG \leftarrow (\mL\mL^T)^{-1} $, $\vgam \leftarrow \mL^T\mG\mL\vb$, $\valpha \leftarrow \text{diag}(\mL^T\mG\mL)$  \quad \textbf{-Auxiliary} \\
		\quad $\epsilon_0 \leftarrow \norms{\vb} -\norms{\vgam}$   \quad\quad \textbf{-Initial residual} \\
		\textbf{while} $|\sLam|>l$ \textbf{or} $\epsilon_0 > \epsilon_t$ \quad \textbf{-Only one condition is tested}  	\\
		\quad$i \leftarrow \arg \max_{i\in \sLam} |\vb_{(i)} - \vgam_{(i)}|^2 / (1-\valpha_{(i)} )$(***) \textbf{-Find row to remove} 	\\
		\quad$\sLam \leftarrow \sLam\setminus i$ \quad\quad \textbf{-Remove row from cosupport}	\\
		\quad $\{\valpha,\vgam, \mG\}\leftarrow \mathrm{updRemA}(\mO,\mQ_{\mM^\perp},\vb,\mG,\vgam,\valpha,i)$\quad (Alg. \ref{tb:aupd})  \\ 

		\quad $\epsilon_0 \leftarrow \norms{\vb_\sLam} -\norms{\vgam_\sLam}$   \quad \quad\textbf{-Update residual}\\
		\textbf{end while}	\\
		\textbf{return} $\sLam$,  $\xh = \min_{\xh} \norms{\OL{\xh}} \text{ s.t. } \vy = \mM\xh $	\\
		\bottomrule	
	\end{tabular}	
\end{table}	
\subsubsection{Greedy analysis least squares with replacement (GALSR)}
As the analysis-model equivalent to OLSR, we now propose a novel algorithm that is based on GALS that we name
GALS with replacement (GALSR). 
 As a first step it uses GALS to produce a cosupport estimation of size $l$. \rg{Then it iteratively reduces the error by replacing atoms in the support. It first finds the atom that improves the  error the most and adds it, and then it removes the atom that least contributes to the error. The algorithm halts if the error stops decreasing.  }
 
\rg{Our efficient implementation of GALSR is described in Algorithm \ref{tb:GALSR}. 
The atoms addition and removal are performed  according to Lemmas \ref{alem:errinc} and \ref{alem:errexc}, respectively, until convergence occurs. At each of the GALSR iterations, it adds an atom from the complement of the currently selected cosupport by seeking the minimizer of \eqref{eq:analysis_atom_addition} in Lemma \ref{alem:errinc}. Then, the atom that will lower the residual the most as defined in \eqref{eq:analysis_atom_deletion} in Lemma \ref{alem:errexc} is excluded from the cosupport. The algorithm stops when the error stops decreasing. It outputs a reconstruction with a cosupport of a pre-defined size $l$ that is given as an input  to the algorithm. }
\begin{table}[t]	
	\caption{Greedy analysis least squares with replacement (GALSR)}	
	\label{tb:GALSR}	
	\centering	
	\begin{tabular}{l}	
	\toprule	
	Input: operator $\mO$, measurement $\vy$, sensing matrix $\mM$, cosparsity $l$ \\
	Output:  $\xh$, and $\sLam$ its cosupport with $|
	\sLam|=l$	\\
	\midrule
	\textbf{init:}\\	
	\quad $\{\sLam,\valpha,\vgam,\mG,\vb,\epsilon_0\}\leftarrow$GALS($\Omega$,$\vy$,$\mM$,$l$)\\
	\textbf{loop} 	\\
	\quad$j \leftarrow \arg \min_{j\notin \sLam} |\vb_{(i)} - \vgam_{(i)}|^2 / (1+\valpha_{(i)} )$ (**) \textbf{Find row to add}	\\
	\quad$\sLam \leftarrow \sLam+ j$ \quad \textbf{Add row to the cosupport}	\\
		\quad $\{\valpha,\vgam, \mG\}\leftarrow \mathrm{updAddA}(\mO,\mQ_{\mM^\perp},\vb,\mG,\vgam,\valpha,j)$\quad (Alg.  \ref{tb:aupd}) \\ 
	\quad$i \leftarrow \arg \max_{i\in \sLam} |\vb_{(i)} - \vgam_{(i)}|^2 / (1-\valpha_{(i)} )$	 (***) \textbf{-Find row to remove}\\
	\quad$\sLam \leftarrow \sLam\setminus i$   \quad \textbf{Remove row from the cosupport}	\\
		\quad $\{\valpha,\vgam, \mG\}\leftarrow \mathrm{updRemA}(\mO,\mQ_{\mM^\perp},\vb,\mG,\vgam,\valpha,i)$\quad (Alg.  \ref{tb:aupd}) \\ 
	\quad \textbf{exit loop if } $\epsilon_0 \leq  \norms{\vb_\sLam} -\norms{\vgam_\sLam}$   \textbf{-Check if residual decreased}  \\
	\quad $\epsilon_0 \leftarrow \norms{\vb_\sLam} -\norms{\vgam_\sLam}$ \quad \quad \textbf{-Update residual} \\
	\textbf{end loop}	\\
	\textbf{return} $\sLam$,  $\xh = \min_{\xh} \norms{\OL{\xh}} \text{ s.t. } \vy = \mM\xh $	\\
	\bottomrule	
\end{tabular}	
\end{table}	
\subsubsection{Update routines for the GALS and GALSR auxiliary variables} \label{asec:updag}
Unlike the synthesis case where an addition of two length $n$ vectors was sufficient for FOLS and OLSR, in the analysis case we need to keep more data in memory. 
We use the following auxiliary variables in their calculation: $\mQ_{\mM^\perp}\in\RR^{n\times (n-m)}$, $\mL_\sLam$ and  $\mG$, where $\sLam$ is a given cosupport.
 \begin{itemize}
 \item  $\mQ_{\mM^\perp}\in\RR^{n\times (n-m)}$  is the orthogonal complement to the range of $\mM$. Refer to \eqref{aeq:Q} for its mathematical formulation. It can be calculated by performing a QR decomposition or SVD of $\mM$, among else.
 \item $\mL_\sLam= \mQ_{\mM^\perp}^T\mO^T_\sLam\in \RR^{(n-m)\times p}$ is as in Lemma~\ref{alem:MSE}. $\mL_{\sLam}$ is used to compute $\mG$ (defined next).
\rg{It needs to be calculated explicitly only in the GALS initialization phase, where $ \mL_\sLam = \mL $ since $\sLam =  \{1,\dots,p\}$ and $\OL = \mO$.  Note that in the rest of the algorithm we do not  calculate $\mL_\sLam$ directly. }
 \item $\mG = (\LL\LL^T)^{-1}\in \RR^{(n-m)\times (n-m)}$  is \rg{updated in Algorithm \ref{tb:aupd} by the matrix inversion lemma when $\sLam$ changes.} 
 \end{itemize}

\rg{Notice that we also use the routines ``updAddA'' and ``updRemA'' in Algorithm~\ref{tb:aupd} that update the auxiliary variables used in  the GALS and GALSR methods upon atom addition and deletion. These procedures use three additional auxiliary variables $\vb,\vgam$ and $\valpha$, which are used for calculating efficiently the updated error (as in Lemma \ref{alem:errinc} and \ref{alem:errexc}).}
\begin{table*}[t]		
	\caption{Update procedures for the auxiliary variables in GALS and GALSR}		                     
	\label{tb:aupd}		
	\centering		
	\begin{tabular}{ll}		
		\toprule
	\begin{minipage}{0.4\textwidth}
			$\mathrm{updAddA}(\mO,\mQ_{\mM^\perp},\vb,\mG,\vgam,\valpha,j)$
			\begin{addmargin*}{0.3cm}
				$\v \leftarrow \mQ_{\mM^\perp}\mG\mQ_{\mM^\perp}^T \oj$\\
				$\mG \leftarrow \mG - {1\over 1+\valpha(j) }\v\v^T$\\
				$\vgam \leftarrow \vgam + {\vb(j) - \vgam(j)\over 1+\valpha(j) }\mO\v$\\  
				$\valpha \leftarrow \valpha - {1\over 1+\valpha(j) }\mO\v\odot\mO\v$  \\
				\textbf{return} $\valpha,\vgam,\mG$
			\end{addmargin*}

	\end{minipage}
	& 
	\begin{minipage}{0.4\textwidth}
	$\mathrm{updRemA}(\mO,\mQ_{\mM^\perp},\vb,\mG,\vgam,\valpha,i)$
				\begin{addmargin*}{0.3cm}
	$\v \leftarrow \mQ_{\mM^\perp}\mG\mQ_{\mM^\perp}^T \oi$\\
	$\mG \leftarrow \mG + {1\over 1-\valpha(i) }\v\v^T$\\
	$\vgam \leftarrow \vgam - {\vb(i) - \vgam(i)\over 1-\valpha(i) }\mO\v$\\  
	$\valpha \leftarrow \valpha + {1\over 1-\valpha(i) }\mO\v\odot\mO\v$ \\
	\textbf{return} $\valpha,\vgam,\mG$
				\end{addmargin*}
\end{minipage}
	\\
		\bottomrule		
		\tiny	*\quad '$\odot$' designates element-wise multiplicaton 
	\end{tabular}		
\end{table*}	
\subsubsection{Properties and complexity of GALS and GALSR} 
\label{asec:complexity}
Once the initialization phase is finished, the most computationally expensive stage is the calculation of $\mO\v$ in Algorithm \ref{tb:aupd}, which costs $pn$ flops. This is in par with the complexity required by GAP, but the algorithms we propose alleviate the need to calculate $\xh$ at each stage explicitly, which might save some running time. On the initialization stage, the inversion of a $n-m\times n-m $ size matrix to create $\mG$ is the most expensive stage, comparable to the complexity of calculating the initial least squares estimate in GAP. The second time consuming step is the calculation of $\mQ_{\mM^\perp}$ that can be calculated using the QR decomposition of $\mM$. This has a complexity of $\OOO(mn^2)$. In case that the same $\mM$ and $\mO$ are used for more than one measurement, these calculations can be done only once. This may save a considerable execution time. 

\subsubsection{Accelerating computation for special cases}
In most scenarios where the dimension of the data is large, there are fast ways to calculate $\mM\vx$ and $\mO\vx$ instead of expensive matrix vector multiplication. Several well known examples where such "fast-multiply" exists are the Fourier, Haar, wavelet, and 2D difference transforms, making their use as analysis and or sampling operators appealing for high-dimensional data. Efficient multiplication schemes can be used to accelerate the calculation of matrix inverses (where the matrix is defined as the successive application of an operator and its adjoint). In this section, we demonstrate one such acceleration for the case where $\mM$ is a sub-sampled 2D Fourier transform and $\mO$ is the 2D \emph{circular} difference operator. To this end denote by $\sLam_{\mM}$ the indices of the 2D-FFT used in the sampling, i.e. $\mM\vx = \mathrm{FFT}_{\sLam_{\mM}}(x)$. In this case (and other cases where $\mM$ is orthogonal), calculating $\mQ_{\mM^\perp}\vx$ amounts to calculating the $\mathrm{FFT}$ of $\vx$ and taking the complement of $\sLam_{\mM}$. 

The acceleration we propose in this case is as follows. Let $\vx$ be a $s\times s$ image, $n=s^2$, $m=|\sLam_{\mM}|$, and $\mF$ be the 1D-DFT matrix of size $s\times s$. Then $\mathrm{FFT}(x) = (\mF\otimes\mF)\vx$ where $\otimes$ denotes the Kronecker product. For $\mD$, the 1D circular difference operator, $\mD(i,i-1:i) = [1,-1],\ \forall i\in\{1\dots s-1\}$, $\mD(0,0) = -1$, $\mD(0,s-1)=1$. Thus, 
\begin{equation}
\label{eq:5678} 
\mM = (\mF\otimes\mF)_{\sLam_{\mM}}\ ; \ \ \ \mO = \left[ \begin{array}{c}\mD \otimes \mI \\  \mI \otimes \mD \end{array} \right].
\end{equation} 
The calculation of the first operation in both $\mathrm{updAddA}$ and $\mathrm{updRemA}$, which is the most time-consuming one, becomes: 
				$$\mQ_{\mM^\perp}\mG\mQ_{\mM^\perp}^T \oj \ \ \Rightarrow \ \ \mathrm{FFT}_{\sLam_{\mM}^c}(\mG\cdot \mathrm{IFFT}_{\sLam_{\mM}^c}(\vw_{(j)})),$$
which can be calculated in a fast manner. 
We remain with the need to calculate $\mG\v$ for some vector $\v$. To this end, note that the rows of the Fourier matrix are the eigen-vectors of the second derivative operator $\mD^T\mD$. More specifically, the matrix $\matrx{T}=\mF \mD^T\mD\mF^T$ is  diagonal with $\text{diag}(\matrx{T})=-4\sin({\pi\over s}0:s-1 )$. 
Now, by recalling that
$\mG_\sLam^{-1} = \mL_\sLam\mL_\sLam^T=\mQ_{\mM^\perp}^T\mO_\sLam^T\mO_\sLam\mQ_{\mM^\perp}$, for $\mG_0 = \mG_{\sLam = \{1..p\}}$, we get: 
\begin{align*}
\mG^{-1}_0 &=  \mI_{\sLam_{\mM}^c}\mF\otimes\mF(\mD^T\mD\otimes\mI + \mI \otimes \mD^T\mD ) \mF^T \otimes \mF^T  \mI_{\sLam_{\mM}^c}^T\\
&=  \mI_{\sLam_{\mM}^c} (\mF\mD^T\mD\mF^T\otimes \mI+ \mI\otimes\mF\mD^T\mD\mF^T)  \mI_{\sLam_{\mM}^c}^T\\
&=\mI_{\sLam_{\mM}^c} (\matrx{T}\otimes\mI+\mI\otimes\matrx{T})\mI_{\sLam_{\mM}^c}^T
\end{align*}
which is also diagonal. Denoting $\vec{t} = \text{diag}(\mG_0)$, using $\mO_\sLam^T\mO_\sLam = \mO^T\mO-\mO_{\sLam^c}^T\mO_{\sLam^c}$ and applying the Woodbury matrix identity, leads to the following relationship for some vector $\v$:
$$ \mG_\sLam\v = \vec{t}\odot\mathrm{FFT}_{\sLam_{\mM}^c}\left({\mO_{\sLam^c}^T\tilde\mG_{\sLam^c}}\mO_{\sLam^c}\cdot\mathrm{IFFT}_{\sLam_{\mM}^c}(\vec{t}\odot\v)\right)+\vec{t}\odot\v, $$
where $\tilde\mG_{\sLam^c}$ is a $k\times k$ matrix.  
This results in a much cheaper operation as multiplying $\mO_{\sLam^c}$ is only $k\times n$ flops.

\subsubsection{Extension to noisy measurements}
In the preceding sections we have introduced a new approach to the analysis model signal reconstruction based on the noiseless objective function in Definition \ref{df:relaxedanal}. In many practical applications, a noise is present and taking the noise into consideration in the program is paramount for practical applications. To this end, GAP with noise (GAPn) has been introduced in \cite{nam2011recovery}. This technique replaces the $\vy= \mM\xh$ constraint in \eqref{eq:P1} in Definition~\ref{df:relaxedanal} with  $\norms{\vy-\mM\xh}\leq \epsilon_{\vw}$, where $\epsilon_{\vw}$ is the energy of the noise vector $\vw$ in \eqref{eq:first}. 
The value $\epsilon_{\vw}$ in GAPn should be set depending on the problem at hand. During the experiments, we saw ambiguous results where in some cases setting $\epsilon_{\vw}$ proportional to the noise power has been the option with the better reconstruction accuracy, while in other cases setting it to a small constant has led to a better results. 
         
We use a similar approach to GAPn to extend our strategies to the noisy case. To retain a reasonable run time we propose to use an alternating minimization scheme, where we introduce a vector $\wh$ that is calculated in the following way. Let $\sLam$ be the selected cosupport. Then  
\begin{equation} \label{aeq:nw}
\wh = \argmin_{\|\vw\| = \epsilon_{\vw}} \norms{R_{\LL} \OL \mM^\dagger (\vy-\vw)},
\end{equation} 
where we get \eqref{aeq:nw} by extending \eqref{eq:OL_xh_residual_norm} in Lemma \ref{alem:MSE} to the noisy case (adding the perturbation $\vw$ that represents the noise). 

To use the proposed scheme in GALS and GLASR we replace the rows marked with either (**) or (***) in algorithms \ref{tb:GALS} and \ref{tb:GALSR} with the procedure in Algorithm \ref{tb:aupdn}, and use the augmented vectors $\tilde{\vgam}$ and $\tilde{\vb}$ in the selection step (denoted by either `$\arg\min$' or `$\arg\max$'). 

There are several ways to solve the minimization problem in \eqref{aeq:nw}, which is part of Algorithm \ref{tb:aupdn}. We provide three methods hereafter and leave the choice of which to choose to the reader. 

The first option is the strategy Nam et al. take in \cite{nam2011recovery}.  They proposed to relax their equivalent of \eqref{aeq:nw} with  $$\min_{\vw} \norms{R_{\LL} \OL \mM^\dagger (\vy-\vw)} + \lambda\norms{\vw},$$
	where a search for a suitable $\lambda$ is performed in each iteration.  
	
	 The second approach is to replace $ \norms{R_{\LL} \OL \mM^\dagger (\vy-\vw)} $ in \eqref{aeq:nw} with its equivalent from Lemma \ref{alem:MSE}, $ (\vy-\vw)^T(\mC-\mI)(\vy-\vw) $. This leads to the following minimization problem 
\begin{eqnarray}
\label{eq:wh_minmization_vw_epsilon}
\wh = \argmin_{\|\vw\| = \epsilon_{\vw}} (\vy-\vw)^T(\mC-\mI)(\vy-\vw). 
\end{eqnarray}
Using Lagrange multipliers yields 
	 $$\wh = (\mC - (1-\lambda)\mI)^{-1}(\mC-\mI)\vy, $$
	 where $\lambda$ should be determined such that $\|\wh\|=\epsilon_{\vw}$.
	 Storing the $m$ eigenvalues and eigenvectors of $\mC-\mI$ in memory and updating them when an atom is added or removed from $\sLam$, mitigates the need to solve multiple least-squares problems each time at the cost of tracking the eigenspace of $\mC-\mI$ under rank-1 updates (e.g. by using one of the methods in \cite{mitz2017symmetric} and references therein).
	 
The third applicable approach is finding the value of $\lambda$ that correponds to solving \eqref{eq:wh_minmization_vw_epsilon} analytically \cite{gander1989constrained}. 
	 This can be done by determining the minimal eigenvalue of the following quadratic eigenvalue problem 
	 \begin{align*}
	 \hat{\lambda} = \arg\min_\lambda \left\{ \mH^2 - {1\over \epsilon_{\vw}^2}\mH\vy\vy^T\mH -2\lambda \mH +\lambda^2\mI \right\},
	 \end{align*}
	 where $\mH = \mC - \mI$ and $\ve$ is the resulting eigenvector. 
	   A solution of this optimization can be obtained, e.g., by solving
	   \begin{align*}
\hat{\lambda} =& \argmin \lambda \ \ \text{ s.t.}\\ &\left(\lambda \mI + \MMM{cc}{-2\mH & \mH^2 - {1\over \epsilon_{\vw}^2}\mH\vy\vy^T\mH \\ \mI & \Zer}\right)\MMM{c}{\lambda\ve\\\ve} =0.
	   \end{align*}

\begin{table}[t]		
	\caption{Update procedure for GALS and GALSR in the presence of noise }		
	\label{tb:aupdn}		
	\centering		
	\begin{tabular}{l}		
		\toprule
		\begin{minipage}{0.4\textwidth}
				\textbf{solve }$\wh = \argmin_{\|\vw\| = \epsilon_{\vw} } \|\vb_\sLam - \vgam_\sLam - R_{\LL} \OL \mM^\dagger \vw \|$\\ 
				 $\tilde\vb = \vb - \mO\mM^\dagger\wh $\\
				 $\tilde\vgam = \vgam - P_{\mL_\sLam}\mO\mM^\dagger\wh $\\
				 for (**) perform: $j \leftarrow \arg \min_{j\notin \sLam} |\tilde\vb_{(i)} - \tilde\vgam_{(i)}|^2 / (1+\valpha_{(i)} )$	\\
				 for (***) perform: $i \leftarrow \arg \max_{i\in \sLam} |\tilde\vb_{(i)} - \tilde\vgam_{(i)}|^2 / (1-\valpha_{(i)} )$
			
		\end{minipage}

		\\
		\bottomrule		
	\end{tabular}		
\end{table}	


\subsection{Analysis model numerical experiments} 

We turn to demonstrate empirically the performance of GALS and GALSR for synthetic signals and various images. 
\subsubsection{Synthetic signals}
\begin{figure}
	\centering
	\includegraphics[width=0.325\linewidth]{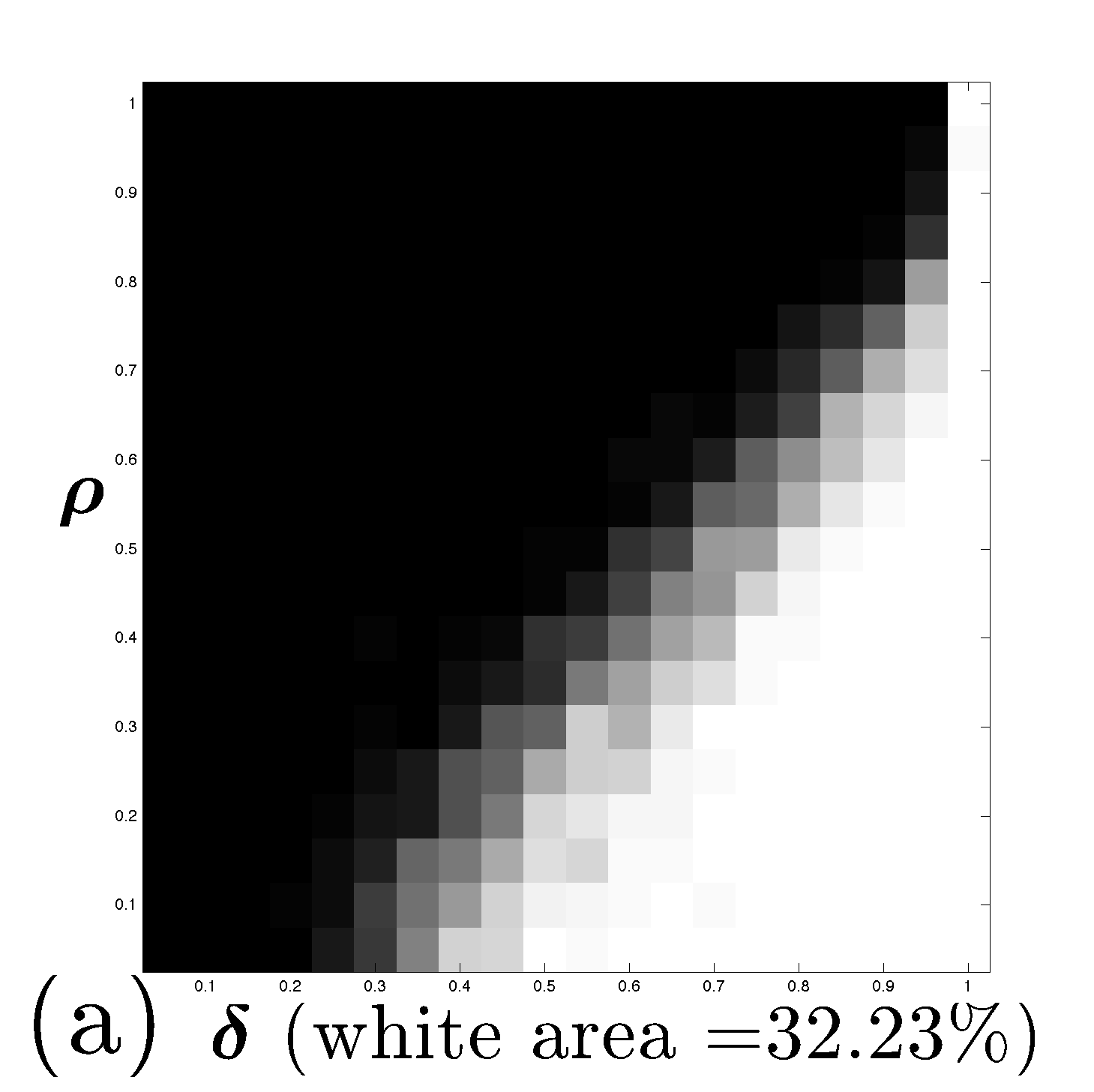}
	\includegraphics[width=0.325\linewidth]{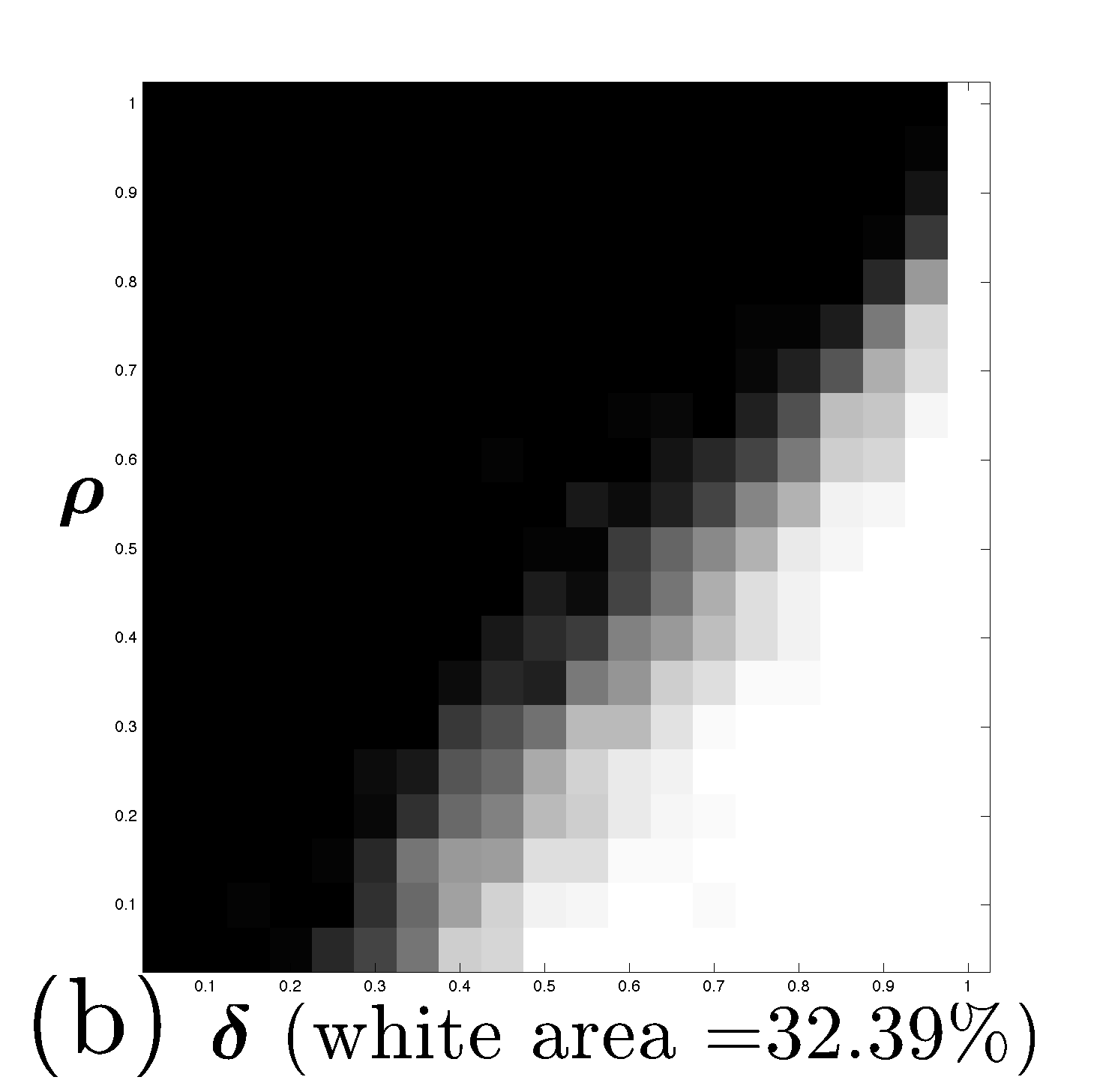}
	\includegraphics[width=0.325\linewidth]{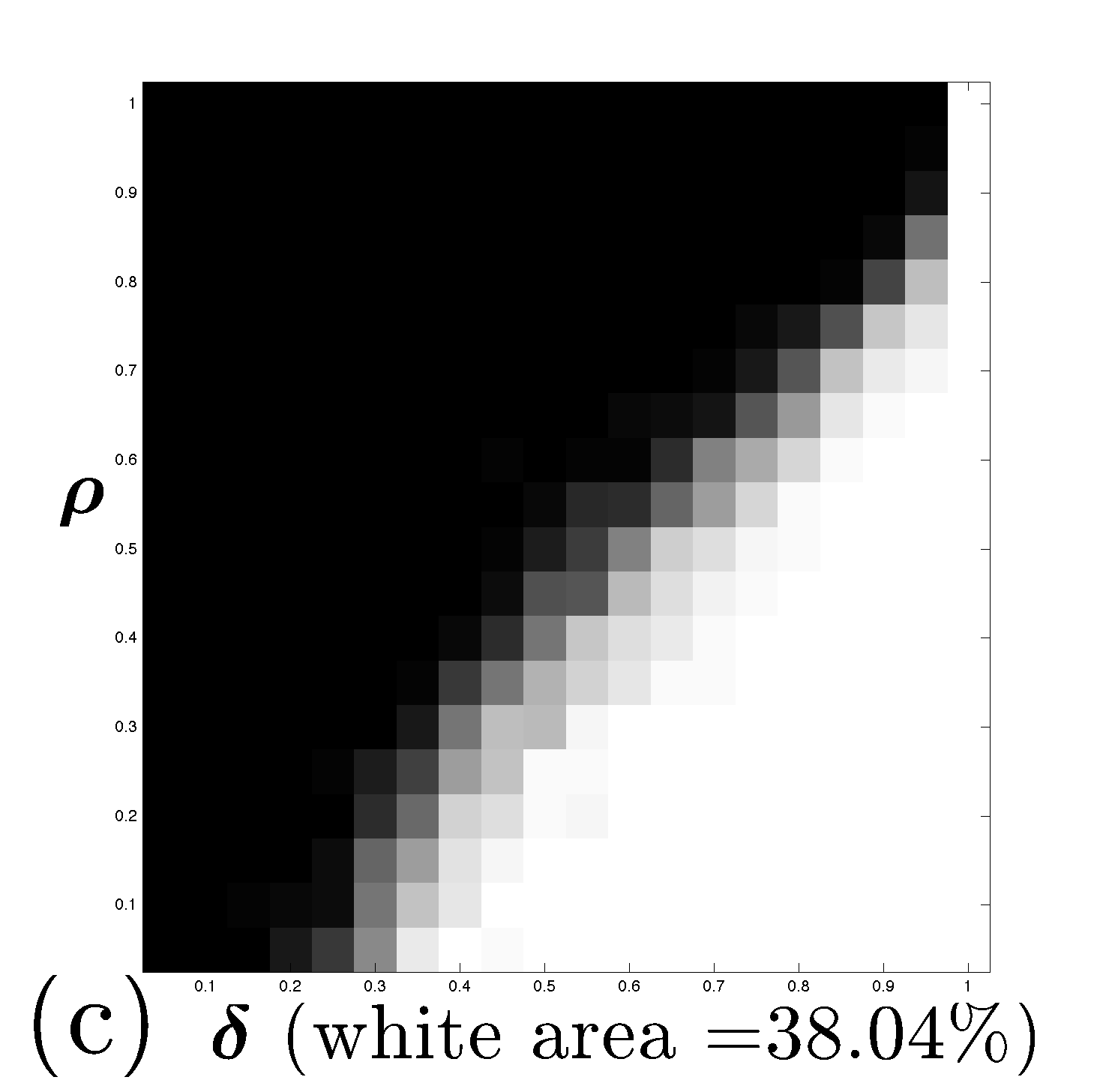}
	\caption[Analysis Experiments]{
		Phase transition of (a) GAP; (b) GALS; and (c) GALSR. 
	}
	\label{fig:aphase}
\end{figure}
 Following the experiment in \cite{giryes2014greedy}, we generate  $\mM$ to be with random Gaussian i.i.d. entries and $\mO$ to be a random Gaussian tight frame with $d = 120$ and $p = 144$. Figure \ref{fig:aphase} presents the phase transition diagram \cite{donoho2009counting} of GAP, GALS and GALSR\footnote{We refer the reader to \cite{giryes2014greedy} for a comparison to other methods in the same experiment. We have chosen GAP as it has given there the best results.}. The experiment is repeated 50 times for each pair of $m,l$. The gray level in each cell corresponds to the amount of times perfect reconstruction was achieved, i.e. white cells correspond to perfect reconstruction in all tests, and black cells correspond to $0\%$ success. It can be seen that GALSR reaches the largest white area in the phase diagram. 
\subsubsection{Shepp-Logan phantom reconstruction}
The second test is reconstructing the Shepp-Logan phantom. The results are presented in Fig. \ref{fig:ashepp}. The sampling operator is a sub-sampled two dimensional Fourier transform that measures only a certain number of radial lines in the Fourier domain of the image, and the analysis operator is the 2D difference operator. The stopping criteria for all algorithms is set to the actual cosparsity of the image under this operator ($l$ = 128014 for a 256$\times$256 size image). Fig.~\ref{fig:ashepp} (a) presents the original Shepp-Logan image; (b) shows the PSNR as a function of the amount of radial lines used in the image reconstruction in the noiseless case. In (c) AWGN is added to $\vy$ with varying energy ($x-$axis) and the PSNR is averaged over 10 realizations of the noise. For this part we use a smaller image size $64\times 64$ with 10 radial lines. Figures~\ref{fig:ashepp}(d)-(f) show the restoration results of GALS, GALSR, and GAP respectively from 30 radial Fourier lines of the 256$\times$256 Phantom image corrupted with AWGN with std of 3\% ($\|\vw\|/\|\vy_{0}\|$). The PSNR for all methods was slightly larger than 40db with a small advantage for GALSR.


\begin{figure}
	\centering
	\includegraphics[width=0.325\linewidth]{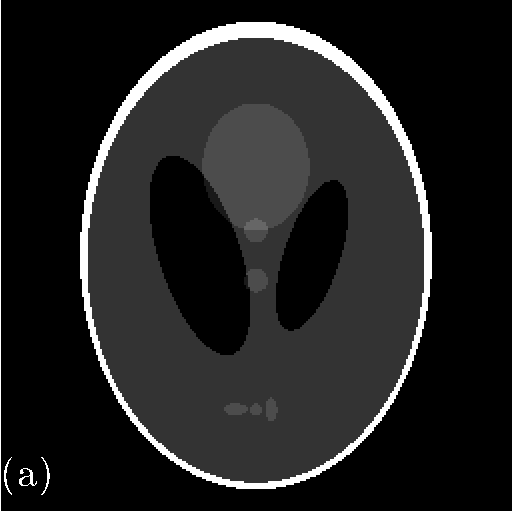}
	\includegraphics[width=0.325\linewidth]{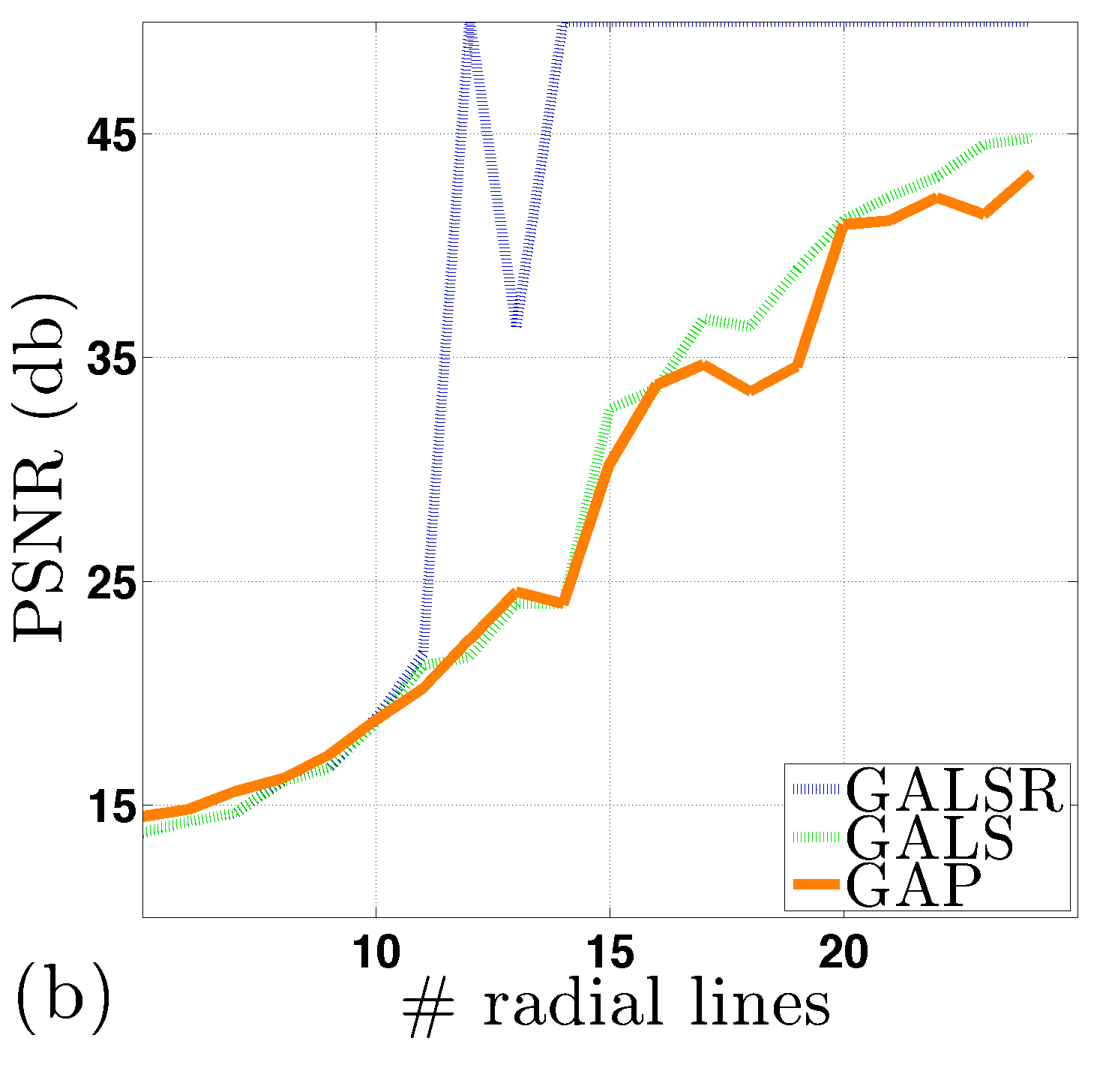}
	\includegraphics[width=0.325\linewidth]{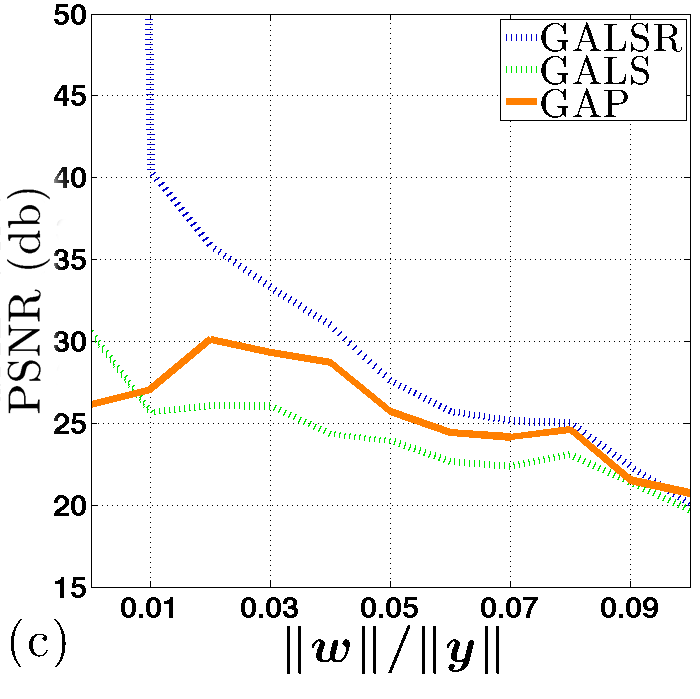}
	
	\includegraphics[width=\linewidth]{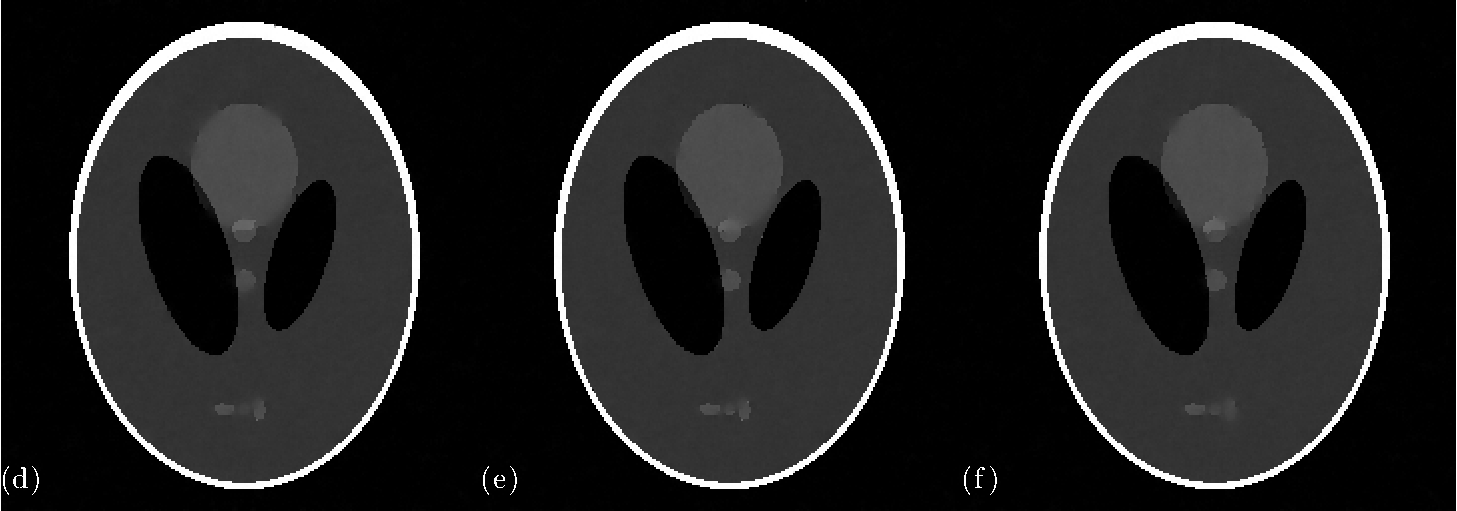}
	\captionsetup{singlelinecheck=off}
	\caption[Analysis Experiments]{Shepp-Logan reconstruction from a set of sub-sampled Fourier radial lines: (a) Input image of size $256\times 256$; (b) PSNR as a function of the number of radial lines used to sample the Fourier transform of (a) without noise. Results higher than 47db were truncated for display purposes (GLASR achieves perfect reconstruction as soon as the number of radial lines is larger than 14); (c) PSNR as a function of the AWGN level ($\|\vw\|/\|\vy_{0}\|$); (d) GALS reconstruction; (e) GALSR reconstruction; and (f) GAP reconstruction.  The PSNR for reconstructions (d)-(f) is slightly larger than 40db with a small advantage for GALSR.  
	}
	\label{fig:ashepp}
\end{figure}

\subsubsection{MRI Image Reconstruction}
The last experiment is related to the issue of having only an approximate signal model. Namely, the signals we meet in practice are not exactly cosparse. For this experiment, we have chosen (256 $\times$ 256 crop of) a MRI image generated from the FSL MNI152 T1 0.5mm image data \cite{jenkinson2012fsl}. The setting of the problem is similar to the Shepp-Logan case in the previous subsection, and the measurements of the image are obtained along 60 radial lines in the Fourier domain. However, no noise has been added. Fig.~\ref{fig:eee} presents the reconstruction results of GALS and GALSR in this case, which is better than the naive reconstruction of padding with zeros and then applying the inverse transform (25.58 dB). It can be seen that also in this case, where the cosparsity model is inexact, we get a good reconstruction using our methods. 

\begin{figure}
	\centering
	\includegraphics[width=\linewidth]{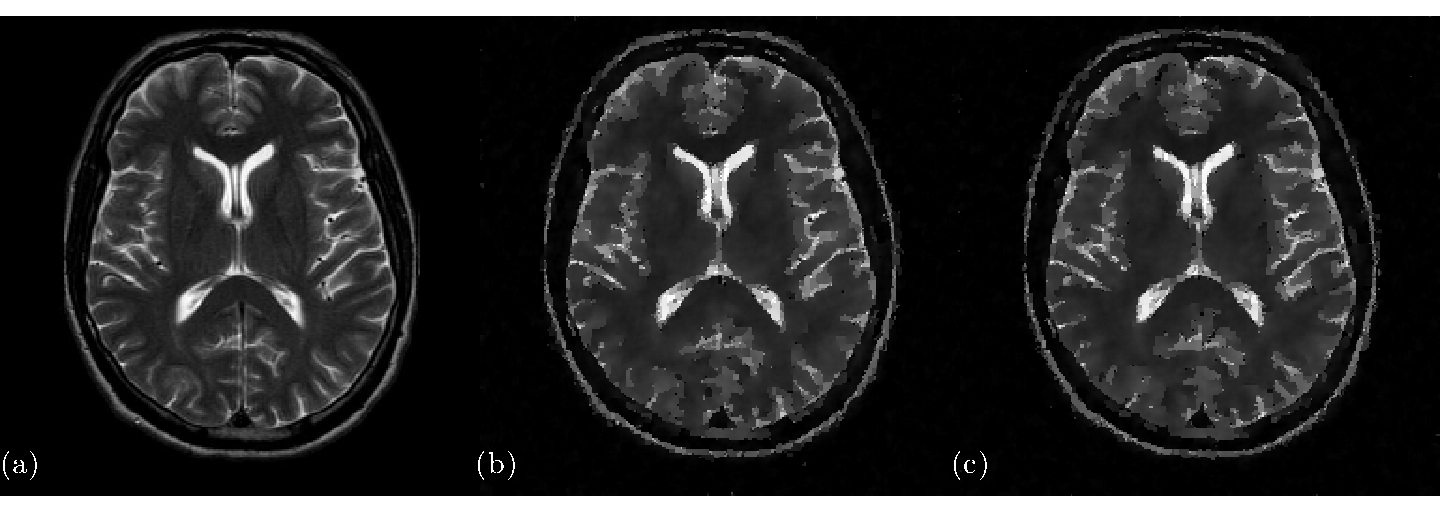}
	\caption[Analysis Experiments]{MRI image reconstruction: (a) original; (b) GALS (PSNR = 31.05 dB); and (c) GALSR (PSNR = 31.65 dB).}
	\label{fig:eee}
\end{figure}

\section{Conclusion} \label{sec:conc} 
In this work, we have presented two new pursuits for sparse recovery under the synthesis model, along with a theoretical study of their properties. We have shown that they provide a good alternative to OMP and OLS with better theoretical guarantees, which are similar (if not better) to the ones of SP and CoSaMP, while not requiring a prior knowledge of the size of the support (in the case of IOLSR). 
Both IOLSR and OLSR are simple to implement and demonstrate little overhead compared to OMP. While in this work, we have used the matrix inversion Lemma to provide an efficient implementation for them, other possibilities such as the QR or Cholesky decomposition exist, which may be also used to improve the computational efficiency \cite{Mairal10Online, Sturm12Comparison}. 

We posit that this kind of fast-converging strategy may 
be helpful in applications suffering from highly coherent dictionaries, as the correlation of an atom to the selected support is built into the algorithms (in the form of the vector $\vrho$) and does not require additional computations. 

We have also introduced two novel techniques for the analysis cosparse model: An OLS like algorithm, called GALS, and a version of it that incorporate replacements of atoms in the support, named GALSR. Both are provided with an efficient implementation. We have shown that in some applications this approach can be favorable compared to other existing strategies.

\section*{Acknowledgment}

\rg{We thank the anonymous reviewers for their valuable comments that significantly improved the presentation and structure of the paper.
This work was supported by the European research council (ERC-StG 757497 PI Giryes).
}

\bibliographystyle{plain}
{\bibliography{paper_refs}}

\appendix
\section{Proofs}
\label{sec:proofs}

\subsection{Proof of Lemma~\ref{lem:intro:bckgrnd:coladderr}}
\begin{proof}[Proof]
By using lemma \ref{lem:intro:bckgrnd:shermor}, we may write
\begin{eqnarray}
\label{eq:col_add:2}
\norms{R_\At\vy} & =& \norms{\vy-P_\At \vy} \\  \nonumber
&  \hspace{-0.5in} \overset{(a)}{=}  & \hspace{-0.3in}  \norms{\vy} - \vy^T\mA\mB\mA^T\vy -{1\over r}\left(\vy^T\Aa \left[ \begin{array}{c}\hat{\ve} \\ -1 \end{array} \right]\right)^2,
\end{eqnarray}
where $(a)$ follows from \eqref{eq:shermor} and some simple algebraic steps.  
Since $\|R_\mA \vy\|^2 = \norms{\vy} - \norms{P_\mA \vy} =\norms{\vy} - \vy^T\mA\mB\mA^T\vy $, we have, 
\begin{eqnarray}
\label{eq:bl23}
\|R_\At \vy\|^2 &=& \|R_\mA \vy\|^2 - {1\over r} \left(\vy^T (\mA\hat{\ve} -\va)\right)^2  \\ \nonumber  &=& \|R_\mA \vy\|^2 - (\vy^T\vv{R_\mA\va})^2.
\end{eqnarray}
Reordering \eqref{eq:bl23} together with the fact that $\|P_\mA \vy \|^2 = \|\vy\|^2 - \|R_\mA\vy\|^2$ leads to \eqref{eq:coll_add:resid}.
		\end{proof}	  
		
\subsection{Proof of Lemma~\ref{lem:intro:bckgrnd:xhatival}}
\begin{proof}Without loss of generality, we prove this formula for the last entry of $\hat{\vx}_{\mA}$ (i.e. $i=k$).  
Let $\tilde{\mA} = \Aa$ be with $k$ columns and consider the $k$-th entry of the expression 
				 $\hat{\vx}_\At = (\At^T\At)^{-1}\At^T\vy = \Bt\At^T\vy$.
				 Using Lemma \ref{lem:intro:bckgrnd:shermor} together with the notation therein we may rewrite $\hat{\vx}_{\tilde{\mA}}$ as (recall $\mA = \At_{\setminus k}$): 
				 \begin{align} \hat{\vx}_\At(k) = {1 \over \|R_{\mA} \va\|^2} ( \va^T - \va^T \mA \mB \mA^T )\vy,  
				 \nonumber
				 \end{align}
				 which equals \eqref{eq:xh_val} for $i=k$.
				 By permuting the entries of $\hat{\vx}$ and $\At$, the same can be proved for all $i \in [1,k]$.
\end{proof}

\subsection{Proof of Lemma~\ref{lem:intro:bckgrnd:leastcol}}
   \begin{proof}
   	From Lemma~\ref{lem:intro:bckgrnd:xhatival}, we have that $$\xh_\mA(i)^2 = {1\over \|R_{\Ai} \va\|^4} \langle \vv{R_\mA\va},\vy\rangle^2. $$
   	Combining this with the expression for $r$ in \eqref{eq:col_rem} that specifies the value of the diagonal of $\Bt$, we get that $ \langle \vv{R_\mA\va},\vy\rangle = \xh(i)^2/\tilde{\mB}(i,i)$. From   
   	Lemma \ref{eq:coll_add:resid} we have $\|R_\mA\vy\|^2 - \|R_\At\vy\|^2 =  \langle \vv{R_\mA\va},\vy\rangle$, which validates the claim.
   \end{proof}

\subsection{Proof of second equality in \eqref{eq:OL_xh_residual_norm} in Lemma~\eqref{alem:MSE}}
\label{sec:eq:OL_xh_residual_norm_proof}
We provide here the proof for the equality
\begin{eqnarray}
\label{eq:4AM}
\vy^T(\CL-\mI)\vy = \norms{R_{\LL} \OL \mM^\dagger \vy },
\end{eqnarray} 
\rg{where $R_{\LL} = \mI - P_{\LL}$ and $\LL = \QO^T\OL^T\in \RR^{(n-m)\times |\sLam|}$.}

\begin{proof}
In this proof, we will use the following identity (where $\sim$ mark cells that are not important for the proof): 
\begin{eqnarray}
\label{eq:4AM1}
\MMM{cc}{A & B \\ C & D}^{-1}  = \MMM{cc}{(A-BD^{-1}C)^{-1} & \sim \\ \sim & \sim }
\end{eqnarray}
Let $\mM = \matrx{U}\mS\QM^T$ be the SVD decomposition of $\mM$ where $\mS\in \RR^{\{m\times m\}}$ is a diagonal matrix with the  singular values of $\mM$, and $\matrx{U}\in\RR^{\{m\times m\}}$ and $\QM\in\RR^{\{n\times m\}}$ are the left and right singular-vector matrices respectively. 
From Lemma~\ref{lem:P1_xh}, $$\mC_\sLam = \left( \mM (\mM^T\mM + \OL^T\OL)^{-1}\mM^T \right)^{-1}.$$
Substituting in it
	$$ \mM^T = \mQ \MMM{c}{\QM^T\mM^T\\ \Zer},\ \ \ \OL^T = \mQ \MMM{c}{\QM^T\OL^T\\ \LL},$$ 
leads to
\begin{align}
 \mC_\sLam^{-1}  & =   \MMM{cc}{\mM\QM & \Zer}  \\ \nonumber & \cdot 
\MMM{cc}{\QM^T(\mM^T\mM + \OL^T\OL)\QM & \QM^T\OL^T\LL^T \\ \LL\OL \QM & \LL\LL^T} ^{-1}   \\ \nonumber & \cdot \MMM{c}{\QM^T\mM^T\\ \Zer}  ,
\end{align}
where $\mQ$ has canceled out due to its orthogonality. \rg{Notice that $\LL$ has full-row rank, which follows from Lemma~\ref{lem:B_S}.}\footnote{\rg{To see this, first note that $\QO$ has full rank and thus $\LL$ is not of full row rank only if $\exists \mathbf{v} \ne 0$ such that $\mathbf{u} =  \QO \mathbf{v}$ and $\OL \mathbf{u} =0$. By the definition of $\QO$, we have that $\mM\mathbf{u} = 0$. Thus, we get that $\mB_\sLam \mathbf{u} =0$ for $\mathbf{u}\ne 0$ in Lemma~\ref{lem:B_S}, which leads to a contradiction.}} 
Now, using 
\eqref{eq:4AM1}, we have
\begin{align*}
 \mC_\sLam^{-1}  & = \MMM{cc}{\mM\QM & \Zer}  \\ \nonumber & \cdot  \MMM{cc}{\left(\QM^T\mM^T\mM\QM + \QM^T\OL^T R_{\LL}\OL\QM \right)^{-1} & \sim \\ \sim & \sim} \\ \nonumber & \cdot \MMM{c}{\QM^T\mM^T\\ \Zer}  \\ \nonumber  & =   \mM\QM \left(\QM^T\mM^T\mM\QM  + \QM^T\OL^T R_{\LL}\OL\QM \right)^{-1}  \\ \nonumber &  ~~ \cdot \QM^T\mM^T, 
\end{align*}
where in the last equation, we have just opened the brackets. Noticing that $\mF \triangleq \mM\QM=\matrx{U}\mS$ is invertible, we have that 
\begin{align*}
\mC_\sLam &= \left(\mF \left(\mF^T\mF  + \QM^T\OL^T R_{\LL}\OL\QM \right)^{-1} \mF^T \right)^{-1}   \\
&= \mF^{-T} \left(\mF^T\mF  + \QM^T\OL^T R_{\LL}\OL\QM \right)\mF^{-1}.
\end{align*}
By opening the brackets, and then using the expression for $F$, we get 
\begin{align*}
\mC_\sLam &=  \mI  + \mF^{-T}\QM^T\OL^T R_{\LL}\OL\QM \mF^{-1}  \text{\ \ \ } \\
&= \mI  + \matrx{U}\mS^{-1} \QM^T\OL^T R_{\LL}\OL\QM\mS^{-1} \matrx{U}^{T}  \text{\ \ \ } \\
&= \mI  + \mM^{T\dagger}\OL^T R_{\LL}\OL\mM^\dagger  \text{\ \ \ },
\end{align*}
where in the last equality, we use the fact that $\mM = \matrx{U}\mS\QM^T$. 
To complete the proof subtract $\mI$ from the resulting expression and multiply by $\vy$ from both sides    
\end{proof} 

\subsection{Proof of Lemma~\ref{alem:errinc}}
\label{sec:alem:errinc_proof}
\begin{proof}
	Let $\alpha$, $\beta$, $\gamma$, and $\vq$ be as in the Lemma, and 
	denote $\GGi=(\LLi\LLi^T)^{-1}$ and $\vvvi = \OLi \mM^\dagger \vy$. 
	We first write the expression for $\norms{P_{\LLi}\vvvi}$ in terms of $\norms{P_{\LL}\vvv}$ and the constants above. By definition $P_{\LLi} = \LLi^T\GGi\LLi$. Thus, 
	\begin{align}
	& \hspace{-0.18in}	\norms{P_{\LLi}\vvvi} = \norms{\LLi^T\GGi\LLi\vvvi}\nonumber\\ 
		& \hspace{-0in} \overset{(a)}{=} \norms{\LLi^T(\GG - {1\over 1+\alpha} \GG\vq\vq^T\GG )(\LL\vvv +\beta\vq)}\nonumber\\
		& \hspace{-0in} \overset{(b)}{=}  \norms{\LLi^T(\GG\LL\vvv+\left(\beta - {\gamma \over 1 + \alpha} -  {\beta \rg{\alpha}\over 1+\alpha}\right)\GG\vq)}\nonumber\\
		& \hspace{-0in} \overset{c}{=} \norms{\LLi^T(\GG\LL\vvv+{\beta - \gamma\over 1+\alpha}\GG\vq)}\nonumber\\
		& \hspace{-0in} \overset{d}{=} \norms{\LL^T(\GG\LL\vvv+{\beta - \gamma\over 1+\alpha}\GG\vq)}\nonumber\\
		&+ \norms{\vq^T(\GG\LL\vvv+{\beta - \gamma\over 1+\alpha}\GG\vq)}\nonumber\\
	& \hspace{-0in}	\overset{}{=} \norms{\LL^T(\GG\LL\vvv+{\beta - \gamma\over 1+\alpha}\GG\vq)} + \left(\gamma + \alpha{\beta - \gamma \over 1 + \alpha}\right)^2\nonumber\\
& \hspace{-0in} \overset{e}{=} \norms{\LL^T(\GG\LL\vvv+{\beta - \gamma\over 1+\alpha}\GG\vq)} +{(\gamma + \beta\alpha)^2\over (1+\alpha)^2}\nonumber\\
	& \hspace{-0in}	\overset{f}{=} \norms{P_{\LL}\vvv} + 2{\beta - \gamma\over 1+\alpha}\gamma + {(\beta - \gamma)^2\over (1+\alpha)^2}\alpha + {(\gamma + \beta\alpha)^2\over (1+\alpha)^2}\nonumber\\
& \hspace{-0in}		\overset{g}{=} \norms{P_{\LL}\vvv} + \beta^2 - {(\beta-\gamma)^2\over 1+\alpha}, \label{aeq:vlue}
	\end{align}
where the following steps were used in the transitions: In step (a) we used the Sherman-Morrison formula on $\GGi = (\LL\LL^T + \vq\vq^T)^{-1}$ and $\LLi\vvvi = \MMM{cc}{\LL & \vq}\MMM{c}{\vvv  \\ \beta}  = \LL\vvv +\beta\vq$. For step (b) multiply the brackets and substitute the values for $\alpha$, $\beta$, and $\gamma$ in the appropriate places. Step (c) follows from scalar algebra. Step (d) can be derived by noticing that $\LLi^T$ has $|\sLam| + 1$ rows and that its squared-norm can be written as the sum of the first $|\sLam|$ rows plus the last row (\rg{namely, $\vq^T$},
assuming, w.l.o.g., that $i$ is the last entry in $\OLi$). 
\rg{In step (e), we use again the definition of $\alpha$ and $\gamma$ and the fact that a norm of a scalar is just its square value.}
Step (f) follows from opening the remaining norm in the expression. Finally step (g) is the result of the following expansion (set the common denominator to be $(1+\alpha)^2$ and look at the nominator):
\begin{align*}
&2\gamma(\beta-\gamma)(1+\alpha) + (\beta-\gamma)^2\alpha + (\gamma+\beta\alpha)^2\\
&\ \ = 2\gamma(\beta + \beta\alpha -\gamma - \gamma\alpha) + \alpha(\beta^2 -2\beta\gamma+\gamma^2)  \\ &\ \ + \gamma^2 +2\alpha\beta\gamma + \beta^2\alpha^2\\
&\ \ = 2\gamma\beta -\gamma^2-\gamma^2 \alpha + \alpha\beta^2 +2\gamma\beta\alpha+\beta^2\alpha^2 \\
&\ \ = (1+\alpha)^2\beta^2 - \beta^2\alpha -\beta^2 +2\gamma\beta - \gamma^2 - \gamma^2\alpha +2\gamma\beta\alpha \\
&\ \ = (1+\alpha)^2\beta^2 - (\beta^2- \gamma^2)(1+\alpha) 
\end{align*}

	Now, to conclude the proof, expand the expression for the error from Lemma \ref{alem:MSE} for $\sLam\cup i$ and combine with \eqref{aeq:vlue}: 
	\begin{align*}
		\norms{\OLi\xh_2}  &=  \norms{R_{\LLi} \OLi \mM^\dagger \vy } \\
		&= \norms{\vvvi } -  \norms{P_{\LLi}\vvvi}\\
		&= \norms{\vvv}+\beta^2 - \norms{P_{\LLi}\vvvi}\\
		&= \norms{\vvv}+ \norms{P_{\LL}\vvv} + {(\beta-\gamma)^2\over 1+\alpha}\\
		&=\norms{\OL\xh_1}  + {(\beta-\gamma)^2\over 1+\alpha}
	\end{align*} 
\end{proof}

\end{document}